\documentclass{article}
\usepackage{amsmath,amsthm,amssymb,mathtools,stmaryrd}
\usepackage[all]{xy}
\usepackage{color}
\input{grcawol.sty}
\input{grcalcx.sty}
\input{grcahbox.sty}

\renewcommand\theta{\vartheta}
\usepackage{verbatim}
\newlength{\itemwidth}
\numberwithin{equation}{section}

\newtheorem{Lem}{Lemma}[section]
\newtheorem{Prop}[Lem]{Proposition}
\newtheorem{Cor}[Lem]{Corollary}
\newtheorem{Thm}[Lem]{Theorem}
\newtheorem{LemDef}[Lem]{Lemma and Definition}
\theoremstyle{definition}
\newtheorem{Def}[Lem]{Definition}
\theoremstyle{remark}
\newtheorem{Rem}[Lem]{Remark}
\newtheorem{Expl}[Lem]{Example}

\renewcommand\o{\otimes}
\newcommand{\ob}{\overline{\bullet}}
\newcommand{\ot}{\otimes}
\DeclareMathOperator\id{\operatorname{id}}
\newcommand\op{{\operatorname{op}}}

\newcommand\ol{\overline}
\newcommand\inv{^{-1}}

\newcommand\Hom{\operatorname{Hom}}

\renewcommand\epsilon\varepsilon

\newcommand\B{\mathcal B}
\newcommand\BB{\overline{B}}

\newcommand{\g}{\mathfrak{g}}
\newcommand{\cG}{\mathcal{G}}
\newcommand{\cH}{\mathcal{H}}
\newcommand{\h}{\mathfrak{h}}
\newcommand\CL{\mathcal L}
\newcommand\cL{\mathcal L}
\newcommand\cI{\mathcal I}
\newcommand\cJ{\mathcal J}
\newcommand\cM{\mathcal M}
\newcommand\cO{\mathcal O}
\newcommand\cU{\mathcal U}
\newcommand\cX{\mathfrak X}

\newcommand\N{\mathbb N}

\def\BHM#1.#2.#3.#4.{{^{#1}_{#3}\mathcal B^{#2}_{#4}}}
\newcommand\rcofix[2]{{#1}{^{\operatorname{co} #2}}}

 \makeatletter
\def\namelabel#1#2{\@bsphack
  \protected@write\@auxout{}%
         {\string\newlabel{#1.nme}{{#2}{#2}}}%
  \@esphack}
\def\nmlabel#1#2{\label{#2}\namelabel{#2}{#1}}

\makeatother

\newcommand{\la}{{\triangleright}}
\newcommand{\ra}{{\triangleleft}}
\newcommand{\bla}{{\blacktriangleright}}

\newcommand{\can}{\textup{can}}
\newcommand\skp[2]{\langle #1|#2\rangle}

\newcommand{\z}{{}_{\scriptscriptstyle{(0)}}}

\newcommand{\on}{{}_{\scriptscriptstyle{(1)}}}

\newcommand{\mo}{{}_{\scriptscriptstyle{(-1)}}}

\renewcommand{\t}{{}_{\scriptscriptstyle{(2)}}}

\renewcommand{\th}{{}_{\scriptscriptstyle{(3)}}}

\newcommand{\fo}{{}_{\scriptscriptstyle{(4)}}}
\newcommand{\fiv}{{}_{\scriptscriptstyle{(5)}}}
\newcommand{\si}{{}_{\scriptscriptstyle{(6)}}}

\newcommand{\tuno}[1]{{#1}{}{}^{\scriptscriptstyle{<1>}}}
\newcommand{\tdue}[1]{{#1}{}{}^{\scriptscriptstyle{<2>}}}

\title{Braided Hopf algebroids and Lie algebroids}
\author{Xiao Han\\
}
\date{}
\begin{document}

\maketitle

\begin{abstract}We construct  braided Hopf algebroids in a braided monoidal category over a field $k$ and study its properties using graphical representation. Then we study Ehresmann-Schauenburg Hopf algebroids assocaited to braided Hopf Galois extensions. 

We introduce braided Lie-Rinehart algebras which generalize the definition in \cite{ALP24, ALP23, ALP25} and study its universal enveloping algebra under symmetric condition. In particular, we introduce Lie-Rinehart algebras (braided Lie-algebroids) associated with braided Hopf algebroids in the category of the module of a triangular Hopf algebra. We then focus on the case for  braided Ehresmann-Schauenburg Hopf algebroids and study their right invariant vector fields, which turn out to be isomorphic to the  $H$-equivariant vector fields on the quantum principal bundle (Hopf Galois extension) as Lie-Rinehart algebras.

Finally, we introduce braided jet Hopf algebroids associate to braided Hopf algebroids in the case that the source and target subalgebra belong to the braided center. Moreover, we show there is a dual pairing between the $k$-order universal enveloping algebra of the braided right invariant vector fields  and the $k$-th jet space of a braided Hopf algebroid and further more a skew pairing between the universal enveloping algebra of its braided right invariant vector fields  and its jet Hopf algebroid under a finiteness condition.
\end{abstract}
\section{Introduction}

Motivated by the geometric picture that given a Lie group, its right invariant vector fields and the tangent space of the unit is a Lie algebra, Woronowicz \cite{SLW} had invented a quantum (braided) Lie algebra associated with a Hopf algebra $H$ in terms of the dual space of its first order differential calculus in the category of Yetter-Drinfeld modules of $H$. Similarly, a Lie-Rinehart algebra of a Lie groupoid can be viewed as right (or left) invariant vector fields $\cX$ which are tangent to the target fiber. Equivalently, it can also be viewed as the restriction of $\cX$  on the unit submanifold which are also sections on the vector bundle of the associated Lie algebroid. However, for the quantization of Lie-Rinehart algebroids, one can not use the same method as Woronowicz \cite{SLW}, because to construct a braided Lie algebroid we need a braiding over a field for the construction of a braided Lie bracket.  However, the Yetter-Drinfeld module of a Hopf algebroid is always over a noncommutative algebra \cite{X. Han23}. Therefore, we study braided Hopf algebroids in a braided category over a field $k$.

There are different definitions for Hopf algebroids \cite{Boehm,schau1}, here we are going to study the braided Hopf algebroids in the sense of \cite{schau1}. Namely, a bialgebroid without an antipode, but admits  bijective Galois and anti-Galois maps. Following Takeuchi's construction \cite{Takeuchi77}, we introduce an analog notation for the braided balanced tensor product and the braided Takeuchi product in order to make the corresponding coproduct an algebra map when co-restricting to the Takeuchi product. Furthermore, we study the properties of Galois map and anti-Galois map by using  graphical computations. After that, we study skew pairing as in \cite{schau1} for the further use for the universal enveloping algebra and the jet Hopf algebroid of a braided Hopf algebroid. In particular, we study braided Hopf Galois extensions and their Ehresmann-Schauenburg Hopf algebroids \cite{schau4} as a generalization of the study on braided Galois objects \cite{schau2}. Also, we show the Ehresmann-Schauenburg Hopf algebroid of a braided Hopf Galois extension is a braided Hopf algebroid with an explicit graphical description that matches the classical result in \cite{HS25}.

Next, we introduce braided Lie-Rinehart algebra in a braided category as a generalization of \cite{ALP24,ALP23, ALP25}. We study their universal enveloping algebra by using a similar construction of \cite{BKS24,ES19}. However, we need the braided category being symmetric in order to make the ideal a coideal for the Hopf algebroid, because only in this case we can construct a well defined braided universal enveloping algebra as a braided Hopf algebroid. Motivated by the geometric picture that the right invariant vector fields on a Lie groupoid form a Lie-Rinehart algebra, we study the right invariant vector fields on quantum groupoids (Hopf algebroids) in the category of the left modules of a triangular Hopf algebra $(K,R)$. Namely, a right invariant vector field is a braided derivative on a braided Hopf algebroid that vanishes on the target subalgebra. Also, we study its composition with the counit, which can be viewed as  the differentiation of the bisection \cite{HLL23,HM23}. Geometrically, it corresponds to its restriction on the unit submanifold of the base. In particular, we focus on the study of the right invariant vector fields of Ehresmann-Schauenburg Hopf algebroid $L(P,H)$ associated to a braided $H$-Galois extension $B\subset P$. We show that the braided  right $H$-equivariant vector fields $\textup{Der}^{H}(P)$ \cite{ALP24,ALP23,ALP25} on the quantum principal bundle is isomorphic to the braided right invariant vector fields $\cX(L(P,H))$ on $L(P,H)$ as Lie-Rinehart algebras. Moreover, we have the short exact sequence of Lie-Rinehart algebras in ${}_{K}\cM$:
  \[\xymatrix{0\ar[r]&\cX_{\textup{ver}}(L(P,H))\ar[r]^-{\iota}\ar[d]_{}&\cX(L(P,H))\ar[r]^{\rho}\ar[d]_{} &\textup{Der}(B)\ar[d]_{=}\ar[r]&0\\
0\ar[r]&\textup{aut}_{B}(P)\ar[r]^-{\iota} &\textup{Der}^{H}(P)\ar[r]^{\rho}&\textup{Der}(B)\ar[r]&0}.\]

Finally, we construct a braided jet Hopf algebroid of a braided Hopf algebroid $\cL$ under the condition that the source and target subalgebra belongs to the braided center of $\cL$ which generalizes \cite{HM25} where  the jet Hopf algebroid of a pair Hopf algebroid $B\ot\BB$ in the commutative case was studied.  Further more, we show the $k$-order differential operators $\cU_{B}(\cX(\cL))_{\leq k}$ generated by the Lie-Rinehart algebra of $\cL$ are $B$-linear maps of the $k$-th jet space $\cJ^{k}(\cL)$ which match the  geometric case. While in \cite{CG}, the $k$-jet space is defined to be the dual space of  $\cU_{B}(\g)_{\leq k}$. Further more, under some finiteness condition, we can show there is a skew pairing between the universal enveloping algebra $\cU_{B}(\cX(\cL))$ and the jet Hopf algebroid of $\cL$.

\subsection*{Acknowledgements}
The author was supported by Leverhulme Trust project grant RPG-2024-177. The author thanks Peter Schauenburg and Shahn Majid for helpful discussions. In particular,  the author thanks Mathieu ~Sti\'enon for offering the geometric idea on the jet bundles of Lie groupoids. Moreover, the author thanks Peter Schauenburg for sharing his Latex  source code for drawing the diagrams.

\section{Preliminaries}\nmlabel{Section}{sec:prelims}
Throughout the paper, $\B$ denotes a strict braided monoidal category over a field $k$ with
equalizers, coequalizers and a braiding $\tau$. We will use
graphical calculus to do computations in $\B$, using the notations
$$\tau_{VW}=\gbeg23\got1V\got1W\gnl\gbr\gnl\gob1W\gob1V\gend\qquad
  \text{ and }\qquad
  \tau_{VW}\inv=\gbeg23\got1W\got1V\gnl\gibr\gnl\gob1V\gob1W\gend$$
for the braiding. On elements we denote the image of the braiding by $\tau_{VW}(v\otimes w)=w_{\alpha}\otimes v^{\alpha}$ (or any other Greek letter) for $v\otimes w\in V\ot W$, and $\tau^{-1}_{VW}(w\ot v)=v^{\ol \alpha}\ot w_{\ol \alpha}$. Hence $v^{\alpha\ol \beta}\ot w_{\alpha \ol \beta}=v\ot w$. We call $\B$ symmetric if $\tau_{WV}=\tau^{-1}_{VW}$ (i.e. $v_{\alpha}\ot w^{\alpha}=v^{\ol \alpha}\ot w_{\ol \alpha}$). We will use
$$\nabla_A=\gbeg 23\got1A\got1A\gnl\gmu\gnl\gob2A\gend,
  \qquad
  \eta_A=\gbeg13\gnl\gu1\gnl\gob1A\gend,
  \qquad
  \mu=\mu_r=\gbeg 23\got1M\got1A\gnl\grm\gnl\gob1M\gend,
  \qquad\text{and}\qquad
  \mu_\ell=\gbeg23\got1A\got1M\gnl\glm\gnl\gvac1\gob1M\gend
$$
for the multiplication and unit of an algebra $A$ in $\B$, the
module structure of a right $A$-module $M\in\B_A$, and the module
structure of a left module $M\in{_A\B}$. We will denote the right (resp. left) action of $A$ on  $M$ by $m\ra a$ or $ma$ (resp. $a\la m$ or $am$).
The opposite algebra of $A$  (denoted by $\ol A$ or $A^{\op}$) is defined on the same underlying object, with the multiplication determined by
$$\gbeg 24
 \got 1{A}\got 1{A}\gnl
  \gmp{\overline{\bullet}}\gmp{\overline{\bullet}}\gnl
  \gmu\gnl
  \gob 2{\ol A}
  \gend= \gbeg 35
  \got 1A\gvac 1\got 1A\gnl
  \gbbr31\gnl
  \gwmu 3\gnl
  \gvac 1\gmp{\overline{\bullet}}\gnl
  \gvac 1\gob 1{\ol A}
  \gend \qquad\text{or
}\qquad\gbeg 25
  \got 1{A}\got 1{A}\gnl
  \gmp{\overline{\bullet}}\gmp{\overline{\bullet}}\gnl
  \gibr\gnl
  \gmu\gnl
  \gob 2{\ol A}
  \gend= \gbeg 34
  \got 1A\gvac 1\got 1A\gnl
  \gwmu 3\gnl
  \gvac 1\gmp{\overline{\bullet}}\gnl
  \gob 3{\ol A}
  \gend,$$  
where $\overline{\bullet}:A\to \ol A, a\mapsto \ol a$ is the canonical map. More precisely, $\ol a \ol b=\overline{\nabla_{A}\circ \tau_{AA}(a\otimes b)}$. An algebra $A$ is called braided commutative, if it is equal to its opposite algebra in $\B$, i.e. $\nabla_{A}=\nabla_{A}\circ \tau_{AA}$. Clearly, there is `another' opposite algebra product defined by composing the braiding inverse with the multiplication. But in this paper, we will only use the opposite algebra defined above. Moreover, we define $\tau_{\ol{A}\,\ol{A}}:=\tau_{AA}^{-1}$. As a result, $\ol{\ol{A}}=A$.  Let $A$ and $B$ be two algebras in $\B$. Then $A\otimes B$ is an algebra in $\B$ with product and unit given by
$$\nabla_{A\otimes B}=\gbeg64\got1A\got1B\got1A\got1B\gnl
\gcl1\gbr\gcl1\gnl
\gmu\gmu\gnl
\gob2A\gob2B
\gend,\qquad
\eta_{A\otimes B}=\gbeg23\gnl
\gu1\gu1\gnl
\gob1A\gob1B
\gend.
$$

\begin{Prop}\label{prop. module of tensor algebra}
Let $A$ and $B$ be two algebras in $\B$. Then $P$ is a left $A\otimes B$-module if and only if $P$ is a left $A$ and $B$-module such that
\[b(ap)=a_{\alpha}\,(b^{\alpha}p)\]
for all $a\in A, b\in B$ and $p\in P$. Similarly, $P$ is a right $A\otimes B$-module if and only if $P$ is a right $A$ and $B$-module such that
\[(pb)a=(pa_{\alpha})b^{\alpha}.\]
\end{Prop}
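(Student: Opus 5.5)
We prove the left-module statement in detail; the right-module statement follows by the mirror-image argument, or equivalently by applying the left version in the reversed category.

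\emph{From an $A\otimes B$-module to compatible $A$- and $B$-modules.} Suppose $P$ is a left $A\otimes B$-module with action $\mu_\ell\colon A\otimes B\otimes P\to P$. Restrict along the algebra maps $A\to A\otimes B$, $a\mapsto a\otimes 1$, and $B\to A\otimes B$, $b\mapsto 1\otimes b$. These are algebra morphisms in $\B$, because the braiding is natural with respect to the unit $\eta$ and so $\tau(\eta\otimes -)$ and $\tau(-\otimes\eta)$ are trivial. So $P$ becomes a left $A$-module and a left $B$-module in $\B$. Now compute the product $(1\otimes b)(a\otimes 1)$ in $A\otimes B$ using $\nabla_{A\otimes B}$. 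The middle crossing is $\tau_{BA}(b\otimes a)=a_\alpha\otimes b^\alpha$, so this product equals $a_\alpha\otimes b^\alpha$. By the same formula, $(a\otimes 1)(1\otimes b)=a\otimes b$. Associativity of the action then gives
\[
b(ap)=\bigl((1\otimes b)(a\otimes 1)\bigr)p=(a_\alpha\otimes b^\alpha)p=a_\alpha(b^\alpha p).
\]

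\emph{From compatible modules to an $A\otimes B$-module.} Conversely, given the two actions and the compatibility, define $(a\otimes b)p:=a(bp)$. This is a composite of morphisms in $\B$, so it is a morphism in $\B$. Unitality is immediate. For associativity, compute
\[
(a\otimes b)(a'\otimes b')=aa'_\alpha\otimes b^\alpha b'.
\]
Then
\[
\bigl((a\otimes b)(a'\otimes b')\bigr)p=(aa'_\alpha)\bigl((b^\alpha b')p\bigr)=a\Bigl(a'_\alpha\bigl(b^\alpha(b'p)\bigr)\Bigr).
\]
On the other side,
\[
(a\otimes b)\bigl((a'\otimes b')p\bigr)=a\Bigl(b\bigl(a'(b'p)\bigr)\Bigr),
\]
and the compatibility applied with $p$ replaced by $b'p$ turns $b\bigl(a'(b'p)\bigr)$ into $a'_\alpha\bigl(b^\alpha(b'p)\bigr)$. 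So the two sides agree.

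\emph{The constructions are mutually inverse.} Starting from an $A\otimes B$-module, we have $a(bp)=\bigl((a\otimes 1)(1\otimes b)\bigr)p=(a\otimes b)p$, so the induced $A\otimes B$-action is the original one. Starting from the two actions, the restrictions of $(a\otimes b)p:=a(bp)$ to $a\otimes 1$ and $1\otimes b$ are clearly the original $A$- and $B$-actions.

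\emph{The right-module case.} Define $p(a\otimes b):=(pa)b$. The relevant product is $(a\otimes 1)(1\otimes b)=a\otimes b$, while $(1\otimes b)(a\otimes 1)=a_\alpha\otimes b^\alpha$. Associativity $p\bigl((1\otimes b)(a\otimes 1)\bigr)=(pb)a$ then gives exactly the stated condition $(pb)a=(pa_\alpha)b^\alpha$, and the converse is checked as in the left case.

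\emph{Main obstacle.} The only real care needed is in the associativity check: the condition must be applied inside $\B$, where elements are only notation for morphisms. The identity $b(a'(b'p))=a'_\alpha(b^\alpha(b'p))$ is the given condition precomposed with $\id_B\otimes\id_A\otimes\mu_\ell^B$ (where $\mu_\ell^B$ is the $B$-action). That precomposition is legitimate because of naturality of $\tau$, which lets the action on the $P$-strand pass under the braiding crossing. Graphically, this is sliding the $b'$-action node past the $A$–$B$ crossing.
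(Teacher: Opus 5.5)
The paper states this proposition without proof (it is only invoked later, e.g.\ to justify $\int^{a}\int^{b}=\int^{b}\int^{a}$), so there is no argument of the paper's to compare against. Your argument is the standard verification and is correct in both the left and right cases: restrict along $a\mapsto a\otimes 1$ and $b\mapsto 1\otimes b$, use $(1\otimes b)(a\otimes 1)=a_{\alpha}\otimes b^{\alpha}$ and $(a\otimes 1)(1\otimes b)=a\otimes b$, and conversely set $(a\otimes b)p:=a(bp)$.

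One small correction to your closing remark. Precomposing the compatibility identity with $\id_B\otimes\id_A\otimes\mu^{B}_{\ell}$ needs only the interchange law for $\otimes$, because the $b'$-action sits on strands disjoint from the $B$--$A$ crossing. Naturality of $\tau$ is only needed for unit identities such as $\tau_{BA}(1\otimes a)=a\otimes 1$.
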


Let $M$ be a left $\ol B$-module and $N$ be left $B$-module.We define 
\begin{align*}
M\diamond_{B} N:=\int_{b} {}_{\Bar{b}}M\ot {}_{b}N:=M\otimes N/\langle \Bar{b}m\otimes n- m_{\alpha}\otimes b^{\alpha}n|b\in B, m\in M, n\in N\rangle.
\end{align*}
If $M$ is a right $B$-module and $N$ is a left $B$-module, then we define 
\begin{align*}
M\ot_{B} N:=\int_{b} M_{b}\ot {}_{b}N:=M\otimes N/\langle mb\otimes n- m\otimes bn|b\in B, m\in M, n\in N\rangle.
\end{align*}
Similarly, if $M$ is a right $\BB$-module and $N$ is a left $\BB$-module, then we define 
\begin{align*}
M\ot_{\BB} N:=\int_{b} M_{\Bar{b}}\ot {}_{\Bar{b}}N:=M\otimes N/\langle m\Bar{b}\otimes n- m\otimes \Bar{b}n|b\in \BB, m\in M, n\in N\rangle.
\end{align*}
Moreover, if $M$ is a right $\BB$-module and $N$ is a right $B$-module, we define
\begin{align*}
    \int^{b}M_{\Bar{b}}\ot N_{b}:=\{\sum_{i}m_{i}\ot n_{i}\in M\ot N\quad |\quad \sum_{i}m_{i}\overline{b_{\alpha}}\ot n_{i}^{\alpha}=m_{i}\ot n_{i}b, \forall b\in B\}.
\end{align*}

% The symbol $\int^{b}$ and $\int^{c}$ braided commute, also $\int_{b}$ and $\int_{c}$ braided commute. More precisely, given a right $A\ot B$-module $M$ and a left $A\ot B$-module $N$
% \[\int_{a}\int_{b}M_{a,b} \ot {}_{b,a}N=\int_{b}\int_{a}M_{b,a} \ot {}_{a,b}N\]
 For any $\BB$-bimodule $M$ and any $B$-bimodule $N$, we also define
\begin{align*}
    M\times_{B}N:=\{\sum_{i}m_{i}\ot n_{i}\in M\diamond_{B} N\quad |\quad \sum_{i}m_{i}\overline{b_{\alpha}}\ot n_{i}^{\alpha}=m_{i}\ot n_{i}b, \forall b\in B\}.
\end{align*}
$M\times_{B}N$ is called the Takeuchi product \cite{Takeuchi77} of $M$ and $N$  in $\B$. We can check that it is well defined. Indeed, the right $B$-module structure on $M\diamond_{B}N$ given by $(m\ot n) b=m\otimes nb$ is clearly well defined. To check the right $\BB$-module structure on $M\diamond_{B}N$ given by $(m\ot n)\ol b=m\ol{b_\alpha}\otimes n^{\alpha}$ is well defined, we have
\begin{align*}
\ol am\,\ol{b_{\alpha}}\ot n^{\alpha}=(m\,\ol{b_{\alpha}})_{\beta}\ot a^{\beta}\,n^{\alpha}=m_{\gamma}\,\ol{b_{\alpha\beta}}\ot a^{\gamma\beta}\,n^{\alpha}=m_{\beta}\ol{b_{\alpha} }\ot (a^{\beta}n)^{\alpha}.
\end{align*}
Actually, it is more obvious to give the graphical computations
$$\gbeg5{6}\got1B\got1M\got1{\diamond_{B}}\got1N\got1B\gnl
\gmp{\ob}\gcl1\gvac1\gcl1\gmp{\ob}\gnl
\glm\gvac1\gbr\gnl
\gvac1\gcn2113\gcl1\gcl1\gnl
\gvac1\gvac1\grm\gcl1\gnl
\gvac1\gvac1\got1{M}\got1{\diamond_{B}}\got1N
\gend
=\gbeg5{8}\got1B\got1M\got1{\diamond_{B}}\got1N\got1B\gnl
\gmp{\ob}\gcl1\gvac1\gcl1\gmp{\ob}\gnl
\gcl1\gcl1\gvac1\gbr\gnl
\gcl1\gcn2113\gcl1\gcl1\gnl
\gcn2113\grm\gcl1\gnl
\gvac1\glm\gvac1\gcl1\gnl
\gvac1\gvac1\got1{M}\got1{\diamond_{B}}\got1N
\gend=\gbeg5{8}\got1B\got1M\got1{\diamond_{B}}\got1N\got1B\gnl
\gcl1\gcl1\gvac1\gcl1\gmp{\ob}\gnl
\gcl1\gcl1\gvac1\gbbr22\gnl
\gcn2113\gcn21{-1}1\gnl
\gvac1\gcl1\grm\gcl1\gnl
\gvac1\gbr\gcn2131\gnl
\gvac1\gcl1\glm\gnl
\gvac1\got1{M}\got1{\diamond_{B}}\got1N
\gend
=\gbeg7{8}\got1B\got1M\got1{\diamond_{B}}\got1N\got1B\gnl
\gcl1\gcl1\gvac1\gcl1\gmp{\ob}\gnl
\gbr\gvac1\gcl1\gcl1\gnl
\gcl1\gcn2113\gcl1\gcl1\gnl
\gcl1\gvac1\glm\gcl1\gnl
\gcn2115\gvac1\gbr\gnl
\gvac1\gvac1\grm\gcl1\gnl
\gvac1\gvac1\got1{M}\got1{\diamond_{B}}\got1N
\gend.
$$
We also denote Takeuchi product by $\int^{a}\int_{b} {_{\Bar{b}}M_{\Bar{a}}}\ot {_bN_a}.$

Let $B^e:=B\ot\BB$. If $P$ is a $B^{e}$-bimodule, then $P\times_{B}N$ is a $B$-bimodule with $B$ acting on $P$ in a braided way. More precisely, $a(p\otimes n)b=a\,p\,b_{\alpha}\otimes n^{\alpha}$.  Similarly, $M\times_{B}P$ is a $\BB$-bimodule given by $\Bar{a}(n\otimes p)\Bar{b}=n_{\alpha}\ot \overline{a^{\alpha}}\, p\,\Bar{b}$. If both $M$ and $N$ are $B^{e}$-bimodules, then $M\times_{B} N$ is also a $B^{e}$-bimodule. 

However, the product $\times_{B}$ is neither associative nor unital on the category of $B^{e}$-bimodules. Given three $B^{e}$-bimodules $M,N, P$, we can define
\begin{align*}
    M\times_{B}P\times_{B}N:=\int^{a,b}\int_{c,d} {}_{\Bar{c}}M_{\Bar{a}}\ot {}_{c,\Bar{d}}P_{a, \Bar{b}}\ot {}_{d}N_{b},
\end{align*}
where $\int^{a,b}:=\int^{a}\int^{b}$ and $\int_{c,d}:=\int_{c}\int_{d}$. Notice that by Proposition \ref{prop. module of tensor algebra}, we have $\int^{a}\int^{b}=\int^{b}\int^{a}$ and $\int_{c}\int_{d}=\int_{d}\int_{c}$ as the $B$ and $\BB$-module structures on $P$ braided commute. There are maps
\begin{align*}
    &\alpha:(M\times_{B}P)\times_{B} N\to M\times_{B}P\times_{B}N,\quad m\ot p\ot n\mapsto m\ot p\ot n,\\
    &\alpha':M\times_{B}(P\times_{B} N)\to M\times_{B}P\times_{B}N,\quad m\ot p\ot n\mapsto m\ot p\ot n.\\
\end{align*}
Notice that neither $\alpha$ nor $\alpha'$ are isomorphisms  in general. In particular, if all $B$-module and $\BB$-module structures are faithfully flat, then $\alpha$ and $\alpha'$ are isomorphisms. 

\section{Braided Hopf algebroids}
Given an algebra $B\in\B$, a \textit{$B$-ring} is an algebra $P$ in $\B$ if there is an algebra map $\eta:B\to P$ in $\B$. We also denote the map graphically by without mention the map $\eta$
$$\eta=\gbeg1{3}\got1B\gnl
\gcl1\gnl\gob1{P}\gend:=\gbeg1{5}\got1B\gnl
\gcl1\gnl\gmp{\eta}\gnl\gcl1\gnl\gob1{P}\gend$$
If $B\subset P$ is a subalgebra, we call $B$ belongs to the braided center of $P$ if $bp=p_{\alpha}b^{\alpha}$ for all $b\in B$ and $p\in P$.
\begin{Lem}
If $P$ is a $B\otimes \BB$-ring, then $P\times_{B} P$ is a $B\otimes \BB$-ring with structure 
\[(p\ot q)(p'\ot q')=pp'_{\alpha}\ot q^{\alpha}q',\qquad\eta(a\otimes \ol b)=a\otimes \ol b\in P\times_{B}P.\]
\end{Lem}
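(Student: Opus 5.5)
The plan is to realize $P\times_{B}P$ as a subobject of a quotient of the braided tensor product algebra $P\otimes P$, and to check that the product descends and restricts. In $P\ot P$ the product $(p\ot q)(p'\ot q')=pp'_{\alpha}\ot q^{\alpha}q'$ is exactly $\nabla_{P\otimes P}$ from Section~\ref{sec:prelims}. That product is associative and unital in $\B$ by the standard braided-naturality argument, so the task splits into three checks: (i) well-definedness on the Takeuchi product, (ii) the unit and the map $\eta$, and (iii) that $\eta$ is an algebra map. Throughout, the first factor $P$ is viewed as a $\BB$-bimodule and the second as a $B$-bimodule, via the algebra map $B\ot\BB\to P$. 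By Proposition~\ref{prop. module of tensor algebra}, the images of $B$ and $\BB$ in $P$ braided commute.

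For (i), write $I\subset P\ot P$ for the subobject spanned by $\bar b p\ot q - p_{\alpha}\ot b^{\alpha}q$. The product is \emph{not} defined on all of $P\diamond_B P$, as in the classical Takeuchi situation. So I would define the product on $(P\times_B P)\ot (P\diamond_B P)$, or even on $(P\times_B P)\ot(P\times_BP)$, by the formula on representatives, and verify two things. First, a relation in the right-hand factor is killed: multiplying $p\ot q$ (satisfying the Takeuchi condition) by $\bar b p'\ot q' - p'_{\alpha}\ot b^{\alpha}q'$ produces, after pulling $\bar b$ left through $q$ by naturality of $\tau$, the expression $p\,\overline{b_\beta}\, p'_{\alpha}\ot q^{\beta\alpha} q' - p p'_{\alpha}\ot q^{\alpha} b\,q'$. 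This vanishes precisely by the Takeuchi condition $\sum p\,\overline{b_\beta}\ot q^\beta=\sum p\ot qb$. Second, a relation in the left-hand factor is killed: multiplying the relation $\bar b p\ot q - p_{\alpha}\ot b^{\alpha}q$ on the right by $p'\ot q'$ gives again an element of $I$, using that left multiplication is a $\BB$- resp.\ $B$-module map and naturality of $\tau$ to move $p'$ past $b$.

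Next I show the product of two elements of $P\times_B P$ again satisfies the Takeuchi condition. Multiplying $(pp'_\alpha\ot q^\alpha q')$ on the right by $b$ in the second factor, I use the condition for $p'\ot q'$ to replace $q'b$ by $\overline{b_\gamma}$ on $p'$. Then naturality of the braiding moves $\overline{b}$ past $q^\alpha$, and the right $\BB$-action on the first factor is just multiplication in $P$. This is cleanest graphically, as in the well-definedness computation for $M\times_B N$ given earlier. Associativity and unitality are then inherited from $P\ot P$, with one caveat: since $P\times_BP$ is only a subquotient, they hold because the triple product is computed by representatives, and associativity of $\nabla_{P\otimes P}$ holds on the nose. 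The unit is $1\ot1$, which clearly satisfies the Takeuchi condition.

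For (ii)–(iii), $\eta(a\ot\bar b)=a\ot\bar b$ should be read as $\eta_P(a)\ot\eta_P(\bar b)$, with $a$ in the first copy and $\bar b$ in the second. I must check this lies in $P\times_BP$. That $\eta$ is multiplicative amounts to $(a\ot\bar b)(a'\ot\bar b')=aa'_\alpha\ot \bar b^\alpha\bar b'$, which matches the product on $B\ot\BB$ via $\nabla_{B\otimes B}$ and the braiding convention $\tau_{\BB\BB}=\tau^{-1}$; this uses the braided commutativity of the $B$- and $\BB$-images. The main obstacle I expect is the Takeuchi condition for $\eta$ and the bookkeeping of which braiding ($\tau$ versus $\tau^{-1}$, given $\tau_{\BB\,\BB}=\tau^{-1}_{BB}$) appears when moving $\bar b$ across. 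I would handle this graphically, using Proposition~\ref{prop. module of tensor algebra} to slide strands. If $\eta(a\ot\bar b)$ does not literally lie in the Takeuchi subobject in that placement, I would instead use $a\ot 1\cdot 1\ot\bar b$ with $B$ acting on the left factor and $\BB$ through the right, matching the $B^e$-bimodule structures described earlier.
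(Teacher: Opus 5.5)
Your proposal is correct and follows the paper's proof essentially step for step. It uses the same three checks: the left-factor relation of $\diamond_B$ is killed for an arbitrary right factor, the right-factor relation is killed using the Takeuchi condition of the left factor, and $P\times_B P$ is closed under the product. One notational slip: in your first check the second term should read $pp'_{\alpha}\ot(qb)^{\alpha}q'$, not $pp'_{\alpha}\ot q^{\alpha}bq'$.

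The point you left open does hold, and your fallback $(a\ot 1)(1\ot\ol b)$ is literally the same element $a\ot\ol b$. Indeed $a\ot 1$ and $1\ot\ol b$ each satisfy the Takeuchi condition, by the defining relation of $\diamond_B$ together with the commutation $\ol c\,a=a_{\alpha}\,\ol{c^{\alpha}}$ of the images of $s$ and $t$ in $P$. Closure of $P\times_B P$ under the product then puts $a\ot\ol b$ in $P\times_B P$.
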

\begin{proof}
First, we show that the map $(P\diamond_{B}P)\times (P\ot P)\to P\diamond_{B}P, (p\ot q)\ot (p'\ot q')\mapsto pp'_{\alpha}\ot q^{\alpha}q'$ is well defined. Indeed,
\begin{align*}
p_{\gamma}p'_{\alpha}\ot (b^{\gamma}q)^{\alpha}q'=p_{\gamma}p'_{\alpha\beta}\ot b^{\gamma\beta}\,q^{\alpha}q'=(pp'_{\alpha})_{\beta}\ot b^{\beta}\,q^{\alpha}q'=\ol{b}pp'{}_{\alpha}\ot q^{\alpha}q'.
\end{align*}
Second, we show that  the map $(P\times_{B}P)\times (P\diamond_{B} P)\to P\diamond_{B}P, (p\ot q)\ot (p'\ot q')\mapsto pp'_{\alpha}\ot q^{\alpha}q'$ is well defined. Indeed, by the same method as above, the map factors the first diamond product in $P\diamond_{B}P$. To see the map factors through the second diamond product, we have
\begin{align*}
p(\ol{b}p')_{\alpha}\ot q^{\alpha}q'=p\ol{b_{\alpha}}p'{}_{\beta}\ot q^{\alpha\beta}q'=pp'{}_{\beta}\ot (qb)^{\beta}q'=pp'{}_{\alpha\beta}\ot q^{\beta}b^{\alpha}q'.
\end{align*}
Finally, we check that the restriction of the map in $(P\times_{B}P)\times (P\times_{B} P)$ has image in $P\times_{B} P$:
\begin{align*}
pp'{}_{\alpha}\ol{b}_{\beta}\ot (q^{\alpha}q')^{\beta}=pp'{}_{\alpha}\ol{b}_{\beta\gamma}\ot q^{\alpha\gamma}q'^{\beta}=p(p'\ol{b}_{\beta})_{\alpha}\ot q^{\alpha}q'^{\beta}=pp'_{\alpha}\ot q^{\alpha}q'b.
\end{align*}
It is not hard to see $\eta$ is an algebra map.
\end{proof}
In particular,  $P\times_{B} P$ and $B$ are left $B^e$-modules with left $B^e$-actions given by 
\begin{equation}\label{eq:rbgd.bimod}
(a\ot \ol b)(p\ot q)=ap_{\alpha}\ot \ol{b^{\alpha}}q,\qquad (a\ot \ol {a'})b=ab_{\alpha}\,a'{}^{\alpha}.
\end{equation}
For every $B^e$-ring $P$, we will denote $s:=\eta|_{B\ot 1}:B\to P$ and $t:=\eta|_{1\ot \ol B}:\ol B\to P$. If $P$ is a $B^e$-ring, we also denote the source and target map graphically by
$$s=\gbeg1{3}\got1B\gnl
\gcl1\gnl\gob1{P}\gend\qquad\textup{and}\qquad t=\gbeg1{3}\got1{\BB}\gnl
\gcl1\gnl\gob1{P}\gend$$
without mentioning $s$ and $t$ explicitly.
\begin{Def}
Given an algebra $B\in\B$, a left $B$-bialgebroid is a $B\ot \BB$-ring $\CL$ (with $\eta:B\ot \BB\to \CL$), such that
\begin{itemize}
\item[(i)]  There are two left $B^{e}$-module maps in the sense of (\ref{eq:rbgd.bimod}), the coproduct $\Delta:\CL\to \CL\times_{B} \cL$ and counit $\varepsilon:\CL\to B$ satisfying
\begin{align*}
&\alpha\circ(\Delta\times_{B}\id)\circ\Delta=\alpha'\circ(\id\times_{B}\Delta)\circ\Delta:\cL\to \CL\times_{B}\cL\times_{B}\CL\\    &(\varepsilon\diamond_{B} \id)\circ \Delta=(\id\diamond_{B} \varepsilon)\circ \Delta=\id,
\end{align*}
for any $X\in\CL$.  We will use the sumless Sweedler index to denote the coproduct, namely, $\Delta(X)=X\on\ot X\t$.
\item[(ii)] The coproduct is an algebra map. The counit $\varepsilon$ is a left character in the following sense:
\begin{equation*}\varepsilon(1_{\CL})=1_{B}, \quad \varepsilon(X\varepsilon(Y))=\varepsilon(XY)=\varepsilon(X\overline{\varepsilon(Y)})\end{equation*}
for all $X,Y\in \cL$ and $a\in B$.
\end{itemize}
A left $B$-bialgebroid is a left Hopf algebroid if the map
\[\lambda:\CL\ot_{\BB}\CL\to \cL\diamond_{B}\cL,\qquad X\otimes Y\mapsto X\on\ot X\t\,Y\]
is bijective. A left $B$-bialgebroid is an  anti-left Hopf algebroid if the map
\[\mu:\CL\ot_{B}\CL\to \cL\diamond_{B}\cL,\qquad X\otimes Y\mapsto X\on\,Y_{\alpha}\ot X\t{}^{\alpha}.\]
A left $B$-bialgebroid is a $B$-Hopf algebroid if it is a left Hopf algebroid and anti-left Hopf algebroid.
\end{Def}

\begin{Rem}
We can see that the above definition generalizes the definition of Hopf algebroids in (\cite{schau1}, Thm and Def 3.5.). In the following, we will use
$$\Delta=\gbeg23\gvac1\got1{\CL}\gnl\gwcm3\gnl \gvac{1}\gob 1{\CL\times_{B}\CL}\gend
  \qquad,
  \varepsilon=\gbeg1{5}\got1{\CL}\gnl\gcl1\gnl\gmp{\varepsilon}\gnl\gcl1\gnl
  \gob1B\gend
$$
for the comultiplication and counit of a bialgebroid $\CL$ in $\B$.
% the comodule structure of a right $C$-comodule $M\in\B^C$, and
% that of a left $C$-comodule $M\in{^C\B}$. 
Graphically, the coproduct is an algebra map means  
$$
 \gbeg3{5} 
  \got 1{\CL}\gvac 1\got 1{\CL}\gnl
  \gwmu 3\gnl
  \gvac1\gcl1\gnl
\gwcm3\gnl
 \gvac{1}\gob 1{\CL\times_{B}\CL}
\gend
=\gbeg4{5}
 \got2{\CL}\got2{\CL}\gnl
 \gcmu\gcmu\gnl
\gcl1\gbr\gcl1\gnl
\gmu\gmu\gnl
\gvac1\gob2{\CL\times_{B}\CL}
\gend.$$
The counital property means
$$
 \gbeg2{5}
 \got2{\CL}\gnl
 \gcmu\gnl
\gmp{\varepsilon}\gcl1\gnl
\glm\gnl
\gvac1\gob1{\CL}
\gend=\gbeg1{4}\got1{\CL}\gnl
\gcl1\gnl
\gcl1\gnl
\gob1{\CL}
\gend
= \gbeg2{6}
 \got2{\CL}\gnl
 \gcmu\gnl
\gcl1\gmp{\ol{\varepsilon}}\gnl
\gibr\gnl
\glm\gnl
\gvac1\gob1{\CL}
\gend.$$
where $\ol{\varepsilon}:=(\ob)\circ \varepsilon:\CL\to \BB$.  Finally, the canonical maps $\lambda$ and $\mu$ mean
$$\lambda=\gbeg4{4}\got3{\CL\otimes_{\BB}\CL}\gnl
\gcmu\gcl1\gnl
\gcl1\gmu\gnl
\gob2{{\CL\diamond_{B}\CL}}
\gend,\qquad\mu=\gbeg4{5}\got3{\CL\otimes_{B}\CL}\gnl
\gcmu\gcl1\gnl
\gcl1\gbr\gnl
\gmu\gcl1\gnl
\gob3{\CL\diamond_{B}\CL}
\gend.$$
We can check that $\mu$ is well defined. Indeed,
\begin{align*}
\mu(X\,b\ot Y)=X\on b_{\alpha} Y_{\beta}\ot X\t{}^{\alpha\beta}=X\on (bY)_{\alpha}\ot X\t{}^{\alpha}=\mu(X\ot b\,Y).
\end{align*}
\end{Rem}
If $\CL$ is an (anti)-left $B$-Hopf algebroid, we will denote $\lambda^{-1}|_{\CL\diamond_{B}1}:\CL\to \CL\ot_{\BB}\CL, X\mapsto X_{+}\ot X_{-}$, and $\mu^{-1}|_{1\diamond _{B}\CL}:\CL\to \CL\ot_{B}\CL, X\mapsto X_{[+]}\ot X_{[-]}$. Graphically, we will denote these two maps by
$$\gbeg 45
            \got 3{\CL}\gnl
            \gvac1\gcl1\gnl
            \glmpb\gnot{+-}\gcmpt\grmpb\gnl
            \gcl1\gvac1\gcl1\gnl
            \gob3{\CL\otimes_{\BB}\CL}
            \gend \qquad\textup{and}\qquad
            \gbeg 45
            \got 3{\CL}\gnl
            \gvac1\gcl1\gnl
            \glmpb\gnot{[+][-]}\gcmpt\grmpb\gnl
            \gcl1\gvac1\gcl1\gnl
            \gob3{\CL\otimes_{B}\CL}
            \gend.$$
The above two maps are called (anti)-translation maps, which determine $\lambda^{-1}$ and $\mu^{-1}$ in the sense that
$$\lambda^{-1}=\gbeg3{5}\gvac1\got3{\CL\diamond_{B}\CL}\gnl\gvac1\gcl1\gvac1\gcl1\gnl
 \glmpb\gnot{+-}\gcmpt\grmpb\gcl1\gnl\gcl1\gvac1\gmu\gnl
 \gob3{\,\CL\,\otimes_{\BB}\,\,\CL}
 \gend\qquad\textup{and}\qquad\mu^{-1}=\gbeg3{8}\gvac1\got1{\CL\diamond_{B}\CL}\gnl
 \gcl1\gvac1\gcl1\gnl\gcl1\glmpb\gnot{[+][-]}\gcmpt\grmpb\gnl
 \gibr\gvac1\gcl1\gnl
 \gcl1\gcn1113\gvac1\gcl1\gnl
 \gcl1\gvac1\gibr\gnl
 \gcl1\gvac1\gmu\gnl
 \gvac1\gob1{\,\CL\,\otimes_{B}\,\,\CL.}
 \gend
$$
Indeed, by the definition of the anti-translation maps we can check that $\mu^{-1}$ given above satisfies $\mu\circ\mu^{-1}=\id_{\CL\diamond_{B}\CL}$. Indeed,

$$\mu\circ\mu^{-1}=\gbeg4{12}\gvac1\got1{\CL\diamond_{B}\CL}\gnl
 \gcl1\gvac1\gcl1\gnl\gcl1\glmpb\gnot{[+][-]}\gcmpt\grmpb\gnl
 \gibr\gvac1\gcl1\gnl
 \gcl1\gcn1113\gvac1\gcl1\gnl
 \gcl1\gvac1\gibr\gnl
 \gcl1\gvac1\gmu\gnl
 \gcn1112\gvac1\gcn1121\gnl
 \gcmu\gcl1\gnl
\gcl1\gbr\gnl
\gmu\gcl1\gnl
\gob3{\CL\diamond_{B}\CL}
 \gend=\gbeg4{11}\gvac1\gvac1\got1{\CL\diamond_{B}\CL}\gnl
\gvac1\gibbr31\gnl
\glmpb\gnot{[+][-]}\gcmpt\grmpb\gcl1\gnl
\gcn1112\gvac1\gcl1\gcl1\gnl
\gcmu\gcl1\gcl1\gnl
\gcl1\gbr\gcl1\gnl
\gmu\gcl1\gcl1\gnl
\gcn1123\gvac1\gcl1\gcl1\gnl
\gvac1\gcl1\gbr\gnl
\gvac1\gmu\gcl1\gnl
\gvac1\gob4{\CL\diamond_{B}\CL}\gend=\gbeg4{5}\gvac1\got1{\CL\diamond_{B}\CL}\gnl
\gibbr31\gnl
\gcl1\gvac1\gcl1\gnl
\gbbr31\gnl
\gvac1\gob1{\CL\diamond_{B}\CL}\gend=\gbeg4{3}\gvac1\got1{\CL\diamond_{B}\CL}\gnl
\gcl1\gvac1\gcl1\gnl
\gvac1\gob1{\CL\diamond_{B}\CL}\gend.$$
As $\mu$ is bijective, it is the two-sides inverse of $\mu$. By the same method, we can also check $\lambda^{-1}$ can be determined by the translation map as above. 

Similarly to \cite[Prop.~3.7]{schau1}, we have
\begin{Prop}If $\CL$ is a left $B$-Hopf algebroid in $\B$, we have
            {\allowdisplaybreaks\begin{gather}
     \label{bydef1}
\gbeg 4{7}
        \got 3{\CL}\gnl
            \gvac1\gcl1\gnl
            \glmpb\gnot{+-}\gcmpt\grmpb\gnl
            \gcn1112\gvac1\gcl1\gnl
            \gcmu\gcl1\gnl
            \gcl1\gmu\gnl
            \gob2{\CL\diamond_{B}\CL}
            \gend=\gbeg 3{3}
        \got 3{\CL\diamond_{B}B}\gnl
            \gcl1\gvac1\gcl1\gnl
            \gob3{\CL\diamond_{B}\CL}
            \gend\qquad\textup{and}\qquad
            \gbeg 4{5}
       \gvac1\got3{\CL}\gnl
       \gvac1\gwcm3\gnl
        \glmpb\gnot{+-}\gcmpt\grmpb\gcl1\gnl
       \gcl1\gvac1\gmu\gnl
       \gvac1\gob1{\CL\,\,\ot_{\BB}\,\CL}
            \gend=\gbeg 3{3}
        \got 3{\CL\ot_{\BB}\BB}\gnl
            \gcl1\gvac1\gcl1\gnl
            \gob3{\CL\ot_{\BB}\CL}
            \gend
                      \\\notag\\
     \label{unittrans}
            \gbeg 45
            \got 3{B}\gnl
            \gvac1\gcl1\gnl
            \glmpb\gnot{+-}\gcmpt\grmpb\gnl
            \gcl1\gvac1\gcl1\gnl
            \gob3{\CL\otimes_{\BB}\CL}
            \gend=
            \gbeg 3{5}
        \got 1{B}\gvac1\gnl
        \gcl1\gvac1\gvac1\gnl
            \gcl1\gvac1\gu1\gnl
            \gcl1\gvac1\gcl1\gnl\gob3{\CL\ot_{\BB}\CL}\gend\qquad\textup{and}\qquad \gbeg 45
            \got 3{\BB}\gnl
            \gvac1\gcl1\gnl
            \glmpb\gnot{+-}\gcmpt\grmpb\gnl
            \gcl1\gvac1\gcl1\gnl
            \gob3{\CL\otimes_{\BB}\CL}
            \gend=
            \gbeg 3{5}
        \gvac1\got 3{\BB}\gnl
        \gvac1\gvac1\gcl1\gnl
            \gu1\gvac1\gmp{\ob}\gnl
            \gcl1\gvac1\gcl1\gnl
            \gob3{\CL\ot_{\BB}\CL}\gend
            \\\notag\\
     \label{transnabla}
            \gbeg 4{6}
            \got1{\BB}\gvac1\got1{\CL}\gnl
            \gcl1
            \gvac1\gcl1\gnl
            \gcl1\glmpb\gnot{+-}\gcmpt\grmpb\gnl
            \gcl1\gcl1\gvac1\gcl1\gnl
            \glm\gvac1\gcl1\gnl
            \gvac1\gvac1\gob1{\CL\ot_{\BB}\CL}
            \gend=\gbeg 4{6}
            \gvac1\got1{\BB}\gvac1\got1{\CL}\gnl
            \gvac1\gbbr31\gnl
            \glmpb\gnot{+-}\gcmpt\grmpb\gcl1\gnl
            \gcl1\gvac1\gcl1\gcl1\gnl
            \gcl1\gvac1\grm\gnl
            \gvac1\gob1{\CL\ot_{\BB}\CL}
            \gend\qquad\textup{and}\qquad\gbeg 35
            \got1{\CL}\gvac1\got1{\CL}\gnl
            \gwmu3\gnl
            \glmpb\gnot{+-}\gcmpt\grmpb\gnl
            \gcl1\gvac1\gcl1\gnl
            \gvac1\gob1{\CL\ot_{\BB}\CL}
            \gend
            =
            \gbeg 47
            \got1{\CL}\gvac2\got1{\CL}\gnl
            \gcl1\gvac2\gcl1\gnl
            \gdnot{+-}\glmptb\grmpb\gdnot{+-}\glmpb\grmptb\gnl
            \gcl1\gbr\gcl1\gnl
            \gmu\gbr\gnl
            \gcn2122\gmu\gnl
             \gvac1\gob2{\CL\ot_{\BB}\CL}
            \gend
             \\\notag\\
     \label{transleftcolinear}
            \gbeg 5{5}
           \gvac1\got2{\CL}\gnl
           \gwcm4\gnl
           \gcl1\gvac1\glmpb\gnot{+-}\gcmpt\grmpb\gnl
           \gcl1\gvac1\gcl1\gvac1\gcl1\gnl
           \gvac1\gvac1\gob1{\CL\diamond_{B}\CL\ot_{\BB}\CL}
            \gend
            =
            \gbeg 5{7}
             \gvac1\gvac1\got1{\CL}\gnl
             \gvac1\gvac1\gcl1\gnl
           \gvac1\glmpb\gnot{+-}\gcmpt\grmpb\gnl
           \gvac1\gcl1\gvac1\gcl1\gnl
           \gwcm3\gcl1\gnl
           \gcl1\gvac1\gcl1\gcn1113\gnl
           \gvac1\gvac1\gob1{\CL\diamond_{B}\CL\ot_{\BB}\CL}
            \gend\qquad\textup{and}\qquad \gbeg 5{7}
           \gvac1\gvac1\got1{\CL}\gnl
            \gvac1\gvac1\gcl1\gnl
           \gvac1\glmpb\gnot{+-}\gcmpt\grmpb\gnl
           \gvac1\gcl1\gvac1\gcl1\gvac1\gnl
           \glmpb\gnot{+-}\gcmpt\grmpb\gcl1\gnl
           \gcl1\gvac1\gbr\gnl
            \gend
            =
            \gbeg 4{7}
             \gvac1\gvac1\got1{\CL}\gnl
             \gvac1\gvac1\gcl1\gnl
           \gvac1\glmpb\gnot{+-}\gcmpt\grmpb\gnl
           \gvac1\gcl1\gvac1\gcl1\gnl
           \gvac1\gcl1\gwcm3\gnl
           \gvac1\gcl1\gcl1\gvac1\gcl1\gnl
            \gend
             \\\notag\\
     \label{transvanish}
            \gbeg 5{5}
           \gvac1\got1{\CL}\gnl
           \gvac1\gcl1\gnl
     \glmpb\gnot{+-}\gcmpt\grmpb\gnl
           \gcl1\gvac1\gcl1\gnl
           \gwmu 3\gnl
        \gvac1\got1{B}
            \gend
            =
            \gbeg 1{5}
             \got1{\CL}\gnl
             \gcl1\gnl
             \gmp{\varepsilon}\gnl
             \gcl1\gnl
             \gob1{B}
            \gend\qquad\textup{and}\qquad \gbeg 4{7}
           \gvac1\got1{\CL}\gnl
           \gvac1\gcl1\gnl
           \glmpb\gnot{+-}\gcmpt\grmpb\gnl
          \gcl1\gvac1\gcl1\gnl
          \gcl1\gvac1\gmp{\ol{\varepsilon}}\gnl
          \gwmu 3\gnl
          \gvac{1}\gob1{\CL}
            \gend
            =
            \gbeg 1{5}
            \got1{\CL}\gnl
            \gcl1\gnl
            \gcl1\gnl
            \gcl1\gnl
            \gob1{\CL}
            \gend
  \end{gather}}
  In (\ref{bydef1}), we identify $\cL$ with $\cL\diamond_{B}B$ and $\cL$ with $\cL\ot_{\BB}\BB$. If $\CL$ is an anti-left $B$-Hopf algebroid in $\B$, we have
            {\allowdisplaybreaks\begin{gather}
     \label{bydef2}
\gbeg 4{8}
        \got 3{\CL}\gnl
            \gvac1\gcl1\gnl
            \glmpb\gnot{[+][-]}\gcmpt\grmpb\gnl
            \gcn1112\gvac1\gcl1\gnl
            \gcmu\gcl1\gnl
            \gcl1\gbr\gnl
            \gmu\gcl1\gnl
            \gob3{\CL\diamond_{B}\CL}
            \gend=\gbeg 3{3}
        \got 3{\BB\diamond_{B}\CL}\gnl
            \gcl1\gvac1\gcl1\gnl
            \gob3{\CL\diamond_{B}\CL}
            \gend\qquad\textup{and}\qquad
          \gbeg4{9}\gvac1\got1{\CL}\gnl
          \gwcm3\gnl
 \gcl1\gvac1\gcl1\gnl\gcl1\glmpb\gnot{[+][-]}\gcmpt\grmpb\gnl
 \gibr\gvac1\gcl1\gnl
 \gcl1\gcn1113\gvac1\gcl1\gnl
 \gcl1\gvac1\gibr\gnl
 \gcl1\gvac1\gmu\gnl
 \gvac1\gob2{\,\CL\,\otimes_{B}\,\,\CL.}
 \gend=\gbeg 4{3}
        \got 3{\CL\ot_{B}B}\gnl
            \gcl1\gvac1\gcl1\gnl
            \gob3{\CL\ot_{B}\CL}
            \gend
                      \\\notag\\
     \label{unitantitrans}
            \gbeg 45
            \got 3{\BB}\gnl
            \gvac1\gcl1\gnl
            \glmpb\gnot{[+][-]}\gcmpt\grmpb\gnl
            \gcl1\gvac1\gcl1\gnl
            \gob3{\CL\otimes_{B}\CL}
            \gend=
            \gbeg 3{5}
        \got 1{\BB}\gvac1\gnl
        \gcl1\gvac1\gvac1\gnl
            \gcl1\gvac1\gu1\gnl
            \gcl1\gvac1\gcl1\gnl\gob3{\CL\ot_{B}\CL}\gend\qquad\textup{and}\qquad \gbeg 45
            \got 3{B}\gnl
            \gvac1\gcl1\gnl
            \glmpb\gnot{[+][-]}\gcmpt\grmpb\gnl
            \gcl1\gvac1\gcl1\gnl
            \gob3{\CL\otimes_{B}\CL}
            \gend=
            \gbeg 3{5}
        \gvac1\got 3{B}\gnl
        \gvac1\gvac1\gcl1\gnl
            \gu1\gvac1\gmp{\ob}\gnl
            \gcl1\gvac1\gcl1\gnl
            \gob3{\CL\ot_{B}\CL}\gend
            \\\notag\\
     \label{antitransnabla}
            \gbeg 4{6}
            \got1{B}\gvac1\got1{\CL}\gnl
            \gcl1
            \gvac1\gcl1\gnl
            \gcl1\glmpb\gnot{[+][-]}\gcmpt\grmpb\gnl
            \gcl1\gcl1\gvac1\gcl1\gnl
            \glm\gvac1\gcl1\gnl
            \gvac1\gvac1\gob1{\CL\ot_{B}\CL}
            \gend=\gbeg 4{6}
            \gvac1\got1{B}\gvac1\got1{\CL}\gnl
            \gvac1\gibbr31\gnl
            \glmpb\gnot{[+][-]}\gcmpt\grmpb\gcl1\gnl
            \gcl1\gvac1\gcl1\gcl1\gnl
            \gcl1\gvac1\grm\gnl
            \gvac1\gob1{\CL\ot_{B}\CL}
            \gend\qquad\textup{and}\qquad\gbeg 35
            \got1{\CL}\gvac1\got1{\CL}\gnl
            \gwmu3\gnl
            \glmpb\gnot{[+][-]}\gcmpt\grmpb\gnl
            \gcl1\gvac1\gcl1\gnl
            \gvac1\gob1{\CL\ot_{B}\CL}
            \gend
            =
            \gbeg 47
            \got1{\CL}\gvac2\got1{\CL}\gnl
            \gcl1\gvac2\gcl1\gnl
            \gdnot{[+][-]}\glmptb\grmpb\gdnot{[+][-]}\glmpb\grmptb\gnl
            \gcl1\gibr\gcl1\gnl
            \gmu\gibr\gnl
            \gcn2122\gmu\gnl
             \gvac1\gob2{\CL\ot_{B}\CL}
            \gend
             \\\notag\\
     \label{antitransleftcolinear}
            \gbeg 5{6}
           \gvac1\gvac1\got2{\CL}\gnl
           \gvac1\gwcm4\gnl
           \glmpb\gnot{[+][-]}\gcmpt\grmpb\gvac1\gcl1\gnl
           \gcl1\gvac1\gcl1\gvac1\gcl1\gnl
           \gcl1\gvac1\gcl1\gvac1\gcl1\gnl\gvac1\gvac1\gob1{(\CL\ot_{B}\CL)\diamond_{B}\CL}
            \gend
            =
            \gbeg 5{8}
             \gvac1\gvac1\got1{\CL}\gnl
             \gvac1\gvac1\gcl1\gnl
           \gvac1\glmpb\gnot{[+][-]}\gcmpt\grmpb\gnl
           \gvac1\gcl1\gvac1\gcl1\gnl
           \gwcm3\gcl1\gnl
           \gcl1\gvac1\gbr\gnl
           \gcl1\gvac1\gcl1\gcn2113\gnl
           \gvac1\gvac1\gob1{(\CL\ot_{B}\CL)\diamond_{B}\CL}
            \gend\qquad\textup{and}\qquad \gbeg 4{7}
           \gvac1\gvac1\got1{\CL}\gnl
            \gvac1\gvac1\gcl1\gnl
           \gvac1\glmpb\gnot{[+][-]}\gcmpt\grmpb\gnl
           \gvac1\gcl1\gvac1\gcl1\gvac1\gnl
           \glmpb\gnot{[+][-]}\gcmpt\grmpb\gcl1\gnl
           \gcl1\gvac1\gcl1\gcl1\gnl
            \gend
            =
            \gbeg 4{7}
             \gvac1\gvac1\got1{\CL}\gnl
             \gvac1\gvac1\gcl1\gnl
           \gvac1\glmpb\gnot{[+][-]}\gcmpt\grmpb\gnl
           \gvac1\gcl1\gvac1\gcl1\gnl
           \gvac1\gcl1\gwcm3\gnl
           \gvac1\gcl1\gcl1\gvac1\gcl1\gnl
            \gend
             \\\notag\\
     \label{antitransvanish}
            \gbeg 5{5}
           \gvac1\got1{\CL}\gnl
           \gvac1\gcl1\gnl
     \glmpb\gnot{[+][-]}\gcmpt\grmpb\gnl
           \gcl1\gvac1\gcl1\gnl
           \gwmu 3\gnl
        \gvac1\got1{B}
            \gend
            =
            \gbeg 1{5}
             \got1{\CL}\gnl
             \gcl1\gnl
             \gmp{\ol{\varepsilon}}\gnl
             \gcl1\gnl
             \gob1{\BB}
            \gend\qquad\textup{and}\qquad \gbeg 4{7}
           \gvac1\got1{\CL}\gnl
           \gvac1\gcl1\gnl
           \glmpb\gnot{[+][-]}\gcmpt\grmpb\gnl
          \gcl1\gvac1\gcl1\gnl
          \gcl1\gvac1\gmp{\varepsilon}\gnl
          \gwmu 3\gnl
          \gvac{1}\gob1{\CL}
            \gend
            =
            \gbeg 1{5}
            \got1{\CL}\gnl
            \gcl1\gnl
            \gcl1\gnl
            \gcl1\gnl
            \gob1{\CL}
            \gend
  \end{gather}}
  In (\ref{bydef2}), we identify $\cL$ with $\BB\diamond_{B}\CL$ and $\cL$ with $\cL\ot_{B}B$. If $\CL$ is a $B$-Hopf algebroid, then we have
              {\allowdisplaybreaks\begin{gather}
     \label{hopf1}
           \gbeg 5{8}
           \gvac1\gvac1\got1{\CL}\gnl
            \gvac1\gvac1\gcl1\gnl
           \gvac1\glmpb\gnot{[+][-]}\gcmpt\grmpb\gnl
           \gvac1\gcl1\gvac1\gcl1\gvac1\gnl
           \gvac1\gcl1\glmpb\gnot{+-}\gcmpt\grmpb\gnl
           \gvac1\gcl1\gcl1\gvac1\gcl1\gnl
            \gend=\gbeg 4{6}
            \gvac1\gvac1\got1{\CL}\gnl
            \gvac1\gwcm3\gnl
            \gvac1\gibbr31\gnl
            \glmpb\gnot{[+][-]}\gcmpt\grmpb\gcl1\gnl
            \gcl1\gvac1\gcl1\gcl1\gnl
            \gend
             \\\notag\\
     \label{hopf2}
            \gbeg 5{8}
           \gvac1\gvac1\got1{\CL}\gnl
            \gvac1\gvac1\gcl1\gnl
           \gvac1\glmpb\gnot{+-}\gcmpt\grmpb\gnl
           \gvac1\gcl1\gvac1\gcl1\gvac1\gnl
           \gvac1\gcl1\glmpb\gnot{[+][-]}\gcmpt\grmpb\gnl
           \gvac1\gcl1\gcl1\gvac1\gcl1\gnl
            \gend=\gbeg 4{6}
            \gvac1\gvac1\got1{\CL}\gnl
            \gvac1\gwcm3\gnl
            \glmpb\gnot{+-}\gcmpt\grmpb\gcl1\gnl
            \gcl1\gvac1\gcl1\gcl1\gnl
            \gend
             \\\notag\\
     \label{hopf3}
           \gbeg 5{7}
           \gvac1\gvac1\got1{\CL}\gnl
            \gvac1\gvac1\gcl1\gnl
           \gvac1\glmpb\gnot{[+][-]}\gcmpt\grmpb\gnl
           \gvac1\gcl1\gvac1\gcl1\gvac1\gnl
           \glmpb\gnot{+-}\gcmpt\grmpb\gcl1\gnl
           \gcl1\gvac1\gbr\gnl
            \gend= \gbeg 4{7}
           \gvac1\gvac1\got1{\CL}\gnl
            \gvac1\gvac1\gcl1\gnl
           \gvac1\glmpb\gnot{+-}\gcmpt\grmpb\gnl
           \gvac1\gcl1\gvac1\gcl1\gvac1\gnl
           \glmpb\gnot{[+][-]}\gcmpt\grmpb\gcl1\gnl
           \gcl1\gvac1\gcl1\gcl1\gnl
            \gend
  \end{gather}}
\end{Prop}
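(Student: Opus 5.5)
The plan is to follow \cite[Prop.~3.7]{schau1}. Everything comes from three facts: $\lambda$ and $\mu$ are bijective; their inverses are expressed through the (anti-)translation maps as in the displayed formulas for $\lambda^{-1}$ and $\mu^{-1}$; and $\lambda$, $\mu$ are compatible with the left $B^e$-module, algebra and coalgebra structures. To prove an identity between two maps with values in $\CL\ot_{\BB}\CL$ (resp. $\CL\ot_B\CL$), I will compose both sides with $\lambda$ (resp. $\mu$), or with $\lambda\diamond_B\id$, $\id\diamond_B\lambda$ when there are three tensorands, and check equality after composition. Injectivity then gives the claim.

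\textbf{Left Hopf identities.} The first identity in (\ref{bydef1}) is just $\lambda\circ\lambda^{-1}=\id$ restricted to $\CL\diamond_B 1$. The second is $\lambda^{-1}\circ\lambda=\id$ restricted to $\CL\ot_{\BB}1$.

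For (\ref{unittrans}), apply $\lambda$ to $b\ot 1$ and to $1\ot\ol b$. Since $\Delta$ is a left $B^e$-module map, $\Delta(s(b))=b\ot 1$ and $\Delta(t(\ol b))=1\ot\ol b$. With the $\BB$-balancing in $\CL\diamond_B\CL$ one gets $\lambda(b\ot1)=b\ot1$ and $\lambda(1\ot\ol b)=1\ot \ol b$.

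For (\ref{transnabla}), apply $\lambda$ to both sides.
\begin{itemize}
\item In the first identity, use $\Delta(\ol b X)=X\on\ot \ol b X\t$ up to the braiding. The extra braiding in the picture comes from the $B^e$-action of (\ref{eq:rbgd.bimod}).
\item In the second identity, use multiplicativity of $\Delta$ together with (\ref{bydef1}) applied twice. Naturality of the braiding moves $Y_+$ past $X_-$.
\end{itemize}

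The colinearity relations (\ref{transleftcolinear}) are obtained by applying $\id\diamond_B\lambda$ (resp. $\lambda\ot_{\BB}\id$) and using coassociativity. Here I have to be careful that the maps $\alpha,\alpha'$ into the triple Takeuchi product are used correctly; since we are working with quotients $\diamond_B$ rather than $\times_B$, this is harmless.

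For (\ref{transvanish}), compose (\ref{bydef1}) with $\varepsilon\diamond_B\id$ followed by multiplication, and use that $\varepsilon$ is a left character. The second equation follows by applying $\id\ot\ol\varepsilon$ to $\lambda^{-1}\lambda$.

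\textbf{Anti-left Hopf identities.} The identities (\ref{bydef2})--(\ref{antitransvanish}) are proved the same way with $\mu$ in place of $\lambda$. Braidings now appear through the definition of $\mu$, and they are handled by naturality of $\tau$ and by the already verified formula for $\mu^{-1}$. For (\ref{antitransnabla}) the braiding inverse appears because $\mu$ carries $\tau$ while $\mu^{-1}$ carries $\tau^{-1}$.

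\textbf{Mixed identities.} For (\ref{hopf1})--(\ref{hopf3}), apply the appropriate canonical map to one leg: $\id\ot_B\lambda$ for (\ref{hopf1}) and $\lambda\ot_B\id$ for (\ref{hopf2}). Reduce using (\ref{bydef1}), (\ref{bydef2}) and the colinearity identities (\ref{transleftcolinear}), (\ref{antitransleftcolinear}). For (\ref{hopf3}), apply $\mu$ and then $\lambda$ to the relevant legs and reduce both sides to $X\on\ot X\t$-type expressions.

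\textbf{Main obstacle.} I expect the difficulty to be well-definedness rather than the algebra. Maps like $\id\ot_B\lambda$ must be well defined on the relevant balanced tensor products, and injectivity of $\id\ot\lambda$ on them needs justification. Since $\lambda$ is bijective and a module map over the appropriate side, $\id\ot\lambda$ is bijective as well, with inverse $\id\ot\lambda^{-1}$. The braidings must be tracked so that every intermediate expression lives in the right quotient. I would settle this first for (\ref{transleftcolinear}) and (\ref{hopf3}), where the most braidings interact.
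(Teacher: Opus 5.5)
Your overall strategy matches the paper's, following \cite[Prop.~3.7]{schau1}: test each identity after applying $\lambda$, $\mu$ or a three-leg extension of them, then conclude by injectivity. Most of your reductions agree with it. The step that fails as written is the second identity of (\ref{transleftcolinear}), which is the first thing the paper proves in detail.

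Elementwise it reads $X_{++}\ot X_{-\alpha}\ot X_{+-}{}^{\alpha}=X_{+}\ot X_{-(1)}\ot X_{-(2)}$. On both sides, the legs balanced over $\BB$ against each other are the first and the third. On the left this comes from the inner translation map of $X_{+}$. On the right it holds because $\Delta(\bar b Y)=Y_{(1)\alpha}\ot\overline{b^{\alpha}}Y_{(2)}$ moves the left $\BB$-action on $X_{-}$ onto $X_{-(2)}$. Legs two and three are tied by a diamond relation, and legs one and two by nothing. The common target is therefore $\int_{c,d}\cL_{\bar c}\ot{}_{\bar d}\cL\ot{}_{\bar c,d}\cL$.

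So $\lambda\ot_{\BB}\id$ on the first two legs is not defined there. Even formally, it would not produce the product $X_{++(2)}X_{+-}$ that (\ref{bydef1}) cancels. What is needed is the paper's braided map on the non-adjacent legs, $\lambda_{1,3}(X\ot Y\ot Z)=X_{(1)}\ot Y_{\alpha}\ot X_{(2)}{}^{\alpha}Z$. Formally this is $\lambda$ on legs one and two, conjugated by the braiding of legs two and three. One checks that it factors through all the balanced products, with values in $\int_{c,d}{}_{\bar c}\cL_{\bar d}\ot{}_{\bar d}\cL\ot{}_{c}\cL$, and then composes with $\lambda\ot\id$. The left side collapses to $X\ot 1\ot 1$ by (\ref{bydef1}) used twice. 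The right side collapses likewise by coassociativity, multiplicativity of $\Delta$ and (\ref{bydef1}).

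Your ``main obstacle'' paragraph points at this issue. However, your bijectivity argument for $\id\ot\lambda$ only covers adjacent legs, and the map you name is the wrong one. This matters beyond the identity itself: reducing $(\id\ot\mu)$ of the right side of (\ref{hopf2}), as the paper does, uses exactly this identity.

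There is also a slip in the second identity of (\ref{unittrans}). The translation map of $\bar b$ is $\lambda^{-1}(\bar b\ot 1)$ with $\bar b\ot 1\in\cL\diamond_B\cL$. The relation defining $\diamond_B$ gives $\bar b\ot 1=1\ot b$, where $b=\overline{\bar b}$ enters through the source map. Since $\lambda(1\ot b)=1\ot b$, one gets $\bar b_{+}\ot\bar b_{-}=1\ot b$; this is why $\overline{\bullet}$ appears in the picture. Your computation $\lambda(1\ot\bar b)=1\ot\bar b$ is correct, but it determines $\lambda^{-1}(1\ot\bar b)$ rather than $\lambda^{-1}(\bar b\ot 1)$. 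It would lead to the target image $1\ot\bar b$ instead of the stated formula.

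A smaller point concerns (\ref{transvanish}). The left-character property $\varepsilon(XY)=\varepsilon(X\overline{\varepsilon(Y)})$ is not needed for $X_+X_-=\varepsilon(X)$; counitality suffices there. It is, however, what makes the second identity work, for instance through $(\id\diamond_B\varepsilon)\circ\lambda(U\ot V)=U\overline{\varepsilon(V)}$ evaluated at $\lambda^{-1}(X\ot 1)$.
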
  
\begin{proof}
We only show the non-trivial part. The proof for the second equation of (\ref{transnabla}) is similar to (\ref{antitransnabla}) which will be shown later. For the second equation of (\ref{transleftcolinear}), we observe that the target of the map is $\int_{c,d}{}_{}\cL_{\Bar{c}}\ot {}_{\Bar{d}}\cL_{}\ot {}_{\Bar{c},d}\cL$. Define map $\lambda_{1,3}:\int_{c,d}{}_{}\cL_{\Bar{c}}\ot {}_{\Bar{d}}\cL_{}\ot {}_{\Bar{c},d}\cL\to \int_{c,d}{}_{\Bar{c}}\cL_{\Bar{d}}\ot {}_{\Bar{d}}\cL_{}\ot {}_{c}\cL$ 
 by 
 $$\lambda_{1,3}:=\gbeg5{6}\got5{\int_{c,d}{}_{}\cL_{\Bar{c}}\ot {}_{\Bar{d}}\cL_{}\ot {}_{\Bar{c},d}\cL}\gnl
 \gcmu\gvac1\gcl1\gvac1\gcl1\gnl
 \gcl1\gbbr31\gvac1\gcl1\gnl
 \gcl1\gcl1\gvac1\gwmu3\gnl
 \gcl1\gcn1113\gvac1\gvac1\gcl1\gnl
 \gob3{\int_{c,d}{}_{\Bar{c}}\cL_{\Bar{d}}\ot {}_{\Bar{d}}\cL_{}\ot {}_{c}\cL}
 \gend
 $$
 We can check that $\lambda_{1,3}$ factors through all the balanced tensor products. Also, we can define a map $\lambda\ot \id:\int_{c,d}{}_{\Bar{c}}\cL_{\Bar{d}}\ot {}_{\Bar{d}}\cL_{}\ot {}_{c}\cL\to \cL\diamond_{B}\cL\diamond_{B}\cL$. Apply $(\lambda\ot \id)\circ\lambda_{1,3}$ on the left hand side of the second equation of (\ref{transleftcolinear}), we get
 $$\gbeg 5{10}
           \gvac1\gvac1\gvac1\got1{\CL}\gnl
            \gvac1\gvac1\gvac1\gcl1\gnl
           \gvac1\gvac1\glmpb\gnot{+-}\gcmpt\grmpb\gnl
           \gvac1\gvac1\gcl1\gvac1\gcl1\gvac1\gnl
           \gvac1\glmpb\gnot{+-}\gcmpt\grmpb\gcl1\gnl
           \gvac1\gcl1\gvac1\gbr\gnl
           \gwcm3\gcl1\gcl1\gnl
           \gcn1112\gvac1\gbr\gcl1\gnl
           \gcmu\gcl1\gmu\gnl
           \gcl1\gmu\gcn2122\gnl
            \gend=\gbeg 4{7}
        \got 3{\CL}\gnl
            \gvac1\gcl1\gnl
            \glmpb\gnot{+-}\gcmpt\grmpb\gnl
            \gcn1112\gvac1\gcl1\gnl
            \gcmu\gcl1\gnl
            \gcl1\gmu\gnl
            \gob2{\CL\diamond_{B}\CL}
            \gend=\gbeg 3{3}
        \got 3{\CL\diamond_{B}B}\gnl
            \gcl1\gvac1\gcl1\gnl
            \gob3{\CL\diamond_{B}\CL}
            \gend.$$
  Applying $(\lambda\ot \id)\circ\lambda_{1,3}$ on the right hand side of the second equation of (\ref{transleftcolinear}), we get  
$$\gbeg 5{10}
           \gvac1\gvac1\got1{\CL}\gnl
             \gvac1\gvac1\gcl1\gnl
           \gvac1\glmpb\gnot{+-}\gcmpt\grmpb\gnl
           \gvac1\gcl1\gvac1\gcl1\gnl
           \gvac1\gcl1\gwcm3\gnl
           \gvac1\gcl1\gcn2113\gcl1\gnl
           \gwcm3\gcl1\gcl1\gnl
           \gcn1112\gvac1\gbr\gcl1\gnl
           \gcmu\gcl1\gmu\gnl
           \gcl1\gmu\gcn2122\gnl
            \gend=\gbeg 5{7}
           \gvac1\gvac1\got1{\CL}\gnl
             \gvac1\gvac1\gcl1\gnl
           \gvac1\glmpb\gnot{+-}\gcmpt\grmpb\gnl
           \gvac1\gcl1\gvac1\gcl1\gnl
           \gwcm3\gcl1\gnl
           \gcl1\gvac1\gmu\gnl
           \gcl1\gvac1\gcn2122\gnl
           \gcl1\gvac1\gcmu\gnl
            \gend=\gbeg 3{3}
        \got 3{\CL\diamond_{B}B}\gnl
            \gcl1\gvac1\gcl1\gnl
            \gob3{\CL\diamond_{B}\CL}
            \gend. $$
            For the 2nd equation of  (\ref{antitransnabla}), applying $\mu$ on the right hand side, we have
             \begin{multline*} \gbeg 4{10}
            \got1{\CL}\gvac2\got1{\CL}\gnl
            \gcl1\gvac2\gcl1\gnl
            \gdnot{[+][-]}\glmptb\grmpb\gdnot{[+][-]}\glmpb\grmptb\gnl
            \gcl1\gibr\gcl1\gnl
            \gmu\gibr\gnl
            \gcn2122\gmu\gnl
            \gcmu\gcn2121\gnl
\gcl1\gbr\gnl
\gmu\gcl1\gnl
\gob3{\CL\diamond_{B}\CL}
            \gend=\gbeg 5{10}
            \gvac1\got1{\CL}\gvac1\gvac1\got1{\CL}\gnl
            \gvac1\gcl1\gvac1\gvac1\gcl1\gnl
            \glmpb\gnot{[+][-]}\gcmpt\grmpb\gvac1\gcl1\gnl
            \gcl1\gvac1\gibbr31\gnl
            \gcl1\glmpb\gnot{[+][-]}\gcmpt\grmpb\gcl1\gnl
            \gmu\gvac1\gmu\gnl
            \gwcm2\gvac1\gcn3121\gnl
            \gcl1\gbbr31\gnl
            \gmu\gvac1\gcl1\gnl
            \gvac1\gob2{\CL\,\diamond_{B}\,\,\CL}
            \gend=\gbeg 5{14}
            \gvac1\got1{\CL}\gvac1\gvac1\got1{\CL}\gnl
            \gvac1\gcl1\gvac1\gvac1\gcl1\gnl
            \glmpb\gnot{[+][-]}\gcmpt\grmpb\gvac1\gcl1\gnl
            \gcl1\gvac1\gibbr31\gnl
            \gcl1\glmpb\gnot{[+][-]}\gcmpt\grmpb\gcl1\gnl
            \gcl1\gcn2113\gmu\gnl
            \gcn2112\gcn1112\gcn2123\gnl
            \gcmu\gcmu\gcl1\gnl
\gcl1\gbr\gcl1\gcl1\gnl
\gmu\gmu\gcl1\gnl
\gcn2122\gcn1123\gvac1\gcl1\gnl
\gcn2122\gvac1\gbr\gnl
\gwmuh427\gcl1\gnl
           \gvac1\gvac1\gob2{\CL\,\,\diamond_{B}\,\,\CL}
            \gend=\gbeg 5{14}
            \gvac1\got1{\CL}\gvac1\gvac1\got1{\CL}\gnl
            \gvac1\gcl1\gvac1\gvac1\gcl1\gnl
            \glmpb\gnot{[+][-]}\gcmpt\grmpb\gvac1\gcl1\gnl
            \gcl1\gvac1\gibbr31\gnl
            \gcl1\glmpb\gnot{[+][-]}\gcmpt\grmpb\gcl1\gnl
            \gcl1\gcn2113\gmu\gnl
            \gcn2112\gcn1112\gcn2123\gnl
            \gcmu\gcmu\gcl1\gnl
             \gcl1\gcl1\gcl1\gbr\gnl
             \gcl1\gcl1\gmu\gcl1\gnl
             \gcl1\gcl1\gcn1121\gvac1\gcl1\gnl
             \gcl1\gbr\gvac1\gcl1\gnl
             \gmu\gwmuh415\gnl
          \gvac1\gob3{\CL\,\,\diamond_{B}\,\,\CL}
            \gend\\
            \gbeg 5{15}
            \gvac1\got1{\CL}\gvac1\gvac1\got1{\CL}\gnl
            \gvac1\gcl1\gvac1\gvac1\gcl1\gnl
            \glmpb\gnot{[+][-]}\gcmpt\grmpb\gvac1\gcl1\gnl
            \gcl1\gvac1\gibbr31\gnl
            \gcl1\glmpb\gnot{[+][-]}\gcmpt\grmpb\gcl1\gnl
            \gcl1\gcn1112\gvac1\gcl1\gcl1\gnl
            \gcl1\gcmu\gcl1\gcl1\gnl
            \gcl1\gcl1\gbr\gcl1\gnl
            \gcl1\gmu\gbr\gnl
            \gcn1112\gcn2123\gcl1\gcl1\gnl
            \gcmu\gmu\gcl1\gnl
            \gcl1\gcn1111\gcn2121\gcl1\gnl
            \gcn1111\gbr\gvac1\gcl1\gnl
            \gmu\gwmu3\gnl
          \gvac1\gob2{\CL\,\,\diamond_{B}\,\,\CL}
            \gend=\gbeg 5{8}
        \got 3{\CL}\got1{\CL}\gnl
            \gvac1\gcl1\gvac1\gcl5\gnl
            \glmpb\gnot{[+][-]}\gcmpt\grmpb\gnl
            \gcn1112\gvac1\gcl1\gnl
            \gcmu\gcl1\gnl
            \gcl1\gbr\gnl
            \gmu\gmu\gnl
            \gvac1\gob2{\CL\diamond_{B}\CL}
            \gend= \gbeg5{4} 
  \gvac1\got2{\BB\diamond_{B}\CL\ot\CL}\gnl
  \gcl1\gvac1\gmu\gnl
  \gcl1\gvac1\gcn2122\gnl
 \gob3{\CL\,\,\diamond_{B}\,\,\,\CL}
\gend 
            \end{multline*}
which is equal to applying $\mu$ on the left hand side. For the first equation of (\ref{antitransleftcolinear}), we note that the target of the map is $\int_{a,b}{}_{\ol a}\cL_{b}\ot {}_{b}\cL\ot {}_{a}\cL$. Apply $\mu\diamond_{B}\id$ on the right hand side, we have       
$$\gbeg 5{11}
             \gvac1\gvac1\gvac1\got1{\CL}\gnl
             \gvac1\gvac1\gvac1\gcl1\gnl
           \gvac1\gvac1\glmpb\gnot{[+][-]}\gcmpt\grmpb\gnl
           \gvac1\gvac1\gcl1\gvac1\gcl1\gnl
           \gvac1\gwcm3\gcl1\gnl
           \gvac1\gcl1\gvac1\gbr\gnl
           \gvac1\gcl1\gvac1\gcl1\gcn2113\gnl
           \gwcm3\gcl1\gvac1\gcl1\gnl
           \gcl1\gvac1\gbr\gvac1\gcl1\gnl
           \gwmu3\gcl1\gvac1\gcl1\gnl
           \gvac1\gvac1\gvac1\gob1{\cL\diamond_{B}\cL\diamond_{B}\CL}
            \gend=\gbeg 4{9}
        \got 3{\CL}\gnl
            \gvac1\gcl1\gnl
            \glmpb\gnot{[+][-]}\gcmpt\grmpb\gnl
            \gcn1112\gvac1\gcl1\gnl
            \gcmu\gcl1\gnl
            \gcl1\gbr\gnl
            \gmu\gcl1\gnl
            \gcn1122\gwcm3\gnl
            \gvac1\gvac1\gob1{\cL\diamond_{B}\cL\diamond_{B}\CL}
            \gend=\gbeg4{4} \gvac1\got1{\BB\diamond_{B}\CL}\gnl
          \gcl2\gwcm3\gnl
          \gvac1\gcl1\gvac1\gcl1\gnl
          \gvac1\gvac1\gob1{\cL\diamond_{B}\cL\diamond_{B}\CL}
 \gend,$$
 which is equal to $\mu\diamond_{B}\id$ applying on the left hand side. For (\ref{antitransvanish}), we have
 $$\gbeg 4{7}
           \gvac1\got1{\CL}\gnl
           \gvac1\gcl1\gnl
           \glmpb\gnot{[+][-]}\gcmpt\grmpb\gnl
          \gcl1\gvac1\gcl1\gnl
          \gcl1\gvac1\gmp{\varepsilon}\gnl
          \gwmu 3\gnl
          \gvac{1}\gob1{\CL}
            \gend=\gbeg 4{9}
           \gvac1\got1{\CL}\gnl
           \gvac1\gcl1\gnl
           \glmpb\gnot{[+][-]}\gcmpt\grmpb\gnl
           \gcn2112\gcl1\gnl
          \gcmu\gcl1\gnl
          \gmp{\varepsilon}\gcl1\gmp{\varepsilon}\gnl
          \gmu\gcl1\gnl
          \gwmuh425 \gnl
          \gvac{1}\gob2{\CL}
            \gend
            =\gbeg 4{8}
           \gvac1\got1{\CL}\gnl
           \gvac1\gcl1\gnl
           \glmpb\gnot{[+][-]}\gcmpt\grmpb\gnl
           \gcn2112\gcl1\gnl
          \gcmu\gmp{\varepsilon}\gnl
          \gmp{\varepsilon}\grm\gnl
          \gmu\gnl
          \gob2{\CL}
            \gend=\gbeg 4{11}
           \gvac1\got1{\CL}\gnl
           \gvac1\gcl1\gnl
           \glmpb\gnot{[+][-]}\gcmpt\grmpb\gnl
           \gcn2112\gcl1\gnl
          \gcmu\gmp{\ol \varepsilon}\gnl
          \gcl1\gbr\gnl
          \gmu\gcl1\gnl
          \gcn2121\gcl1\gnl
          \gmp{\varepsilon}\gvac1\gcl1\gnl
          \gwmu3\gnl
          \gob3{\CL}
            \gend=\gbeg 4{11}
           \gvac1\got1{\CL}\gnl
           \gvac1\gcl1\gnl
           \glmpb\gnot{[+][-]}\gcmpt\grmpb\gnl
           \gcn2112\gcl1\gnl
          \gcmu\gcl1\gnl
          \gcl1\gbr\gnl
          \gmu\gcl1\gnl
          \gcn2121\gcl1\gnl
          \gmp{\varepsilon}\gvac1\gcl1\gnl
          \gwmu3\gnl
          \gob3{\CL}
            \gend=\id_{\CL}$$.
 For (\ref{hopf1}), apply $\id\ot \lambda$ on the right hand side, we have
 $$\gbeg 4{7}
            \gvac1\gvac1\got1{\CL}\gnl
            \gvac1\gwcm3\gnl
            \gvac1\gibbr31\gnl
            \glmpb\gnot{[+][-]}\gcmpt\grmpb\gcl1\gnl
            \gcl1\gcn2132\gcl1\gnl
            \gcl1\gcmu\gcl1\gnl
            \gcl1\gcl1\gmu\gnl
            \gend=\gbeg 5{7}
            \gvac1\gvac1\gvac1\got1{\CL}\gnl
            \gvac1\gvac1\gwcm3\gnl
            \gvac1\gvac1\gibbr31\gnl
           \gvac1\glmpb\gnot{[+][-]}\gcmpt\grmpb\gcl2\gnl
           \gvac1\gcl1\gvac1\gcl1\gvac1\gnl
           \glmpb\gnot{[+][-]}\gcmpt\grmpb\gmu\gnl
           \gcl1\gvac1\gcl1\gcn2122\gnl
            \gend=\gbeg 33\got3{\CL}\gnl
            \gvac1\gcl1\gnl
            \glmpb\gnot{[+][-]}\gcmpt\grmpb\gnl
            \gcl1\gvac1\gcl1\gnl
            \gend,$$
            which is equal to $\id\ot \lambda$ applying on the left hand side. We can prove (\ref{hopf2})
 by comparing the result of $\id_{\cL}\ot \mu$ on both sides. Here the first step use (\ref{antitransleftcolinear}). For (\ref{hopf3}), apply $\mu\ot\id$ on the left hand side, we have
 $$  \gbeg 5{9}
           \gvac1\gvac1\gvac1\got1{\CL}\gnl
            \gvac1\gvac1\gvac1\gcl1\gnl
           \gvac1\gvac1\glmpb\gnot{[+][-]}\gcmpt\grmpb\gnl
           \gvac1\gvac1\gcl1\gvac1\gcl1\gvac1\gnl
           \gvac1\glmpb\gnot{+-}\gcmpt\grmpb\gcl1\gnl
           \gvac1\gcl1\gvac1\gbr\gnl
           \gwcm3\gcl1\gcl1\gnl
           \gcl1\gvac1\gbr\gcl1\gnl
           \gwmu3\gcl1\gcl1\gnl
            \gend=\gbeg 5{9}
           \gvac1\gvac1\got1{\CL}\gnl
            \gvac1\gvac1\gcl1\gnl
           \gvac1\glmpb\gnot{[+][-]}\gcmpt\grmpb\gnl
           \gvac1\gcl1\gvac1\gcl1\gvac1\gnl
           \gwcm3\gcn1113\gnl
            \gcl1\glmpb\gnot{+-}\gcmpt\grmpb\gcl1\gnl
            \gcl1\gcl1\gvac1\gbr\gnl
            \gcl1\gbbr31\gcl1\gnl
            \gmu\gvac1\gcl1\gcl1\gnl
            \gend=\gbeg 5{9}
           \gvac1\gvac1\got1{\CL}\gnl
            \gvac1\gvac1\gcl1\gnl
           \gvac1\glmpb\gnot{[+][-]}\gcmpt\grmpb\gnl
           \gvac1\gcl1\gvac1\gcl1\gvac1\gnl
           \gwcm3\gcn1111\gnl
           \gcl1\gvac1\gbr\gnl
           \gcl1\gvac1\gcl1\gcn1113\gnl
           \gwmu3\glmpb\gnot{+-}\gcmpt\grmpb\gnl
           \gvac1\gcl1\gvac1\gcl1\gvac1\gcl1\gnl
            \gend=\gbeg 33\got3{\CL}\gnl
            \gvac1\gcl1\gnl
            \glmpb\gnot{+-}\gcmpt\grmpb\gnl
            \gcl1\gvac1\gcl1\gnl
            \gend,$$
            which is equal to $\mu\ot\id$ applying on the right hand side. Here the first step uses (\ref{transleftcolinear}).
\end{proof}
\begin{Def}
   Given a $B$-Hopf algebroid $\cL$ in $\B$, a Hopf ideal $\cI$ of $\cL$ is  a $B^{e}$-subbimodule of $\cL$ in $\B$ such that
\begin{itemize}
    \item [(i)] $\cI$ is an ideal of $\cL$.
    \item[(ii)] $\cI$ is a coideal of $\cL$, i.e. $\Delta(\cI)\subseteq \cI\diamond_{B}\cL+\cL\diamond_{B}\cI$ and $\varepsilon(\cI)=0$.
    \item[(iii)] For all $j\in\cI$,
    \[j_{+}\ot_{\overline{B}}j_{-}\in \cI\ot_{\overline{B}}\cL+\cL\ot_{\overline{B}}\cI. \]
     \item[(iv)] For all $j\in\cI$,
    \[j_{[+]}\ot_{B}j_{[-]}\in \cI\ot_{B}\cL+\cL\ot_{B}\cI. \]
\end{itemize}
\end{Def}
It is similar to \cite{Gho}, we have
\begin{Prop}\label{prop. quotient Hopf algebroids}
    If $\cL$ is a $B$-Hopf algebroid in $\B$ and $\cI$ is a Hopf ideal of $\cL$ then $\cL/\cI$ is a $B$-Hopf algebroid in $\B$.
\end{Prop}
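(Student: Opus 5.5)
We transport every piece of structure from $\cL$ to $\cL/\cI$ through the quotient map $\pi:\cL\to\cL/\cI$, which is a morphism in $\B$ because $\B$ has coequalizers.

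\textbf{Ring structure.} By (i), $\cL/\cI$ is an algebra in $\B$. Since $\cI$ is a $B^e$-subbimodule, $\pi\circ\eta:B\ot\BB\to\cL/\cI$ is an algebra map, so $\cL/\cI$ is a $B^e$-ring.

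\textbf{Counit.} Because $\varepsilon(\cI)=0$, $\varepsilon$ descends to $\bar\varepsilon:\cL/\cI\to B$. The left character identities $\bar\varepsilon(XY)=\bar\varepsilon(X\bar\varepsilon(Y))=\bar\varepsilon(X\ol{\bar\varepsilon(Y)})$ hold on representatives, hence on $\cL/\cI$.

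\textbf{Coproduct.} Consider $(\pi\diamond_B\pi)\circ\Delta:\cL\to (\cL/\cI)\diamond_B(\cL/\cI)$. The map $\pi\diamond_B\pi$ is surjective, and its kernel contains $\cI\diamond_B\cL+\cL\diamond_B\cI$ by right exactness of the coequalizer $\diamond_B$. Condition (ii) then shows that the composite kills $\cI$, so it descends to $\bar\Delta$. Because $\Delta$ lands in $\cL\times_B\cL$ and $\pi\diamond_B\pi$ is $B^e$-linear for the structures defining the Takeuchi condition, $\bar\Delta$ lands in $(\cL/\cI)\times_B(\cL/\cI)$.

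Coassociativity, counitality and multiplicativity of $\bar\Delta$ are identities between maps out of $\cL/\cI$. Precomposing with the epimorphism $\pi$ reduces each to the corresponding identity for $\cL$, pushed forward along $\pi\times_B\pi$ or $\pi\times_B\pi\times_B\pi$. Multiplicativity uses the product on $P\times_BP$ from the earlier Lemma, which is compatible with $\pi\times_B\pi$ because $\pi$ is an algebra map.

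\textbf{Galois maps.} This is the main point. For $\bar\lambda:\cL/\cI\ot_{\BB}\cL/\cI\to\cL/\cI\diamond_B\cL/\cI$ we have the commuting square $\bar\lambda\circ(\pi\ot_{\BB}\pi)=(\pi\diamond_B\pi)\circ\lambda$.

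\emph{Surjectivity.} The map $\lambda$ is bijective and $\pi\diamond_B\pi$ is surjective, so $\bar\lambda$ is surjective.

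\emph{Injectivity.} Define the candidate inverse on generators $[X]\ot[Y]$ by
\[\bar\lambda^{-1}([X]\ot[Y])=[X_+]\ot[X_-Y],\]
which is induced by $(\pi\ot_{\BB}\pi)\circ\lambda^{-1}$. We must check that $\lambda^{-1}$ sends $\ker(\pi\diamond_B\pi)=\cI\diamond_B\cL+\cL\diamond_B\cI$ into $\cI\ot_{\BB}\cL+\cL\ot_{\BB}\cI$.
\begin{itemize}
\item For the summand $X\ot j$ with $j\in\cI$: $\lambda^{-1}(X\ot j)=X_+\ot X_-j\in\cL\ot_{\BB}\cI$, since $\cI$ is an ideal.
\item For the summand $j\ot Y$ with $j\in\cI$: $\lambda^{-1}(j\ot Y)=j_+\ot j_-Y$, which lies in $\cI\ot_{\BB}\cL+\cL\ot_{\BB}\cI$ by (iii) together with the ideal property.
\end{itemize}
Thus $\bar\lambda^{-1}$ is well defined. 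It is a two-sided inverse of $\bar\lambda$ because both compositions, precomposed with the relevant epimorphism $\pi\ot\pi$, reduce to the identities $\lambda\lambda^{-1}=\id$ and $\lambda^{-1}\lambda=\id$.

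\textbf{Anti-Galois map.} The same argument applies to $\bar\mu$, using the formula for $\mu^{-1}$ in terms of $[+]\ot[-]$, condition (iv), and the fact that braidings preserve subobjects: $\tau(\cI\ot\cL)=\cL\ot\cI$ as subobjects.

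The main obstacle I expect is the exactness bookkeeping for the balanced products. One needs $\ker(\pi\diamond_B\pi)=\cI\diamond_B\cL+\cL\diamond_B\cI$, with these summands read as their images, and similarly for $\ot_{\BB}$ and $\ot_B$. This follows from right exactness of tensor products over a field, followed by passage to coequalizers, and only the inclusion "$\supseteq$ plus surjectivity" is actually needed.
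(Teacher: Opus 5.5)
Your proposal is correct. It is essentially the argument the paper intends: the paper gives no proof of its own beyond saying the result is similar to \cite{Gho}, and your route of descending $\Delta$ and $\varepsilon$ through the coideal conditions, then showing that $(\pi\ot_{\BB}\pi)\circ\lambda^{-1}$ and $(\pi\ot_B\pi)\circ\mu^{-1}$ kill $\ker(\pi\diamond_B\pi)$ by (iii), (iv) and the ideal property, is that standard argument transported to $\B$.

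One small correction to your closing remark: in the injectivity step you need the nontrivial inclusion $\ker(\pi\diamond_B\pi)\subseteq\cI\diamond_B\cL+\cL\diamond_B\cI$, which comes from right exactness. The trivial inclusion suffices only on the target side, $\ot_{\BB}$ (resp.\ $\ot_B$).
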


\begin{Expl}
For a braided Hopf algebra $H$ in $\B$ \cite{Majid1994,schau2,schau3}, we will use
$$S=\gbeg15\got1H\gnl\gcl1\gnl\gmp{S}\gnl\gcl1\gnl\gob1H\gend\qquad\text{ and
}\qquad
S\inv=\gbeg15\got1H\gnl\gcl1\gnl\gmp{\hat{S}}\gnl\gcl1\gnl\gob1H\gend$$
for the antipode  and its inverse in case $S$ is bijective. We note that the antipode is an
anti-coalgebra map and anti-algebra map in the sense that
\begin{align*}
  \gbeg 34
  \got 3H\gnl
  \gvac 1\gmp S\gnl
  \gwcm 3\gnl
  \gob 1H\gvac 1\gob 1H
  \gend
  &=
  \gbeg 25
  \got 2H\gnl
  \gcmu\gnl
  \gbr\gnl
  \gmp S\gmp S\gnl
  \gob 1H\gob 1H
  \gend
&\text{and}&&
  \gbeg 34
  \got 1H\gvac 1\got 1H\gnl
  \gwmu 3\gnl
  \gvac 1\gmp S\gnl
  \gob 3H
  \gend
  &=
  \gbeg 25
  \got 1H\got 1H\gnl
  \gmp S\gmp S\gnl
  \gbr\gnl
  \gmu\gnl
  \gob 2H
  \gend.
\end{align*}
If the antipode is bijective, we have
$$\gbeg2{6}\got2{H}\gnl
\gcmu\gnl
\gibr\gnl
\gmp{\hat{S}}\gcl1\gnl
\gmu\gnl
\gob2{H}\gend=\gbeg1{4}\got1H\gnl\gcu1\gnl
\gu1\gnl\gob1H\gend=\gbeg2{6}\got2{H}\gnl
\gcmu\gnl
\gibr\gnl
\gcl1\gmp{\hat{S}}\gnl
\gmu\gnl
\gob2{H}\gend.$$
For $H$ with bijective antipode, we have
$$\gbeg 45
            \got 3{H}\gnl
            \gvac1\gcl1\gnl
            \glmpb\gnot{+-}\gcmpt\grmpb\gnl
            \gcl1\gvac1\gcl1\gnl
            \gob1H\gvac{1}\gob1H\gnl
            \gend=\gbeg2{4}\got2{H}\gnl
            \gcmu\gnl
            \gcl1\gmp{S}\gnl
            \gob1{H}\gob1{H}\gnl
            \gend
            \qquad\textup{and}\qquad
            \gbeg 45
            \got 3{H}\gnl
            \gvac1\gcl1\gnl
            \glmpb\gnot{[+][-]}\gcmpt\grmpb\gnl
            \gcl1\gvac1\gcl1\gnl
             \gob1H\gvac{1}\gob1H
            \gend=\gbeg2{5}\got2{H}\gnl
            \gcmu\gnl
            \gmp{\hat{S}}\gcl1\gnl
            \gibr\gnl
            \gob1{H}\gob1{H}\gnl
            \gend.$$
\end{Expl}
% \rosso{In particular, every left $H$-comodule $V$ is a right
% $H$-comodule, and, if $S$ is an isomorphism every left comodule
% $W$ is a right comodule with comodule structures
% \begin{equation}\label{switchover}
%   \gbeg25
%   \gvac1\got1V\gnl
%   \glcm\gnl
%   \gbr\gnl
%   \gcl1\gmp+\gnl
%   \gob1V\gob1H\gend
%   \qquad\text{ resp. }
%   \qquad
%   \gbeg25
%   \got1W\gnl
%   \grcm\gnl
%   \gibr\gnl
%   \gmp-\gcl1\gnl
%   \gob1H\gob1W
%   \gend
% \end{equation}

% A Yetter-Drinfeld module over a Hopf algebra $H$ in $\B$, as
% defined in the braided case by Bespalov \cite{Bes:CMQGBC} is a
% right $H$-comodule and right $H$-module $V$ satisfying
% \[\gbeg38
%   \got1V\got2H\gnl
%   \gcl1\gcmu\gnl
%   \gbr\gcl1\gnl
%   \gcl2\grm\gnl
%   \gvac1\grcm\gnl
%   \gbr\gcl1\gnl
%   \gcl1\gmu\gnl
%   \gob1V\gob2H
%   \gend
%   =
%   \gbeg45
%   \got1V\gvac1\got2H\gnl
%   \grcm\gcmu\gnl
%   \gcl1\gbr\gcl1\gnl
%   \grm\gmu\gnl
%   \gob1V\gvac1\gob2H
%   \gend \]
% In \cite{Bes:CMQGBC} it is shown in particular that if the
% antipode of $H$ is an isomorphism, then the category of
% Yetter-Drinfeld modules is braided, with the braiding and its
% inverse defined by
% \begin{equation*}
%   \sigma_{VW}=
%   \gbeg35
%   \got1V\got1W\gnl
%   \gcl1\grcm\gnl
%   \gbr\gcl1\gnl
%   \gcl1\grm\gnl
%   \gob1W\gob1V
%   \gend
%   \qquad
%   \text{ and }
%   \qquad
%   \sigma_{VW}\inv=
%   \gbeg37
%   \got1W\gvac1\got1V\gnl
%   \grcm\gcl1\gnl
%   \gcl3\gibr\gnl
%   \gvac1\gcl1\gmp-\gnl
%   \gvac1\grm\gnl
%   \gibr\gnl
%   \gob1V\gob1{W.}
%   \gend
% \end{equation*}
% In particular, $\sigma^{\pm 1}$ are $H$-colinear isomorphisms.}

\begin{Def}
Let $\CL$ be a left $B$-bialgebroid in $\B$. A right $\CL$-comodule is a $\BB$-bimodule $P$ together with a  map $\delta:P\to P\times_{B}\CL$ denoted by
$$\gbeg2{4}\got1P\gnl
\grcm\gnl
\gcl1\gcl1\gnl
\gob1{P\times_{B}\CL}
\gend$$
such that
\begin{itemize}
\item[(i)] $\delta$ is $\BB$-bilinear:
$$\gbeg4{6}\got1{\BB}\got1P\got1{\BB}\gnl
\glm\gcl1\gnl
\gvac1\grm\gnl
\gvac1\grcm\gnl
\gvac1\gcl1\gcl1\gnl
\gvac1\gob1{P\times_{B}\CL}
\gend=\gbeg4{7}\got1{\BB}\got1P\got1{\BB}\gnl
\gcl1\gcl1\gcn2113\gnl
\gcl1\grcm\gcl1\gnl
\gbr\gcl1\gcl1\gnl
\gcl1\glm\gcl1\gnl
\gcl1\gvac1\grm\gnl
\gvac1\gob1{P\times_{B}\CL}
\gend$$
\item[(ii)] $\alpha\circ (\delta\times_{B}\id_{\cL})\circ \delta=\alpha'\circ (\id_{P}\times_{B} \Delta)\circ \delta:P\to P\times_{B} \cL \times_{B} \cL$. Graphically,
$$\gbeg4{7}\got1P\gnl
\gcl1\gnl
\grcm\gnl
\gcl1\gcn2113\gnl
\grcm\gcl1\gnl
\gcl1\gcl1\gcl1\gnl
\gvac1\gob1{P\times_{B}\CL\times \CL}
\gend=\gbeg4{7}\got1P\gnl
\gcl1\gnl
\grcm\gnl
\gcl1\gcn2112\gnl
\gcl1\gcmu\gnl
\gcl1\gcl1\gcl1\gnl
\gvac1\gob1{P\times_{B}\CL\times \CL}
\gend$$
\item[(iii)] $(\id_{P}\diamond_{B} \varepsilon)\circ \delta=\id_{P}$. Graphically,
$$\gbeg4{6}\got1P\gnl
\grcm\gnl
\gcl1\gmp{\ol \varepsilon}\gnl
\gibr\gnl
\glm\gnl
\gvac1\gob1{P}
\gend=\gbeg4{5}\got1P\gnl
\gcl1\gnl
\gcl1\gnl
\gcl1\gnl
\gob1{P}
\gend$$
\end{itemize}
In addition, if $P$ is a $\BB$-ring such that
$$\gbeg3{5} 
  \got 1{P}\gvac 1\got 1{P}\gnl
  \gwmu 3\gnl
  \gvac1\grcm3\gnl
\gvac1\gcl1\gcl1\gnl
\gvac1\gob2{P\times_{B}\cL}
\gend
=\gbeg4{5}
 \got1{P}\gvac1\got1{P}\gnl
 \grcm\grcm\gnl
\gcl1\gbr\gcl1\gnl
\gmu\gmu\gnl
\gvac1\gob2{P\times_{B}\cL}
\gend$$
we call $P$ a right $\cL$-comodule algebra.
% If $P$ and $Q$ are two right $\CL$-comodule, a map $F:P\to Q$ is called right $\CL$-colinear if $F$ is $\BB$-\rosso{bilinear}   such that $\delta\circ F=(F\diamond_{B}\id)\circ\delta:P\to Q\diamond_{B}\CL$.
\end{Def}
Given a left $B$-bialgebroid $\CL$.
If $V$ and $W$ are two right $\CL$-comodules, then $V\ot_{\BB} W$ is a right $\CL$-comodule with the codiagonal coaction given by
\[\gbeg45
\gvac1\got1{V\ot_{\BB}W}\gnl
  \grcm\grcm\gnl
  \gcl2\gbr\gcl1\gnl
  \gvac1\gcl1\gmu\gnl\gvac1\gob1{(V\ot_{\BB}W)\diamond_{B}\CL}
  \gend\]
  We denote the category of right $\CL$-comodules by $(\B^{\cL},\ot_{\BB})$. Similarly, a left $\cL$-comodule $P$ is a $B$-bimodule together with a $B$-bilinear map $\delta:P\to \cL\times P,\quad p\mapsto p\mo\ot p\z$ in the sense that
  $$\delta:=\gbeg2{4}\gvac{1}\got1P\gnl
  \glcm\gnl
  \gcl1\gcl1\gnl
  \gvac1\gob1{\cL\times_{B}P}
  \gend\qquad\textup{such that}\qquad\gbeg4{6}\got1B\got1P\got1B\gnl
  \glm\gcl1\gnl
  \gvac1\grm\gnl
  \glcm\gnl
  \gcl1\gcl1\gnl
  \gvac1\gob1{\cL\times_{B}P}
  \gend=\gbeg4{6}\got1B\gvac1\got1P\got1B\gnl
\gcl1\glcm\gcl1\gnl
\glm\gbr\gnl
\gvac1\grm\gcl1\gnl
\gvac1\gcl1\gvac1\gcl1\gnl
 \gvac1 \gvac1\gob1{\cL\times_{B}P}
  \gend$$
which is coassociative and counital in the sense that
\begin{align*}
    &\alpha\circ (\Delta\times_{B}\id_{P})\circ \delta=\alpha'\circ(\id_{\cL}\times_{B}\delta)\circ \delta:P\to \cL\times_{B} \cL \times_{B} P\\ 
    &(\varepsilon\diamond_{B}\cL)\circ \delta=\id_{P}.
\end{align*}
In addition, if $P$ is a $B$-ring such that
$$\gbeg4{5}\gvac1\got1{P}\gvac1\got1{P}\gnl
\glcm\glcm\gnl
\gcl1\gbr\gcl1\gnl
\gmu\gmu\gnl
\gob2{\cL}\gob2{P}
\gend=\gbeg4{5}\got1{P}\gvac1\got1{P}\gnl
\gwmu3\gnl
\glcm\gnl
\gcl1\gcl1\gnl
\gob1{\cL}\gob1{P}
\gend,$$
we call $P$ a left $\cL$-comodule algebra.

\subsection{Skew pairing between braided Hopf algebroids}
Similar to \cite{schau1}, we can construct skew pairing for Hopf algebroids. 
\begin{Def}
Let $\cL$ and $\cH$ be two braided $B$-bialgebroids in $\B$. A skew pairing is a map $\skp{-}{-}:\cL\ot\cH\to B$ in $\B$, such that
 {\allowdisplaybreaks\begin{gather}
     \label{Belinear}
           \gbeg45
       \got1B \got1{\cL}\got3{\cH}\gnl
        \glm\gvac1\gcl1\gnl
        \gvac1\glmpt\gnot{\skp{-}{-}}\gcmpb\grmpt\gnl
        \gvac1\gvac1\gcl1\gnl
        \gvac1\gob3B
        \gend
            =
             \gbeg45
       \got1B \got1{\cL}\got3{\cH}\gnl
        \gcl2\gcl1\gvac1\gcl1\gnl
        \gvac1\glmpt\gnot{\skp{-}{-}}\gcmpb\grmpt\gnl
        \gwmu3\gnl
        \gvac1\gob1B
        \gend\qquad\textup{and}\qquad\gbeg46
        \got1{\cL}\got1{B}\got1{\cH}\gnl
        \gcl1\gmp{\ob}\gcl1\gnl
        \gcl1\glm\gnl
        \glmpt\gnot{\skp{-}{-}}\gcmpb\grmpt\gnl
       \gvac1\gcl1\gnl
        \gob3B
        \gend
            =
             \gbeg46
        \got1{\cL}\gvac1\got1{B}\got1{\cH}\gnl
        \gcl1\gvac1\gbr\gnl
        \glmpt\gnot{\skp{-}{-}}\gcmpb\grmpt\gcl2\gnl
       \gvac1\gcl1\gnl
       \gvac1\gwmu3\gnl
        \gvac1\gob3B
        \gend
            \\\notag\\
     \label{adelli}
            \gbeg46
        \got1{\cL}\got1{B}\got1{\cH}\gnl
        \gcl1\gmp{\ob}\gcl1\gnl
        \grm\gcl1\gnl
        \glmpt\gnot{\skp{-}{-}}\gcmpb\grmpt\gnl
       \gvac1\gcl1\gnl
        \gob3B
        \gend
            =
             \gbeg46
        \got1{\cL}\gvac1\got1{B}\got1{\cH}\gnl
        \gcl1\gvac1\gibr\gnl
        \gcl1\gvac1\grm\gnl
        \glmpt\gnot{\skp{-}{-}}\gcmpb\grmpt\gnl
       \gvac1\gcl1\gnl
        \gob3B
        \gend
            \\\notag\\
     \label{agammanabla}
              \gbeg46
        \got1{\cL}\got1{B}\got1{\cH}\gnl
        \gcl1\gcl1\gcl1\gnl
        \grm\gcl1\gnl
        \glmpt\gnot{\skp{-}{-}}\gcmpb\grmpt\gnl
       \gvac1\gcl1\gnl
        \gob3B
        \gend
            =
             \gbeg48
        \got1{\cL}\got1{B}\got1{\cH}\gnl
        \gcl1\gcl1\gcl1\gnl
        \gibr\gcl1\gnl
        \gibr\gcl1\gnl
        \grm\gcl1\gnl
        \glmpt\gnot{\skp{-}{-}}\gcmpb\grmpt\gnl
       \gvac1\gcl1\gnl
        \gob3B
        \gend=  \gbeg46
        \got1{\cL}\got1{B}\got1{\cH}\gnl
        \gcl1\gcl1\gcl1\gnl
        \gcl1\glm\gnl
        \glmpt\gnot{\skp{-}{-}}\gcmpb\grmpt\gnl
       \gvac1\gcl1\gnl
        \gob3B
        \gend
            \\\notag\\
     \label{agammaeta}
            \gbeg48
        \gvac1\got1{B}\got1{\cL}\got1{\cH}\gnl
       \gvac1\gmp{\ob}\gcl1\gcl2\gnl
       \gvac1\gibr\gnl
       \gvac1\gcl1\gibr\gnl
       \gvac1\gcl1\grm\gnl
       \gvac1\gdnot{\skp{-}{-}}\glmptb\grmpt\gnl
        \gvac1\gcl1\gnl
        \gob3B
        \gend
            =
             \gbeg46
       \got1B \got1{\cL}\got3{\cH}\gnl
       \gmp{\ob}\gcl1\gvac1\gcl1\gnl
        \glm\gvac1\gcl1\gnl
        \gvac1\glmpt\gnot{\skp{-}{-}}\gcmpb\grmpt\gnl
        \gvac1\gvac1\gcl1\gnl
        \gvac1\gob3B
        \gend=  \gbeg48
        \gvac1\got1{B}\got1{\cL}\got1{\cH}\gnl
       \gvac1\gmp{\ob}\gcl1\gcl2\gnl
       \gvac1\gbr\gnl
       \gvac1\gcl1\gibr\gnl
       \gvac1\gcl1\grm\gnl
       \gvac1\gdnot{\skp{-}{-}}\glmptb\grmpt\gnl
        \gvac1\gcl1\gnl
        \gob3B
        \gend
        \\\notag\\
     \label{product}
             \gbeg58
       \got1{\cL} \got1{\cL}\got2{\cH}\gnl
       \gcl1\gcl1\gcmu\gnl
       \gcl1\gdnot{\skp{-}{-}}\glmptb\grmpt\gcl1\gnl
        \grm\gvac1\gcl1\gnl
        \gcn2113\gvac1\gcl1\gnl
        \gvac1\glmpt\gnot{\skp{-}{-}}\gcmpb\grmpt\gnl
        \gvac1\gvac1\gcl1\gnl
        \gvac1\gob3B
        \gend=\gbeg46
\got1{\cL}\got1{\cL}\got1{\cH}\gnl
\gmu\gcl1\gnl
\gcn2121\gcl1\gnl
\glmpt\gnot{\skp{-}{-}}\gcmpb\grmpt\gnl
\gvac1\gcl1\gnl
\gvac1\got1B
        \gend
            =
             \gbeg48
       \got1{\cL} \got1{\cL}\got2{\cH}\gnl
       \gcl1\gcl1\gcmu\gnl
       \gcl1\gdnot{\skp{-}{-}}\glmpt\grmptb\gcl1\gnl
        \gcl1\gvac1\glm\gnl
        \gcn2113\gvac1\gcl1\gnl
        \gvac1\glmpt\gnot{\skp{-}{-}}\gcmpb\grmpt\gnl
        \gvac1\gvac1\gcl1\gnl
        \gvac1\gob3B
        \gend
  \\\notag\\
     \label{coproduct}
            \gbeg59
       \got2{\cL} \got1{\cH}\got1{\cH}\gnl
       \gcmu\gbr\gnl
       \gcl1\gdnot{\skp{-}{-}}\glmptb\grmpt\gcl1\gnl\gcl1\gmp{\ob}\gvac1\gcl1\gnl
       \gibr\gvac1\gcl1\gnl
       \glm\gvac1\gcl1\gnl
       \gvac1\glmpt\gnot{\skp{-}{-}}\gcmpb\grmpt\gnl
       \gvac1\gvac1\gcl1\gnl
\gvac1\gvac1\gob1B
        \gend= \gbeg46
\got1{\cL}\got1{\cH}\got1{\cH}\gnl
\gcl1\gmu\gnl
\gcl1\gcn2123\gnl
\glmpt\gnot{\skp{-}{-}}\gcmpb\grmpt\gnl
\gvac1\gcl1\gnl
\gvac1\gob1B
        \gend
            =
             \gbeg49
       \got2{\cL} \got1{\cH}\got1{\cH}\gnl
       \gcmu\gcl1\gcl1\gnl
       \gcl1\gibr\gcl1\gnl
       \gcl1\gcl1\gdnot{\skp{-}{-}}\glmptb\grmpt\gnl
       \gcl1\gcl1\gmp{\ob}\gnl
       \gcl1\grm\gnl
       \gdnot{\skp{-}{-}}\glmptb\grmpt\gnl
       \gcl1\gnl
       \gob1B
        \gend
  \end{gather}}
\end{Def}

\subsection{Braided Ehresmann-Schauenburg Hopf algebroids}

In this section, we extend the construction of the Ehresmann--Schauenburg Hopf algebroid (ES-Hopf algebroid) associated to a Galois object \cite{schau2} to the setting of Hopf--Galois extensions. Furthermore, we will show that the resulting Ehresmann--Schauenburg Hopf algebroid  is a braided Hopf algebroid which generalizes the result in \cite[Remark 4.12]{HS25}.

\begin{Def}
  Let $H$ be a bialgebra  and $V$ a right $H$-comodule in $\B$.
  The coinvariant subobject $V^{co \,H}\subset V$ is the equalizer
  of $\delta,V\o\eta\colon V\to V\o H$.
  % The notation $\lcofix WH$ is used for the left
  % coinvariant subobject of a left $H$-comodule $W$.
\end{Def}

\begin{Def}
  Let $H$ be a Hopf algebra in $\B$, $P$ be a right $H$-comodule algebra in $\B$  with $B=P^{co \,H}\subset P$. If
  the canonical map
  $$\can=\gbeg3{5}\gvac1\got1{P\otimes_{B}P}\gnl
  \gcl1\gvac1\grcm\gnl
  \gcl1\gcn2131\gcl1\gnl
  \gmu\gvac1\gcl1\gnl
  \gob2P\gvac1\gob1H
  \gend
  $$
  is bijective,  we call $B\subset P$ a $H$-Galois extension. If $P$ is a faithfully flat $B$-module, we call $B\subseteq P$ a faithfully flat $H$-Galois extension.
\end{Def}
 If $B\subseteq P$ is a
$H$-Galois extension, the inverse of the canonical map can be determined by the right translation map:
    \begin{align*}
       \tau:=(\can)^{-1}|_{1\ot{}H}:H\to P\ot_{B}P, \qquad h\mapsto \tuno{h}\otimes\tdue{h},
    \end{align*}
    namely, $\can\inv(p\ot h)=p\tuno{h}\otimes\tdue{h}$. Graphically, we also have
     \begin{equation}\label{cantau}
  \gbeg46
  \got3H\gnl
  \gvac1\gcl1\gnl
  \glmpb\gnot{\tau}\gcmpt\grmpb\gnl
  \gcl1\gvac1\grcm\gnl
  \gwmu3\gcl1\gnl
  \gob3P\gob1H
  \gend
  =
  \gbeg25
  \gvac1\got1H\gnl
  \gvac1\gcl3\gnl
  \gu1\gnl
  \gcl1\gnl
  \gob1P\gob1{H.}
  \gend
 \qquad\textup{and}\qquad
  \gbeg35
  \got1P\gnl
  \grcm\gnl
  \gcl1\gwnot2\tau\glmptb\grmpb\gnl
  \gmu\gcl 1\gnl
  \gvac1\gob1{P\ot_{B}P}
  \gend
  =
  \gbeg 3{5}
  \gvac1\got1{B\ot_{B}P}\gnl
  \gcl1\gvac1\gcl3\gnl
  \gcl1\gvac1\gnl
  \gcl1\gvac1\gnl
  \gvac1\gob1{P\ot_{B}P}
  \gend=
  \gbeg3{5}\got1P\gnl
  \gcl3\gnl
  \gob1P\gend
  \end{equation}
 By the same proof (only need to take care of the balanced tensor product over $B$) in \cite[Lemma 3.4 ]{schau2}, we have
\begin{Lem}Let $B\subset P$ be a $H$-Galois extension in $\B$. Then
   {\allowdisplaybreaks\begin{gather}
     \label{delre}
            \gbeg 45
            \got 3H\gnl
            \gvac1\gcl1\gnl
            \glmpb\gnot\tau\gcmpt\grmpb\gnl
            \gcl1\gvac1\grcm\gnl
            \gvac1\gob1{P\ot_{B}P}\gob3{H}
            \gend
            =
            \gbeg45
            \gvac2\got1H\gnl
            \gvac1\gwcm3\gnl
            \glmpb\gnot\tau\gcmpt\grmpb\gcl2\gnl
            \gcl1\gvac1\gcl1\gnl
            \gvac1\gob1{P\ot_{B}P}\gob3{H}
            \gend
            \\\notag\\
     \label{delli}
            \gbeg46
            \got3H\gnl
            \gvac1\gcl1\gnl
            \glmpb\gnot\tau\gcmpt\grmpb\gnl
            \grcm\gcl1\gnl
            \gibr\gcn2113\gnl
            \gob1H\gob3{P\ot_{B}P}
            \gend
            =
            \gbeg46
            \got3H\gnl
            \gvac1\gcl1\gnl
            \gwcm3\gnl
            \gmp{S}\glmpb\gnot\tau\gcmpt\grmpb\gnl
            \gcl1\gcl1\gvac1\gcl1\gnl
            \gob1H\gob3{P\ot_{B}P}
            \gend
            \\\notag\\
     \label{gammanabla}
            \gbeg 4{6}
            \got1{B}\gvac1\got1{H}\gnl
            \gcl1
            \gvac1\gcl1\gnl
            \gcl1\glmpb\gnot{\tau}\gcmpt\grmpb\gnl
            \gcl1\gcl1\gvac1\gcl1\gnl
            \glm\gvac1\gcl1\gnl
            \gvac1\gvac1\gob1{P\ot_{B}P}
            \gend=\gbeg 4{6}
            \gvac1\got1{B}\gvac1\got1{H}\gnl
            \gvac1\gibbr31\gnl
            \glmpb\gnot{\tau}\gcmpt\grmpb\gcl1\gnl
            \gcl1\gvac1\gcl1\gcl1\gnl
            \gcl1\gvac1\grm\gnl
            \gvac1\gob1{P\ot_{B}P}
            \gend\qquad\textup{and}\qquad\gbeg 35
            \got1H\gvac1\got1H\gnl
            \gwmu3\gnl
            \glmpb\gnot\tau\gcmpt\grmpb\gnl
            \gcl1\gvac1\gcl1\gnl
            \gob3{P\ot_{B}P}
            \gend
            =
            \gbeg 47
            \got1H\gvac2\got1H\gnl
            \gcl1\gvac2\gcl1\gnl
            \gdnot\tau\glmptb\grmpb\gdnot\tau\glmpb\grmptb\gnl
            \gcl1\gbr\gcl1\gnl
            \gbr\gmu\gnl
            \gmu\gcn2122\gnl
             \gob4{P\ot_{B}P}
            \gend
            \\\notag\\
     \label{gammaeta}
            \gbeg 35
            \gnl
            \gvac1\gu1\gnl
            \glmpb\gnot\tau\gcmpt\grmpb\gnl
            \gcl1\gvac1\gcl1\gnl
            \gob3{P\ot_{B}P}
            \gend
            =
            \gbeg25
            \gvac1\gnl
            \gvac1\gnl
            \gu1\gvac1\gu1\gnl
            \gcl1\gvac1\gcl1\gnl
            \gob3{P\ot_{B}P}
            \gend\qquad\textup{and}\qquad\gbeg36\got3H\gnl
    \gvac1\gcl1\gnl
    \glmpb\gnot\tau\gcmpt\grmpb\gnl
    \gcl1\gvac1\gcl1\gnl
    \gwmu3\gnl
    \gob3P
    \gend=\gbeg16\got1H\gnl
    \gcl1\gnl
   \gcu1\gnl
    \gu1\gnl
    \gcl1\gnl
    \gob1P
    \gend.
  \end{gather}}
\end{Lem}
\begin{proof}
For the first equation of (\ref{gammanabla}). After applying the canonical map on both sides, we get the same result. For the second equation of (\ref{gammaeta}).
  $$\gbeg36\got3H\gnl
    \gvac1\gcl1\gnl
    \glmpb\gnot\tau\gcmpt\grmpb\gnl
    \gcl1\gvac1\gcl1\gnl
    \gwmu3\gnl
    \gob3P
    \gend=\gbeg46\gvac2\got1H\gnl
            \gvac1\gwcm3\gnl
            \glmpb\gnot\tau\gcmpt\grmpb\gcl1\gnl
            \gcl1\gvac1\gcl1\gcu1\gnl
    \gwmu3\gnl
    \gob3P
    \gend=\gbeg46\got3H\gnl
    \gvac1\gcl1\gnl
    \glmpb\gnot\tau\gcmpt\grmpb\gnl
    \gcl1\gvac1\grcm\gnl
    \gwmu3\gcu1\gnl
    \gob3P
    \gend=\gbeg16\got1H\gnl
    \gcl1\gnl
   \gcu1\gnl
    \gu1\gnl
    \gcl1\gnl
    \gob1P
    \gend.
    $$

\end{proof}

% \begin{Def}
% A Hopf module $M\in\BHM.H..P.$ for a right $H$-comodule algebra
% $P$ is a right $H$-comodule and right $P$-module such that the
% module structure on $M$ is an $H$-comodule morphism with respect
% to the codiagonal comodule structure on $M\o P$.
% \end{Def}
% By the same method as in \cite{schau2}, we have
% \begin{Prop}\nmlabel{Proposition}{structhm}
%   Assume that $B\subset P$ is a faithfully flat $H$-Galois extension in $\B$. Then for every Hopf
%   module in $\BHM.H..A.$ the morphism
%   $$\mu_0\colon\rcofix MH\o A\to M$$
%   is an isomorphism. The inverse is determined by commutativity of
%   $$\xymatrix{M\ar[r]^-{\mu_0\inv}\ar[d]_{\delta}
%       &\rcofix MH\o A\ar[dd]^{\iota\o A}\\
%       M\o H\ar[d]_{M\o\gamma}
%       &
%       \\
%       M\o A\o A\ar[r]^-{\mu\o A}
%       &M\o A.}$$
% \end{Prop}

\begin{Prop}\nmlabel{Proposition}{coHsubalg}
  Let $B\subset P$ be a  $H$-Galois extension in $\B$.
  Then the right coinvariant subobject $L(P,H):=\rcofix{(P\o P)}H$ taken with respect
  to the codiagonal comodule structure is a subalgebra of $P\o \overline{P}$. Moreover, $\rcofix{(P\o P)}H$ is a $B\ot\BB$-ring with $\eta:B^e\to \rcofix{(P\o P)}H, a\ot\ol b\mapsto a\ot \ol b$. 
\end{Prop}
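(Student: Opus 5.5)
We view $P\otimes\overline{P}$ as the tensor product algebra of $P$ and $\overline{P}$ in $\B$, with product $(p\otimes\overline{q})(p'\otimes\overline{q'})=pp'_{\alpha}\otimes\overline{q}^{\alpha}\,\overline{q'}$. Using $\tau_{\overline{P}\,\overline{P}}=\tau_{PP}^{-1}$ and the definition of the opposite product, this becomes $pp'_{\alpha}\otimes\overline{q'{}_{\beta}{}^{\ldots}\,q^{\ldots}}$, a braided version of $pp'\otimes q'q$. On the underlying object $P\otimes P$ we take the codiagonal coaction $\delta(p\otimes q)=p\z\otimes q\z{}_{\alpha}\otimes p\on{}^{\alpha}q\on$. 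Then $L(P,H)$ is the equalizer of $\delta$ and $\id\otimes\eta_H$, which is a subobject in $\B$ because $\B$ has equalizers.

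The first goal is to show that the multiplication of $P\otimes\overline{P}$ restricts to $L(P,H)\otimes L(P,H)\to L(P,H)$, and that the unit $1\otimes\overline{1}$ lies in $L(P,H)$. The unit is immediate, since $\delta_P(1)=1\otimes 1$. For the product, we first check that $\nabla_{P\otimes\overline{P}}$ intertwines codiagonal coactions when the coaction on the product $(P\otimes P)\otimes(P\otimes P)$ is taken in a suitably twisted form. Graphically this uses three facts:
\begin{itemize}
\item the comodule algebra property $\delta\circ\nabla_P=(\nabla_P\otimes\nabla_H)\circ(\id\otimes\tau\otimes\id)\circ(\delta\otimes\delta)$;
\item naturality of the braiding, to move coaction strands past the crossings in the opposite product;
\item the product on $H$.
\end{itemize}
The second factor of $P\otimes\overline{P}$ multiplies as $q'q$ (up to braiding). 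So in $\delta(xy)$ for $x=p\otimes q$ and $y=p'\otimes q'$, the $H$-leg is a product of the form $p\on p'\on q'\on q\on$, suitably braided. Concretely, $\delta(xy)$ has $H$-leg $h_{p}\,(h_{p'}h_{q'})\,h_q$, where the $q$-part of $x$ is transported around $y$. When $y\in L(P,H)$, the middle factor $p'\on{}^{\alpha}q'\on$ collapses to $1_H$. This collapse is obtained by applying the equalizer property of $y$ inside the diagram, which requires naturality to isolate the subdiagram $\delta(y)$. The remaining $H$-leg is then $p\on q\on$ for $x$, which collapses in turn. Hence $\delta(xy)=xy\otimes 1$.

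The main obstacle is the bookkeeping of braidings in this step. We have to make sure that the crossings produced by the tensor-product algebra and by the opposite product $\overline{P}$ (with $\tau^{-1}$ conventions) can be slid, by naturality, so that the coaction of $y$ appears as a contiguous block. I would do this purely graphically, drawing $\delta\circ\nabla$ on $(P\otimes P)\otimes(P\otimes P)$ and using the comodule-algebra axiom twice. The twist in $\overline{P}$ reverses the order of the $q$'s, and this is exactly what makes the $H$-legs nest as $h_p(h_{p'}h_{q'})h_q$ rather than interleave. If the orientation of the braiding from $\overline{\bullet}$ turns out wrong, I would pass to the alternative identification of $\overline{P}$ given by the second picture defining the opposite product.

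For the $B\otimes\overline{B}$-ring structure, $B\otimes\overline{B}\to P\otimes\overline{P}$ is the tensor product of the algebra maps $B\subset P$ and $\overline{B}\subset\overline{P}$, hence an algebra map. Its image lies in $L(P,H)$ because $\delta_P(a)=a\otimes1$ for $a\in B=P^{\mathrm{co}H}$. The codiagonal coaction of $a\otimes b$ is therefore $a\otimes b_{\alpha}\otimes 1^{\alpha}\cdot 1=a\otimes b\otimes1$, using naturality of $\tau$ with the unit. Since $L(P,H)$ is a subalgebra, $\eta$ corestricts to an algebra map $B^e\to L(P,H)$. Note that the Galois condition is not needed for this statement; only the comodule-algebra structure is used.
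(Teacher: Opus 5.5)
Your proposal is correct and matches the paper's argument. The paper proves the subalgebra claim by citing Schauenburg's direct verification that the codiagonal coinvariants are closed under the product $pp'_{\alpha}\otimes q'_{\beta}q^{\alpha\beta}$ of $P\otimes\overline{P}$, and then only remarks that $\eta$ is an algebra map, as in your last paragraph. You are also right that the Galois condition is never used.

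Your orientation fallback is unnecessary. The second picture defining $\overline{A}$ describes the same product, not an alternative convention. With the paper's conventions every crossing in $\delta\circ\nabla$ is a $\tau$. The only non-trivial feature is the double braiding of $q$ around the $H$-leg of $y$, and it disappears once that leg is replaced by $1_H$.
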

\begin{proof}The proof is the same as in \cite[Proposition 2.2.]{schau2}. We only need to observe that $\eta$ is an algebra map.
\end{proof}
Since $P$ is a $B$-ring and $\ol{P}$ is a $\BB$-ring, we observe that the left $B$ and $\BB$-actions on $L=L(P,H)$ are given by
$$\gbeg23\got1B\got1L\gnl\glm\gnl\gvac1\gob1L\gend=\gbeg66
      \got1{B}\gvac1\got1{L}\gnl
    \gcl2 \gvac1\gcl1\gnl
     \gvac1\glmpb\gnot\iota\gcmpt\grmpb\gnl
     \glm\gvac1\gcl1\gnl
     \gvac1\gcl1\gvac1\gcl1\gnl\gvac1\gvac1\gob1{L}\gend\qquad\textup{and}\qquad\gbeg23\got1{\BB}\got1L\gnl\glm\gnl\gvac1\gob1L\gend=\gbeg67
      \got1{\BB}\gvac1\got1{L}\gnl
    \gcl2 \gvac1\gcl1\gnl
     \gvac1\glmpb\gnot\iota\gcmpt\grmpb\gnl
     \gbr\gvac1\gcl1\gnl
     \gcl1\gcn2113\gcl1\gnl
     \gcl1\gvac1\gmu\gnl
     \gvac1\gvac1\gnl\gvac1\gob1{L}\gend=\gbeg6{10}
      \got1{\BB}\got1{L}\gnl
    \gcl1\gcl1\gnl
     \gmp{\ob}\gdnot\iota\glmptb\grmpb\gnl
     \gbr\gmp{\ob}\gnl
     \gcl1\gbr\gnl
     \gcl1\gmu\gnl
     \gcl1\gcn2123\gnl
     \gcl1\gvac1\gmp{\ob}\gnl
     \gcl1\gvac1\gcl1\gnl
     \gvac1\gvac1\gnl\gvac1\gob1{L}\gend,$$
      where $\iota:L\to P\ot \ol{P}$ is the inclusion map. Notice that if we identify $P$ with $\ol P$, the multiplication of $L$ is
     $$ \gbeg2{3}\got1L\got1L\gnl
      \gmu\gnl
      \gob2L
      \gend=\gbeg47
  \got1L\got3L\gnl
  \gcl1\gvac1\gcl1\gnl
  \gdnot\iota\glmptb\grmpb\gdnot\iota\glmptb\grmpb\gnl
  \gcl1\gbr\gcl1\gnl
  \gmu\gbr\gnl
  \gcn2122\gmu\gnl
  \gob2P\gob2P
  \gend$$
  As a generalization of \cite{schau2}, we have
\begin{Lem}
  Let $B\subset P$ be a faithfully flat $H$-Galois extension in $\B$.
  Then $L=L(P,H)$ is a $B$-bialgebroid in $\B$. More precisely, the $B^e$-ring structure is given by  Proposition~\ref{coHsubalg},  the coproduct and counit are given by
  $$\Delta=\gbeg68
    \got3{L}\gnl
    \gvac1\gcl1\gnl
    \glmpb\gnot\iota\gcmpt\grmpb\gnl
    \grcm\gcl1\gnl
    \gcl1\gcn1113\gcn2115\gnl
    \gcl1\glmpb\gnot\tau\gcmpt\grmpb\gcn2111\gnl
    \gcl1\gcl1\gvac1\gcl1\gcl1\gnl
    \gob2{L}\gvac1\gob2{L}\gend\qquad\textup{and}\qquad\varepsilon=\gbeg66
    \got3{L}\gnl
    \gvac1\gcl1\gnl
    \glmpb\gnot\iota\gcmpt\grmpb\gnl
    \gcl1\gvac1\gcl1\gnl
    \gwmu3\gnl
\gvac1\gob1B
    \gend.$$
\end{Lem}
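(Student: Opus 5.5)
The first and main task is to make sense of the formula for $\Delta$ and show that it lands in $L\times_B L$. For $X=X^{(1)}\ot X^{(2)}\in L\subset P\ot\ol P$, the diagram reads
\[\Delta(X)=X^{(1)}{}_{(0)}\ot \tuno{(X^{(1)}{}_{(1)})}\ot\tdue{(X^{(1)}{}_{(1)})}\ot X^{(2)}.\]
Since the translation map $\tau$ takes values in $P\ot_B P$, this expression is a priori an element of $P\ot_B P\ot P$, after the middle factors have been identified suitably. The membership $\Delta(X)\in L\diamond_B L$ rests on two facts. First, the right colinearity (\ref{delre}) of $\tau$, together with coinvariance of $X$ under the codiagonal coaction, shows that $\tdue{(X^{(1)}{}_{(1)})}\ot X^{(2)}$ is coinvariant. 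Second, (\ref{delli}) shows that $X^{(1)}{}_{(0)}\ot\tuno{(X^{(1)}{}_{(1)})}$ is coinvariant; here $S$ cancels against the coaction, the braided form of the classical argument.

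To pass from the coinvariants of $P\ot_B P\ot P$ to $L\diamond_B L$, I will use faithful flatness, following \cite[Prop.~3.1]{schau2}. The functor $-\ot_B P$ and its companion on the other side commute with the equalizers that define coinvariants, so $(P\ot\ol P)^{\mathrm{co}H}\diamond_B(P\ot\ol P)^{\mathrm{co}H}\cong\big(P\ot_B P\ot \ol P\big)^{\mathrm{co}H\ot H}$. In the same way, the Takeuchi condition $\Delta(X)\ol{b_\alpha}\ot{}^\alpha=\cdots b$ reduces to the identity $(\ref{gammanabla})$, first equation, which describes how $\tau$ interacts with $B$. The analogous identifications for triple products show that $\alpha,\alpha'$ are isomorphisms, so that coassociativity is meaningful.

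With $\Delta$ in place, I will check the remaining axioms by graphical calculus.
\begin{itemize}
\item $\Delta$ is left $B^e$-linear: by the description of the $B$ and $\BB$ actions on $L$, this is immediate because $B$ is coinvariant.
\item Coassociativity: both sides reduce, after the identification above, to $\tau$ applied twice. They agree by (\ref{delre}) and the multiplicativity of $\tau$, the second equation of (\ref{gammanabla}), applied in its comodule form (colinearity of $\tau$ in the first leg).
\item Counit $\varepsilon(X)=X^{(1)}X^{(2)}$ lands in $B$: this follows from coinvariance of $X$, and it uses the braided-commutative form of the product of $P$ with $\ol P$ only through the identification of $P$ with $\ol P$.
\item Counitality: $(\varepsilon\diamond\id)\Delta=\id$ follows from the second equation of (\ref{gammaeta}) and the counit of $H$. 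The other side, $(\id\diamond\varepsilon)\Delta=\id$, follows from the second identity of (\ref{cantau}).
\item $\varepsilon$ is a left character, $\varepsilon(XY)=\varepsilon(X\varepsilon(Y))$ and $\varepsilon(XY)=\varepsilon(X\overline{\varepsilon(Y)})$: this is a direct diagram manipulation with the product on $L$ displayed above, and the braiding moves $\varepsilon(Y)\in B$ into position.
\item $\Delta$ is multiplicative into $L\times_B L$: this uses the multiplicativity of the coaction of $P$ and the second equation of (\ref{gammanabla}) for $\tau(hg)$. The braidings in that equation match exactly the braidings in the product of $L\times_B L$.
\end{itemize}

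I expect the well-definedness step, identifying the codomain with $L\times_B L$ via faithful flatness in the braided setting, to be the main obstacle. The rest consists of routine graphical computations modelled on \cite{schau2}.
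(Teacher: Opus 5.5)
Your proposal follows the paper's proof essentially step by step: coinvariance of the two legs of $\Delta(X)$ via (\ref{delre}) and (\ref{delli}), the Takeuchi condition via the first equation of (\ref{gammanabla}), multiplicativity via its second equation, and counitality via (\ref{gammaeta}). Your explicit faithful-flatness identification spells out what the paper uses tacitly; note that $\Delta(X)$ lives in $P\ot(P\ot_B P)\ot P$, which has four factors, not three. Two citations are off but easily repaired. Coassociativity needs only (\ref{delre}) and coassociativity of the coaction, not the multiplicativity of $\tau$. Also, $(\id\diamond_B\varepsilon)\circ\Delta$ sends $p\ot q$ to $p\z\ot\tuno{(p\on)}\tdue{(p\on)}q$, where $p\z$ is never multiplied against $\tuno{(p\on)}$, so it is closed by the second equation of (\ref{gammaeta}), $\tuno{h}\tdue{h}=\varepsilon(h)1$, rather than the second identity of (\ref{cantau}).
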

\begin{proof}
By the first equation of (\ref{gammanabla}), we can see $\Delta$ is well defined. Also, it is left $B$ and left $\BB$-linear. Now, let's check the image of $\Delta$ belongs to the Takeuchi product. Indeed,
$$\gbeg6{11}
    \got3{L}\gvac1\gvac1\got1B\gnl
    \gvac1\gcl1\gvac1\gvac1\gvac1\gcl2\gnl
    \glmpb\gnot\iota\gcmpt\grmpb\gnl
    \grcm\gcl1\gvac1\gvac1\gmp{\ob}\gnl
    \gcl1\gcn1113\gcn2115\gvac1\gcl2\gnl
    \gcl1\glmpb\gnot\tau\gcmpt\grmpb\gcn2111\gnl
    \gcl1\gcl1\gvac1\gcl1\gbr\gnl
    \gcl1\gcl1\gvac1\gbr\gcl1\gnl
    \gcl1\gcl1\gcn2131\gcl1\gcl1\gnl
     \gcl1\gmu\gvac1\gcl1\gcl1\gnl
    \gob2{L}\gvac1\gvac1\gob2{L}\gend=\gbeg6{14}
    \got3{L}\gvac1\gvac1\got1B\gnl
    \gvac1\gcl1\gvac1\gvac1\gvac1\gcl2\gnl
    \glmpb\gnot\iota\gcmpt\grmpb\gnl
    \grcm\gcl1\gvac1\gvac1\gcl1\gnl
    \gcl1\gcn1113\gcn2115\gvac1\gcl2\gnl
    \gcl1\glmpb\gnot\tau\gcmpt\grmpb\gcn2111\gnl
    \gcl1\gcl1\gvac1\gcl1\gbr\gnl
    \gcl1\gmp{\ob}\gvac1\gbr\gcl1\gnl
    \gcl1\gcl1\gcn2131\gcl1\gcl1\gnl
     \gcl1\gbr\gvac1\gcl1\gcl1\gnl
     \gcl1\gmu\gvac1\gcl1\gcl1\gnl
     \gcl1\gcn2123\gvac1\gcl1\gcl1\gnl
     \gcl1\gvac1\gmp{\ob}\gvac1\gcl1\gcl1\gnl
    \gvac1\gob2{L}\gvac1\gob2{L}\gend=\gbeg6{9}
    \got3{L}\gvac1\gvac1\got1B\gnl
    \gvac1\gcl1\gvac1\gvac1\gvac1\gcl2\gnl
    \glmpb\gnot\iota\gcmpt\grmpb\gnl
    \grcm\gcl1\gvac1\gvac1\gcl1\gnl
    \gcl1\gcn1113\gcn2115\gvac1\gcl2\gnl
    \gcl1\glmpb\gnot\tau\gcmpt\grmpb\gcn2111\gnl
    \gcl1\gcl1\gvac1\gcl1\gbr\gnl
 \gcl1\gcl1\gvac1\gmu\gcl1\gnl
    \gob2{L}\gvac1\gvac1\gob2{L}\gend$$
    where the first step use the definition of opposite algebra, the second step uses the first equation of (\ref{gammanabla}). We show that the first leg belongs to $L$. Indeed
$$\gbeg6{11}
   \got3{L}\gvac1\gnl
   \gvac1\gcl1\gnl
\glmpb\gnot\iota\gcmpt\grmpb\gnl
\grcm\gcn2115\gnl
\gcl1\gcn2113\gvac1\gcn2113\gnl
\gcl1\glmpb\gnot\tau\gcmpt\grmpb\gvac1\gcl1\gnl
\gcl1\gcn2113\gcn2113\gcl1\gnl
\grcm\grcm\gcl1\gcl1\gnl
\gcl1\gbr\gcl1\gcl1\gcl1\gnl
\gcl1\gcl1\gmu\gcl1\gcl1\gnl
\gob2{L}\gob2{H}\gvac1\gob1{L}
\gend
=\gbeg6{15}
   \got3{L}\gvac1\gnl
   \gvac1\gcl1\gnl
\glmpb\gnot\iota\gcmpt\grmpb\gnl
\grcm\gcn2113\gnl
\gcl1\gcn2113\gcn2113\gnl
\gcl1\gwcm3\gcn2113\gnl
\gcl1\gcl1\gvac1\gdnot\tau\glmptb\grmpb\gcl1\gnl
\gcl1\gcl1\gcn2131\gcl1\gcl1\gnl
\gcl1\gcl1\grcm\gcl1\gcl1\gnl
\gcl1\gcl1\gibr\gcl1\gcl1\gnl
\gcl1\gmu\gcl1\gcl1\gcl1\gnl
\gcl1\gcn2123\gcl1\gcl1\gcl1\gnl
\gcl1\gvac1\gbr\gcl1\gcl1\gnl
\gcl1\gvac1\gcl1\gcl1\gcl1\gcl1\gnl
\gvac1\gob1L\gvac1\gob1{H}\gob2{L}
\gend
=\gbeg6{14}
   \got3{L}\gvac1\gnl
   \gvac1\gcl1\gnl
\glmpb\gnot\iota\gcmpt\grmpb\gnl
\grcm\gcn2113\gnl
\gcl1\gcn2112\gcn2113\gnl
\gcl1\gcmu\gvac1\gcl1\gnl
\gcl1\gcl1\gcn2112\gcl1\gnl
\gcl1\gcl1\gcmu\gcn2113\gnl
\gcl1\gcl1\gmp{S}\gdnot\tau\glmptb\grmpb\gcl1\gnl
\gcl1\gmu\gcl1\gcl1\gcl1\gnl
\gcl1\gcn2123\gcl1\gcl1\gcl1\gnl
\gcl1\gvac1\gbr\gcl1\gcl1\gnl
\gcl1\gvac1\gcl1\gcl1\gcl1\gcl1\gnl
\gvac1\gob1L\gvac1\gob1{H}\gob2{L}
\gend=
\gbeg6{8}
   \got3{L}\gvac1\gnl
   \gvac1\gcl1\gnl
\glmpb\gnot\iota\gcmpt\grmpb\gnl
\grcm\gcn2115\gnl
\gcl1\gcn2113\gvac1\gcn2113\gnl
\gcl1\glmpb\gnot\tau\gcmpt\grmpb\gvac1\gcl1\gnl
\gcl1\gcl1\gu1\gcl1\gvac1\gcl1\gnl
\gob2{L}\gob1{H}\gvac1\gob1{L}
\gend$$
where the 2nd step uses (\ref{delli}).  For the second leg, we have
$$\gbeg6{11}
   \got3{L}\gvac1\gnl
   \gvac1\gcl1\gnl
\glmpb\gnot\iota\gcmpt\grmpb\gnl
\grcm\gcn2115\gnl
\gcl1\gcn2113\gvac1\gcn2113\gnl
\gcl1\glmpb\gnot\tau\gcmpt\grmpb\gvac1\gcl1\gnl
\gcl1\gcl1\gvac1\grcm\grcm\gnl
\gcl1\gcl1\gvac1\gcl1\gbr\gcl1\gnl
\gcl1\gcl1\gvac1\gcl1\gcl1\gmu\gnl
\gcl1\gcl1\gvac1\gcl1\gcl1\gcn2122\gnl
\gob2{L}\gvac1\gob2{L}\gob2H
\gend
=
\gbeg6{10}
   \got3{L}\gvac1\gnl
   \gvac1\gcl1\gnl
\glmpb\gnot\iota\gcmpt\grmpb\gnl
\grcm\gcn2115\gnl
\gcl1\gcn2113\gvac1\gcn2113\gnl
\gcl1\gwcm3\gvac1\gcl1\gnl
\gcl1\gdnot\tau\glmptb\grmpb\gcn2113\grcm\gnl
\gcl1\gcl1\gcl1\gvac1\gbr\gcl1\gnl
\gcl1\gcl1\gcl1\gvac1\gcl1\gmu\gnl
\gob2{L}\gvac1\gob2{L}\gob2H
\gend
=
\gbeg5{10}
   \got3{L}\gvac1\gnl
   \gvac1\gcl1\gnl
\glmpb\gnot\iota\gcmpt\grmpb\gnl
\grcm\gcn2113\gnl
\gcl1\gcn2113\gcn2113\gnl
\grcm\gcn2113\grcm\gnl
\gcl1\gdnot\tau\glmptb\grmpb\gbr\gcl1\gnl
\gcl1\gcl1\gcl1\gcl1\gmu\gnl
\gcl1\gcl1\gcl1\gcl1\gcn2122\gnl
\gob2{L}\gob2{L}\gob2H
\gend=
\gbeg68
    \got3{L}\gnl
    \gvac1\gcl1\gnl
    \glmpb\gnot\iota\gcmpt\grmpb\gnl
    \grcm\gcl1\gnl
    \gcl1\gcn1111\gcn2113\gnl
    \gcl1\gdnot\tau\glmptb\grmpb\gcl1\gnl
    \gcl1\gcl1\gcl1\gcl1\gu1\gnl
    \gob2L\gob2L\gob1H
    \gend$$
 where the 2nd step uses  (\ref{delre}).        
We show that the coproduct is coassociative.

$$\gbeg6{10}
   \got3{L}\gvac1\gnl
   \gvac1\gcl1\gnl
\glmpb\gnot\iota\gcmpt\grmpb\gnl
\grcm\gcn2113\gnl
\gcl1\gdnot\tau\glmptb\grmpb\gcn2113\gnl
\gcl1\gcl1\grcm\gcn2113\gnl
\gcl1\gcl1\gcl1\gdnot\tau\glmptb\grmpb\gcl1\gnl
\gcl1\gcl1\gcl1\gcl1\gcl1\gcl1\gnl
\gend
=
\gbeg6{10}
   \got3{L}\gvac1\gnl
   \gvac1\gcl1\gnl
\glmpb\gnot\iota\gcmpt\grmpb\gnl
\grcm\gcn2113\gnl
\gcl1\gcn2113\gcn2113\gnl
\gcl1\gwcm3\gcn2113\gnl
\gcl1\gdnot\tau\glmptb\grmpb\gdnot\tau\glmptb\grmpb\gcl1\gnl
\gcl1\gcl1\gcl1\gcl1\gcl1\gcl1\gnl
\gend=
\gbeg8{10}
   \got3{L}\gvac1\gnl
   \gvac1\gcl1\gnl
\glmpb\gnot\iota\gcmpt\grmpb\gnl
\grcm\gcn2115\gnl
\gcl1\gcn2113\gvac1\gcn2113\gnl
\grcm\gcn2113\gvac1\gcl1\gnl
\gcl1\gdnot\tau\glmptb\grmpb\gdnot\tau\glmptb\grmpb\gcl1\gnl
\gcl1\gcl1\gcl1\gcl1\gcl1\gcl1\gnl
\gend$$
  where the 2nd step uses (\ref{delre}).  
We show that the coproduct is an algebra map. 
$$\gbeg8{10}
   \got3{L}\gvac1\gvac1\got1{L}\gnl
   \gvac1\gcl1\gvac1\gvac1\gvac1\gcl1\gnl
\glmpb\gnot\iota\gcmpt\grmpb\gvac1\glmpb\gnot\iota\gcmpt\grmpb\gnl
\grcm\gcn2113\grcm\gcn2113\gnl
\gcl1\gdnot\tau\glmptb\grmpb\gbr \gdnot\tau\glmptb\grmpb\gcl1\gnl
\gcl1\gcl1\gbr\gbr\gcl1\gcl1\gnl
\gcl1\gbr\gbr\gbr\gcl1\gnl
\gmu\gbr\gmu\gbr\gnl
\gcn2122\gmu\gcn2122\gmu\gnl
    \gvac1\gob2{L}\gvac1\gvac1\gob2{L}\gend
    =\gbeg8{12}
   \gvac1\got3{L}\gvac1\gvac1\got1{L}\gnl
   \gvac1\gvac1\gcl1\gvac1\gvac1\gvac1\gcl1\gnl
\gvac1\glmpb\gnot\iota\gcmpt\grmpb\gvac1\glmpb\gnot\iota\gcmpt\grmpb\gnl
\gvac1\grcm\gbbr31\gvac1\gcl1\gnl
\gvac1\gcl1\gcl1\grcm\gcn2113\gcl1\gnl
\gcn2131\gbr\gdnot\tau\glmptb\grmpb\gbr\gnl
\gcl1\gcn2131\gcl1\gcl1\gcl1\gmu\gnl
\gmu\gdnot\tau\glmpb\grmptb\gcl1\gcl1\gcn2122\gnl
\gcn2122\gcl1\gbr\gcl1\gcn2122\gnl
\gcn2122\gbr\gmu\gcn2122\gnl
\gcn2122\gmu\gcn2122\gcn2122\gnl
\gvac1\gob2{L}\gvac1\gvac1\gob2{L}
\gend
=
\gbeg6{11}
 \gvac1\got3{L}\gvac1\gvac1\got1{L}\gnl
   \gvac1\gvac1\gcl1\gvac1\gvac1\gvac1\gcl1\gnl
\gvac1\glmpb\gnot\iota\gcmpt\grmpb\gvac1\glmpb\gnot\iota\gcmpt\grmpb\gnl
\gvac1\grcm\gbbr31\gvac1\gcl1\gnl
\gvac1\gcl1\gcl1\grcm\gcn2113\gcl1\gnl
\gvac1\gcl1\gbr\gcl1\gvac1\gcl1\gcl1\gnl
\gvac1\gmu\gmu\gvac1\gbr\gnl
\gvac1\gcn2122\gcn2123\gvac1\gmu\gnl
\gvac1\gcn2122\glmpb\gnot\iota\gcmpt\grmpb\gcn2122\gnl
\gvac1\gcn2122\gcl1\gvac1\gcl1\gcn2122\gnl
\gvac1\gvac1\gob2{L}\gvac1\gvac1\gob2{L}
\gend$$
$$=
\gbeg6{9}
\got3{L}\gvac1\got1{L}\gnl
\gvac1\gcl1\gvac1\gvac1\gcl1\gnl
\glmpb\gnot\iota\gcmpt\grmpb\glmpb\gnot\iota\gcmpt\grmpb\gnl
\gcl1\gvac1\gbr\gvac1\gcl1\gnl
\gwmu3\gbbr31\gnl
\gvac1\grcm\gwmu3\gnl
\gvac1\gcl1\gdnot\tau\glmptb\grmpb\gcl1\gnl
\gvac1\gcl1\gcl1\gcl1\gcl1\gnl
\gvac1\gob2{L}\gob2{L}
\gend$$
where the 2nd step uses (\ref{gammanabla}).   Now, we check the axiom for counit. First, we can see $\varepsilon$ is left $B^e$-linear.
    $$(\varepsilon\diamond_{B}\id)\circ\Delta=\gbeg6{10}
    \got3{L}\gnl
    \gvac1\gcl1\gnl
    \glmpb\gnot\iota\gcmpt\grmpb\gnl
    \grcm\gcl1\gnl
    \gcl1\gcn1113\gcn2115\gnl
    \gcl1\glmpb\gnot\tau\gcmpt\grmpb\gcn2111\gnl
    \gcl1\gcl1\gvac1\gcl1\gcl1\gnl
   \gmu\gvac1\gcl1\gcl1\gnl
   \gwmuh427\gcl1\gnl
   \gvac1\gvac1\gob3{L}
    \gend=\gbeg5{7}
    \got3{L}\gnl
    \gvac1\gcl1\gnl
    \glmpb\gnot\iota\gcmpt\grmpb\gnl
    \gcl3\gvac1\gcl3\gnl
    \gob3{L}\gend$$
    $$(\id\diamond_{B}\varepsilon)\circ\Delta=\gbeg6{9}
    \got3{L}\gnl
    \gvac1\gcl1\gnl
    \glmpb\gnot\iota\gcmpt\grmpb\gnl
    \grcm\gcl1\gnl
    \gcl1\gcn1113\gcn2115\gnl
    \gcl1\glmpb\gnot\tau\gcmpt\grmpb\gcn2111\gnl
    \gcl1\gcl1\gvac1\gmu\gnl
    \gcl1\gwmuh316\gnl
   \gvac1\gob1{L}
    \gend=\gbeg5{7}
    \got3{L}\gnl
    \gvac1\gcl1\gnl
    \glmpb\gnot\iota\gcmpt\grmpb\gnl
    \gcl3\gvac1\gcl3\gnl
    \gob3{L}\gend$$
    for both of them we use (\ref{gammaeta}).
    It is not hard to show $\varepsilon(X\varepsilon(Y))=\varepsilon(XY)=\varepsilon(X\overline{\varepsilon(Y)})$, for any $X,Y\in L$.
    
\end{proof}
\begin{Rem}
From the above proof we can see that $P$ is a left $L(P, H)$-comodule algebra in $\B$ with the coaction.
\[\delta(p)=p\z\ot p\on\tuno{}\ot p\on\tdue{}\in L(P,H)\times_{B}P.\]
In \cite[Theorem 4.3]{schau2}, there is a proof using a different method showing that the coproduct is coassociative and an algebra map, although it is for the `non-oid' case of a Galois object. Here we prove it in a more concrete way in order to help observe the balanced tensor product.
\end{Rem}

\begin{Thm}\label{thm. braided ES Hopf algebroid}
Let $B\subset P$ be a faithfully flat $H$-Galois extension in $\B$.
  Then $L=L(P,H)$ is a left $B$-Hopf algebroid in $\B$. In addition, if the antipode of $H$ is bijective, then $L$ is a $B$-Hopf algebroid in $\B$.  More precisely, the translation maps are given by
  $$\gbeg 45
            \got 3{L}\gnl
            \gvac1\gcl1\gnl
            \glmpb\gnot{+-}\gcmpt\grmpb\gnl
            \gcl1\gvac1\gcl1\gnl
            \gob3{L\otimes_{\BB}L}
            \gend=\gbeg 4{10}
            \got 3{L}\gnl
            \gvac1\gcl1\gnl
            \glmpb\gnot{\iota}\gcmpt\grmpb\gnl
            \gcl1\gvac1\grcm\gnl
            \gcl1\gvac1\gcl1\gcn2113\gnl
            \gcl1\gvac1\gcl1\glmpb\gnot{\tau}\gcmpt\grmpb\gnl
            \gcl1\gvac1\gcl1\gibbr31\gnl
            \gcl1\gvac1\gibr\gvac1\gcl1\gnl
            \gcl1\gvac1\gcl1\gcl1\gvac1\gcl1\gnl
            \gvac1\gvac1\gvac1\gob1{L\ot_{\BB}L}
            \gend$$
             $$\gbeg 45
            \got 3{L}\gnl
            \gvac1\gcl1\gnl
            \glmpb\gnot{[+][-]}\gcmpt\grmpb\gnl
            \gcl1\gvac1\gcl1\gnl
            \gob3{L\otimes_{B}L}
            \gend=\gbeg 4{11}
            \gvac1\gvac1\got 3{L}\gnl
            \gvac1\gvac1\gvac1\gcl1\gnl
            \gvac1\gvac1\glmpb\gnot{\iota}\gcmpt\grmpb\gnl
            \gvac1\gvac1\grcm\gcl1\gnl
            \gvac1\gvac1\gibr\gcl1\gnl
            \gvac1\gcn1131\gvac1\gibr\gnl
            \gvac1\gmp{\hat{S}}\gvac1\gcl1\gcl1\gnl
            \glmpb\gnot{\tau}\gcmpt\grmpb\gcl1\gcl1\gnl
            \gcl1\gvac1\gibr\gcl1\gnl
            \gcl1\gvac1\gcl1\gcl1\gcl1\gnl
\gvac1\gvac1\gob1{L\ot_{B}L}
            \gend$$
\end{Thm}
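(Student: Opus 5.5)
The plan is to show that the proposed translation maps really invert $\lambda$ and $\mu$. By the discussion after the definition of (anti-)translation maps, it is enough to produce maps $X\mapsto X_{+}\ot X_{-}$ and $X\mapsto X_{[+]}\ot X_{[-]}$ that are well defined into $L\ot_{\BB}L$ and $L\ot_{B}L$. Then one defines $\lambda^{-1}$ and $\mu^{-1}$ by the displayed formulas and checks both composites with $\lambda$, respectively $\mu$, are the identity. Concretely, for $X=x\ot\ol y\in L\subset P\ot\ol P$ we have
\[
X_{+}\ot_{\BB}X_{-}=\bigl(x\ot \tuno{(y\on)}\bigr)\ot_{\BB}\bigl(\tdue{(y\on)}\ot y\z\bigr),
\]
with the braidings in the picture placed suitably. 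The anti-translation map is built the same way, from the coaction on the first leg $x$ and $\tau\circ\hat S$.

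The first step is well-definedness. Using the faithful flatness of $P$ over $B$, I regard $L\ot_{\BB}L$ and $L\ot_BL$ as subobjects of $P\ot P\ot P\ot P$ modulo the middle balancing. Each leg must then be $H$-coinvariant, and the middle balancing must be respected.
\begin{itemize}
\item Coinvariance of each leg follows from (\ref{delre}) and (\ref{delli}), together with the coinvariance of $X$ under the codiagonal coaction. This is the same computation the previous Lemma used to show $\Delta$ lands in $L\times_B L$.
\item Well-definedness on $P\ot_B P$ uses the first equation of (\ref{gammanabla}).
\item For the anti-translation map I also use that $\hat S$ is an anti-coalgebra map, and the antipode identities recorded in the Example, to convert (\ref{delli}) into a statement about $\tau\circ\hat S$.
\end{itemize}

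Next I verify $\lambda\circ\lambda^{-1}=\id$ and $\lambda^{-1}\circ\lambda=\id$. For the first, the coproduct of $X_{+}$ inserts a $\tau(x\on)$ between the legs, and multiplying by $X_{-}$ collapses $\tau(h)$ against a product via the second equation of (\ref{gammaeta}). The leftover $\tau(y\on)$-terms then cancel using the second equation of (\ref{cantau}), $p\z\tuno{(p\on)}\ot\tdue{(p\on)}=1\ot p$. For the second, $\lambda^{-1}\lambda(X\ot Y)$ produces $\tau$ applied to a product. I would expand this with the second equation of (\ref{gammanabla}), which splits $\tau(hh')$ as a braided product of two translations. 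After that, (\ref{cantau}) again contracts the pieces, returning $X\ot_{\BB}Y$ once the $\BB$-balancing is used to move $\tuno{}\tdue{}$ across. The computation for $\mu$ is parallel, with coinvariance of $X$ used to swap which leg carries the coaction.

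The main obstacle will be the anti-translation map. There the braiding in $\B$ and $\hat S$ interact, and one needs a braided identity of the form
\[
\hat S(h\t)\tuno{}\ot\hat S(h\t)\tdue{}\,h\on\ \text{(with braidings)}=\ldots,
\]
that is, the "right-handed" analogue of (\ref{cantau}). I would try to derive it by applying $\can$ and using $\hat S(h\t)_{\alpha}h\on{}^{\alpha}=\varepsilon(h)1$, the first displayed identity for $\hat S$ in the Example. Tracking the braiding over-/under-crossings graphically is where errors are most likely, so I would check the first statement (left Hopf, no bijectivity of $S$ needed) fully before attempting $\mu$.
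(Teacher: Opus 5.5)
Your route is the paper's: show that both legs of the (anti-)translation map are $H$-coinvariant using (\ref{delre}), (\ref{delli}), the coinvariance of $X$ and the anti-coalgebra property of $\hat S$, then verify the inverse identities. The paper writes out only the anti-translation map. It checks the two identities in (\ref{bydef2}), which by the discussion after the definition of the translation maps amount to your two composites.

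However, the one explicit formula you write down is wrong, and the well-definedness step fails for it. In the diagram, the first inverse braiding swaps $\tuno{}$ and $\tdue{}$, and the second moves $\tdue{}$ past $y\z$. So
\[
X_{+}\ot_{\BB}X_{-}=\bigl(x\ot \tdue{(y\on)}\bigr)\ot_{\BB}\bigl(y\z\ot \tuno{(y\on)}\bigr)
\]
up to these braidings. This agrees with the explicit formula for $(p\ot q)_{+}\ot(p\ot q)_{-}$ in the Lemma for $\B={}_K\cM$. With your assignment the legs do not lie in $L$. Take vector spaces with the trivial braiding, $P=H=kG$, $B=k$, so that $\tau(h)=S(h\on)\ot h\t$, and $X=g\ot g^{-1}\in L$ with $g^2\neq 1$. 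Your formula gives $X_+=g\ot g$, whose codiagonal coaction is $g\ot g\ot g^{2}$. The correct formula gives $(g\ot g^{-1})\ot_{\BB}(g^{-1}\ot g)$, which is $g_+\ot g_-$ under $L\cong kG$.

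Your own sketch of $\lambda\circ\lambda^{-1}$ in fact only works with the corrected formula. It is the pair $\tau(y\on)$ that collapses under the opposite product in the second leg, by the second equation of (\ref{gammaeta}). What remains, $(x\z\ot\tuno{(x\on)})\ot(\tdue{(x\on)}y\ot 1)$ up to braidings, is identified with $X\diamond_B 1$ using the coinvariance of $X$ (for instance after applying $\id\ot\can$ and (\ref{delre})). It is not obtained by contracting $\tau(y\on)$-terms with (\ref{cantau}).

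For the anti-translation map you do not need a new ``right-handed'' analogue of (\ref{cantau}). As written, your proposed identity also multiplies $\tdue{}\in P$ by $h\on\in H$, which is not defined. In $\mu(X_{[+]}\ot X_{[-]})$, the coproduct of $X_{[+]}=\tuno{(\hat S(x\on))}\ot y$ requires the coaction on $\tuno{(\hat S(x\on))}$. By (\ref{delli}), the $H$-part of this coaction carries an $S$ that cancels the $\hat S$. The translation pair coming from $\tau(\hat S(x\on))$ then collapses by (\ref{gammaeta}), so the coproduct of $L$ leaves $\tau(x\on)$. The remainder $x\z\tuno{(x\on)}\ot\tdue{(x\on)}$ equals $1\ot x$ by (\ref{cantau}), giving $1\diamond_B X$. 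The other identity, $X\on X\t{}_{[+]}\ot X\t{}_{[-]}=X\ot 1$ with braidings, uses (\ref{delre}) for the coaction on $\tdue{(x\on)}$ together with (\ref{gammaeta}). Once the formula is read correctly, your plan goes through with the lemmas already in the paper.
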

\begin{proof}
We only check the anti-translation map. To check the left leg is $H$-coinvariant, we have
$$\gbeg 5{14}
            \gvac1\gvac1\got 3{L}\gnl
            \gvac1\gvac1\gvac1\gcl1\gnl
            \gvac1\gvac1\glmpb\gnot{\iota}\gcmpt\grmpb\gnl
            \gvac1\gvac1\grcm\gcl1\gnl
            \gvac1\gvac1\gibr\gcl1\gnl
            \gvac1\gcn1131\gvac1\gibr\gnl
            \gvac1\gmp{\hat{S}}\gvac1\gcl1\gcl1\gnl
            \glmpb\gnot{\tau}\gcmpt\grmpb\gcl1\gcl1\gnl
            \gcl1\gvac1\gibr\gcl1\gnl
            \grcm\gcl1\gcn1213\gcn1213\gnl
            \gcl1\gcl1\grcm\gnl
            \gcl1\gbr\gcl1\gcl1\gcl1\gnl
            \gcl1\gcl1\gmu\gcl1\gcl1\gnl
            \gend
            =\gbeg 6{15}
            \gvac1\gvac1\got 3{L}\gnl
            \gvac1\gvac1\gvac1\gcl1\gnl
            \gvac1\gvac1\glmpb\gnot{\iota}\gcmpt\grmpb\gnl
            \gvac1\gvac1\grcm\gcn2113\gnl
            \gvac1\gvac1\gibr\gvac1\gcl1\gnl
            \gvac1\gcn1131\gvac1\gcn2113\gcl1\gnl \gvac1\gmp{\hat{S}}\gvac1\gvac1\gibr\gnl           \gwcm3\gvac1\gcl1\gcl1\gnl         \gcl1\glmpb\gnot{\tau}\gcmpt\grmpb\gcl1\gcl1\gnl
            \gmp{S}\gcl1\gvac1\gibr\gcl1\gnl
            \gbr\gvac1\gcl1\gcn1213\gcn1213\gnl
            \gcl1\gcn2113\grcm\gnl
            \gcl1\gvac1\gbr\gcl1\gcl1\gcl1\gnl
            \gcl1\gvac1\gcl1\gmu\gcl1\gcl1\gnl
            \gend=\gbeg 6{15}
            \gvac1\gvac1\got 3{L}\gnl
            \gvac1\gvac1\gvac1\gcl1\gnl
            \gvac1\gvac1\glmpb\gnot{\iota}\gcmpt\grmpb\gnl
            \gvac1\gvac1\grcm\gcn2113\gnl
            \gvac1\gvac1\gibr\gvac1\gcl1\gnl
            \gvac1\gcn1131\gvac1\gcn2113\gcl1\gnl \gwcm3\gvac1\gibr\gnl           \gibbr31\gvac1\gcl1\gcl1\gnl   \gmp{\hat{S}}\gvac1\gmp{\hat{S}}\gvac1\gcl1\gcl1\gnl      \gcl1\glmpb\gnot{\tau}\gcmpt\grmpb\gcl1\gcl1\gnl
            \gmp{S}\gcl1\gvac1\gibr\gcl1\gnl
            \gbr\gvac1\gcl1\gcn1213\gcn1213\gnl
            \gcl1\gcn2113\grcm\gnl
            \gcl1\gvac1\gbr\gcl1\gcl1\gcl1\gnl
            \gcl1\gvac1\gcl1\gmu\gcl1\gcl1\gnl
            \gend=\gbeg 6{16}
            \gvac1\gvac1\got 3{L}\gnl
            \gvac1\gvac1\gvac1\gcl1\gnl
            \gvac1\gvac1\glmpb\gnot{\iota}\gcmpt\grmpb\gnl
            \gvac1\gvac1\grcm\gcn2113\gnl
            \gvac1\gvac1\gcl1\gcn2113\gcl1\gnl
            \gvac1\gvac1\grcm\gcl1\gcl4\gnl
            \gvac1\gvac1\gibr\gcl1\gnl
            \gvac1\gcn1131\gvac1\gibr\gnl
            \gvac1\gmp{\hat{S}}\gvac1\gcl1\gcl1\gnl
            \glmpb\gnot{\tau}\gcmpt\grmpb\gcl1\gibr\gnl
            \gcl1\gvac1\gibr\gcl1\gcl1\gnl
            \gcl1\gvac1\gcl1\gibr\gcn2113\gnl
            \gcl1\gvac1\gcl1\gcl1\gcn2113\gcn2111\gnl
            \gcl1\gvac1\gcl1\grcm\gcl1\gcl1\gnl
            \gcl1\gvac1\gbr\gcl1\gcl1\gcl1\gnl
            \gcl1\gvac1\gcl1\gmu\gcl1\gcl1\gnl
            \gend=\gbeg 5{14}
            \gvac1\gvac1\got 3{L}\gnl
            \gvac1\gvac1\gvac1\gcl1\gnl
            \gvac1\gvac1\glmpb\gnot{\iota}\gcmpt\grmpb\gnl
            \gvac1\gvac1\grcm\gcl1\gnl
            \gvac1\gvac1\gibr\gcl1\gnl
            \gvac1\gcn1131\gvac1\gibr\gnl
            \gvac1\gmp{\hat{S}}\gvac1\gcl1\gcl1\gnl
            \glmpb\gnot{\tau}\gcmpt\grmpb\gcl1\gcl1\gnl
            \gcl1\gvac1\gibr\gcl1\gnl
            \gcl1\gvac1\gcl1\gcn1113\gcn1113\gnl
            \gcl1\gvac1\gcl1\gu1\gcl1\gcl1\gnl
            \gend$$
            For the other leg, we have 
            $$\gbeg 6{15}
            \gvac1\gvac1\got 3{L}\gnl
            \gvac1\gvac1\gvac1\gcl1\gnl
            \gvac1\gvac1\glmpb\gnot{\iota}\gcmpt\grmpb\gnl
            \gvac1\gvac1\grcm\gcl1\gnl
            \gvac1\gvac1\gibr\gcl1\gnl
            \gvac1\gcn1131\gvac1\gibr\gnl
            \gvac1\gmp{\hat{S}}\gvac1\gcl1\gcl1\gnl
            \glmpb\gnot{\tau}\gcmpt\grmpb\gcl1\gcl1\gnl
            \gcl1\gvac1\gibr\gcn2113\gnl
            \gcl1\gvac1\gcl1\grcm\grcm\gnl
            \gcl1\gvac1\gcl1\gcl1\gbr\gcl1\gnl
            \gcl1\gvac1\gcl1\gcl1\gcl1\gmu\gnl
            \gend=
            \gbeg 6{15}
            \gvac1\gvac1\gvac1\got 3{L}\gnl
            \gvac1\gvac1\gvac1\gvac1\gcl1\gnl
            \gvac1\gvac1\gvac1\glmpb\gnot{\iota}\gcmpt\grmpb\gnl
            \gvac1\gvac1\gvac1\grcm\gcl1\gnl
            \gvac1\gvac1\gvac1\gibr\gcl1\gnl
            \gvac1\gvac1\gcn1131\gvac1\gibr\gnl
            \gvac1\gvac1\gmp{\hat{S}}\gvac1\gcl1\gcl1\gnl
            \gvac1\gwcm3\gcl1\gcl1\gnl
            \gvac1\gcl1\gvac1\gibr\gcl1\gnl
            \glmpb\gnot{\tau}\gcmpt\grmpb\gcl1\gcl1\gcl1\gnl
            \gcl1\gvac1\gibr\gcl1\gcl1\gnl
            \gcl1\gvac1\gcl1\gcl1\gcl1\grcm\gnl
            \gcl1\gvac1\gcl1\gcl1\gbr\gcl1\gnl
            \gcl1\gvac1\gcl1\gcl1\gcl1\gmu\gnl
            \gend= \gbeg 6{15}
            \gvac1\gvac1\gvac1\got 3{L}\gnl
            \gvac1\gvac1\gvac1\gvac1\gcl1\gnl
            \gvac1\gvac1\gvac1\glmpb\gnot{\iota}\gcmpt\grmpb\gnl
            \gvac1\gvac1\gvac1\grcm\gcl1\gnl
            \gvac1\gvac1\gvac1\gibr\gcl1\gnl
            \gvac1\gvac1\gcn1131\gvac1\gibr\gnl
            \gvac1\gwcm3\gcl1\gcl1\gnl
            \gvac1\gibbr31\gcl1\gcl1\gnl
            \gvac1\gmp{\hat{S}}\gvac1\gmp{\hat{S}}\gcl1\gcl1\gnl
            \gvac1\gcl1\gvac1\gibr\gcl1\gnl
            \glmpb\gnot{\tau}\gcmpt\grmpb\gcl1\gcl1\gcl1\gnl
            \gcl1\gvac1\gibr\gcl1\gcl1\gnl
            \gcl1\gvac1\gcl1\gcl1\gcl1\grcm\gnl
            \gcl1\gvac1\gcl1\gcl1\gbr\gcl1\gnl
            \gcl1\gvac1\gcl1\gcl1\gcl1\gmu\gnl
            \gend
            =\gbeg6{16}\gvac1\gvac1\gvac1\got 3{L}\gnl
            \gvac1\gvac1\gvac1\gvac1\gcl1\gnl
            \gvac1\gvac1\gvac1\glmpb\gnot{\iota}\gcmpt\grmpb\gnl
            \gvac1\gvac1\gvac1\grcm\gcl1\gnl
            \gvac1\gvac1\gcn2131\gmp{\hat{S}}\gcl1\gnl
            \gvac1\gvac1\grcm\gcl1\gcl1\gnl
            \gvac1\gcn2131\gmp{\hat{S}}\gcl1\gcl1\gnl
            \gvac1\grcm\gcl1\gcl1
            \gcl1\gnl
            \gvac1\gcl1\gibr\gcl1\gcl1\gnl
            \gvac1\gcl1\gmu\gcl1\gcl1\gnl
            \gvac1\gcl1\gcn2123\gcl1\gcl1\gnl
            \gvac1\gcl1\gvac1\gibr\gcl1\gnl
            \gvac1\gibbr31\gcl1\gcl1\gnl
            \glmpb\gnot{\tau}\gcmpt\grmpb\gcl1\gibr\gnl
            \gcl1\gvac1\gcl1\gibr\gcl1\gnl
            \gcl1\gvac1\gibr\gcl1\gcl1\gnl
            \gend=\gbeg 5{15}
            \gvac1\gvac1\got 3{L}\gnl
            \gvac1\gvac1\gvac1\gcl1\gnl
            \gvac1\gvac1\glmpb\gnot{\iota}\gcmpt\grmpb\gnl
            \gvac1\gvac1\grcm\gcl1\gnl
            \gvac1\gvac1\gibr\gcl1\gnl
            \gvac1\gcn1131\gvac1\gibr\gnl
            \gvac1\gmp{\hat{S}}\gvac1\gcl1\gcl1\gnl
            \glmpb\gnot{\tau}\gcmpt\grmpb\gcl1\gcl1\gnl
            \gcl1\gvac1\gibr\gcl1\gnl
 \gcl1\gvac1\gcl1\gcl1\gcl1\gu1\gnl
            \gend
            $$
            as a result, the map is well defined. Now, we will show the anti-translation indeed determines the inverse of $\mu$. On the one hand, we have
             $$\gbeg 6{16}
            \gvac1\gvac1\gvac1\got 3{L}\gnl
            \gvac1\gvac1\gvac1\gvac1\gcl1\gnl
            \gvac1\gvac1\gvac1\glmpb\gnot{\iota}\gcmpt\grmpb\gnl
            \gvac1\gvac1\gvac1\grcm\gcl1\gnl
            \gvac1\gvac1\gvac1\gibr\gcl1\gnl
            \gvac1\gvac1\gcn1131\gvac1\gibr\gnl
            \gvac1\gvac1\gmp{\hat{S}}\gvac1\gcl1\gcl1\gnl
            \gvac1\glmpb\gnot{\tau}\gcmpt\grmpb\gcl1\gcl1\gnl
            \gvac1\grcm\gibr\gcl1\gnl
            \gcn1131\gcn1131\gvac1\gcl1\gcl1\gcl1\gnl
\gcl1\gdnot\tau\glmptb\grmpb\gcl1\gcl1\gcl1\gnl
\gcl1\gcl1\gcl1\gbr\gcl1\gnl
\gcl1\gcl1\gbr\gbr\gnl
\gcl1\gbr\gbr\gcl1\gnl
\gmu\gbr\gcl1\gcl1\gnl
\gcn2122\gmu\gcl1\gcl1\gnl
            \gend
            =
            \gbeg 6{16}
            \gvac1\gvac1\gvac1\got 3{L}\gnl
            \gvac1\gvac1\gvac1\gvac1\gcl1\gnl
            \gvac1\gvac1\gvac1\glmpb\gnot{\iota}\gcmpt\grmpb\gnl
            \gvac1\gvac1\gvac1\grcm\gcl1\gnl
            \gvac1\gvac1\gvac1\gibr\gcl1\gnl
            \gvac1\gvac1\gcn1131\gvac1\gcl1\gcl1\gnl
            \gvac1\gvac1\gmp{\hat{S}}\gvac1\gcl1\gcl1\gnl
            \gvac1\glmpb\gnot{\tau}\gcmpt\grmpb\gcl1\gcl1\gnl
            \gvac1\grcm\gcl1\gcl1\gcl1\gnl
            \gcn1131\gcn1131\gvac1\gcl1\gcl1\gcl1\gnl
\gcl1\gdnot\tau\glmptb\grmpb\gcl1\gcl1\gcl1\gnl
\gcl1\gcl1\gbr\gcl1\gcl1\gnl
\gcl1\gbr\gbr\gcl1\gnl
\gmu\gbr\gcl1\gcl1\gnl
\gcn2122\gmu\gcl1\gcl1\gnl
            \gend=
            \gbeg 5{13}
            \gvac1\gvac1\got 3{L}\gnl
            \gvac1\gvac1\gvac1\gcl1\gnl
            \gvac1\gvac1\glmpb\gnot{\iota}\gcmpt\grmpb\gnl
            \gvac1\gvac1\grcm\gcl1\gnl
            \gvac1\gvac1\gibr\gcl1\gnl
            \gvac1\gcn1131\gvac1\gcl1\gcl1\gnl
            \gvac1\gmp{\hat{S}}\gvac1\gcl1\gcl1\gnl
            \gvac1\gbbr31\gcl1\gnl
            \gvac1\gcl1\gvac1\gmp{S}\gcl1\gnl
            \gvac1\gcl1\gdnot\tau\glmpb\grmptb\gcl1\gnl
            \gu1\gmu\gcl1\gcl1\gnl
            
            \gend=
            \gbeg 5{9}
            \gvac1\gvac1\got 3{L}\gnl
            \gvac1\gvac1\gvac1\gcl1\gnl
            \gvac1\gvac1\glmpb\gnot{\iota}\gcmpt\grmpb\gnl
            \gvac1\gvac1\grcm\gcl1\gnl
            \gvac1\gcn2131\gcl1\gcl1\gnl
            \gvac1\gcl1\gdnot\tau\glmpb\grmptb\gcl1\gnl
            \gu1\gmu\gcl1\gcl1
            \gend=
            \gbeg 5{6}
            \gvac1\gvac1\got 3{L}\gnl
            \gvac1\gvac1\gvac1\gcl1\gnl
            \gvac1\gvac1\glmpb\gnot{\iota}\gcmpt\grmpb\gnl
            \gu1\gu1\gcl1\gvac1\gcl1
            \gend$$

     where we use (\ref{delli}) and (\ref{gammaeta})  in the second step.       On the other hand, we have
         $$\gbeg8{20}
     \gvac1\gvac1\gvac1\got3{L}\gnl
     \gvac1\gvac1\gvac1\gvac1\gcl1\gnl
     \gvac1\gvac1\gvac1\glmpb\gnot\iota\gcmpt\grmpb\gnl
     \gvac1\gvac1\gvac1\grcm\gcl1\gnl
     \gvac1\gvac1\gvac1\gcl1\gcn1113\gcn2115\gnl
     \gvac1\gvac1\gvac1\gcl1\glmpb\gnot\tau\gcmpt\grmpb\gcn2111\gnl
     \gvac1\gvac1\gvac1\gcl1\gibbr31\gcl1\gnl
     \gvac1\gvac1\gvac1\gibr\gvac1\gibr\gnl
     \gvac1\gvac1\gcn2131\gibbr31\gcl6\gnl
    \gvac1\gvac1\grcm\gcl1\gvac1\gcl5\gnl
            \gvac1\gvac1\gibr\gcl1\gnl
            \gvac1\gcn1131\gvac1\gibr\gnl
            \gvac1\gmp{\hat{S}}\gvac1\gcl1\gcl1\gnl
            \glmpb\gnot{\tau}\gcmpt\grmpb\gcl1\gcl1\gnl
            \gcl1\gvac1\gibr\gbbr31\gcl1\gnl
            \gcl1\gvac1\gcl1\gmu\gvac1\gbr\gnl
            \gcl1\gvac1\gcl1\gcn2122\gvac1\gmu\gnl
    \gend=
    \gbeg8{20}
     \gvac1\gvac1\gvac1\got3{L}\gnl
     \gvac1\gvac1\gvac1\gvac1\gcl1\gnl
     \gvac1\gvac1\gvac1\glmpb\gnot\iota\gcmpt\grmpb\gnl
     \gvac1\gvac1\gvac1\grcm\gcl1\gnl
     \gvac1\gvac1\gvac1\gcl1\gcn1114\gcn2115\gnl
     \gvac1\gvac1\gvac1\gcl1\gvac1\gcmu\gcl1\gnl
     \gvac1\gvac1\gvac1\gcl1\gdnot\tau\glmpb\grmptb\gcl1\gcl1\gnl
     \gvac1\gvac1\gvac1\gcl1\gcl1\gibr\gcl1\gnl
     \gvac1\gvac1\gvac1\gcl1\gibr\gcl1\gcl1\gnl
     \gvac1\gvac1\gvac1\gibr\gmu\gcl1\gnl
     \gvac1\gvac1\gcn2131\gcn1112\gcn2123\gcl1\gnl
     \gvac1\gvac1\gmp{\hat{S}}\gvac1\gcn2122\gcl1\gcl1\gnl 
     \gvac1\glmpb\gnot\tau\gcmpt\grmpb\gcn2122\gibr\gnl
     \gvac1\gcl1\gvac1\gcl1\gcn2121\gcl1\gcl1\gnl
     \gvac1\gcl1\gvac1\gmu\gvac1\gcl1\gcl1\gnl
     \gvac1\gcl1\gvac1\gcn2123\gvac1\gcl1\gcl1\gnl
     \gvac1\gcl1\gvac1\gvac1\gibbr31\gcl1\gnl
     \gend=
         \gbeg6{11}
     \gvac1\gvac1\gvac1\got3{L}\gnl
     \gvac1\gvac1\gvac1\gvac1\gcl1\gnl
     \gvac1\gvac1\gvac1\glmpb\gnot\iota\gcmpt\grmpb\gnl
     \gvac1\gvac1\gvac1\grcm\gcl1\gnl
 \gvac1\gvac1\gvac1\gibr\gcl1\gnl
  \gvac1\gvac1\gcn2131\gcl1\gcl1\gnl
   \gvac1\gvac1\gmp{\hat{S}}\gvac1\gibr\gnl
   \gvac1\glmpb\gnot\tau\gcmpt\grmpb\gcl1\gcl1\gnl
   \gvac1\gcl1\gvac1\gibr\gcl1\gnl
   \gvac1\gcl1\gvac1\gcl1\gmu\gnl
   \gvac1\gvac1\gob3{L\ot_{B}B}
     \gend=
     \gbeg6{11}
     \gvac1\gvac1\gvac1\got3{L}\gnl
     \gvac1\gvac1\gvac1\gvac1\gcl1\gnl
     \gvac1\gvac1\gvac1\glmpb\gnot\iota\gcmpt\grmpb\gnl
     \gvac1\gvac1\gvac1\grcm\gcl1\gnl
 \gvac1\gvac1\gvac1\gibr\gcl1\gnl
  \gvac1\gvac1\gcn2131\gcl1\gcl1\gnl
   \gvac1\gvac1\gmp{\hat{S}}\gvac1\gcl1\gcl1\gnl
   \gvac1\glmpb\gnot\tau\gcmpt\grmpb\gcl1\gcl1\gnl
   \gvac1\gcl1\gvac1\gmu\gcl1\gnl
   \gvac1\gwmuh416\gcl1\gnl
   \gvac1 \gvac1 \gvac1 \gvac1\got1L
     \gend=\gbeg15\got1L\gnl\gcl3\gnl
     \gob1L\gend$$  
     where we use (\ref{gammaeta})  in the second step and 4th step. 
\end{proof}

\subsubsection{In the case that $\B={}_{K}\cM$ for a quasi-triangular Hopf algebra $(K,R)$}
In this section, we consider  Hopf Galois extensions in the braided category of left modules of a quasi-triangular Hopf algebra. 
\label{example of Hopf fibration}

\begin{Lem}\label{lem. ES of a quasi-triangular Hopf algebra}
Let $(K, R)$ be a quasi-triangular Hopf algebra and $H$ be a Hopf algebra with bijective antipode. If $B=P^{co H}\subset P$ is a faithfully flat $H$-Galois extension in ${}_{K}\cM$, then $L(P,H)=(P\ot P)^{co H}$ is a braided $B$-Hopf algebroid in ${}_{(K, R)}\cM$ with the structure:
\[s(b)=b\ot 1,\qquad t(b)=1\ot b,\]
\[(p\ot q)(p'\ot q')=p (R_{\alpha}\la p')\ot (R_{\beta}\la q')((R^{\beta}R^{\alpha})\la q),\]
\[\Delta(p\ot q)=(p\z\ot p\on\tuno{})\ot (p\on\tdue{}\ot q),\qquad \varepsilon(p\ot q)=pq.\]
\[(p\ot q)_{+}\ot(p\ot q)_{-}=(p\ot (R^{\ol \alpha}\,R^{\ol \beta})\la\,q\on\tdue{})\ot (R_{\ol \alpha}\la q\z\ot R_{\ol \beta}\la q\on\tuno{}) \]
\[(p\ot q)_{[+]}\ot(p\ot q)_{[-]}=(S^{-1}(R^{\ol \gamma}\la p\on)\tuno{}\ot (R^{\ol \alpha}R^{\ol \beta})\la q)\ot (R_{\ol \alpha}\la S^{-1}(R^{\ol \gamma}\la p\on)\tdue{}\ot (R_{\ol \beta}R_{\ol \gamma}) \la p\z),\]
for all $p\ot q\in L(P,H)$ and $b\in B$, where the braiding and its inverse are given by
\[\tau_{VW}(v\ot w)=R_{\alpha}\la w\ot R^{\alpha}\la v,\quad\tau^{-1}_{VW}(w\ot v)=R^{\ol \alpha}\la v\ot R_{\ol \alpha}\la w, \]
where $R=R^{\alpha}\ot R_{\alpha}\in K\ot K$ with its inverse $R^{-1}=R^{\ol \alpha}\ot R_{\ol \alpha}\in K\ot K$.
\end{Lem}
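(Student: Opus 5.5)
The plan is to specialize Theorem~\ref{thm. braided ES Hopf algebroid} to $\B={}_{K}\cM$ rather than reprove anything. The category ${}_{K}\cM$ of left modules over a quasi-triangular Hopf algebra $(K,R)$ is braided monoidal over $k$ and has equalizers and coequalizers. Its braiding is $\tau_{VW}(v\ot w)=R_{\alpha}\la w\ot R^{\alpha}\la v$, with inverse given by $R^{-1}$. Since $H$ has bijective antipode and $B\subset P$ is a faithfully flat $H$-Galois extension in ${}_{K}\cM$, Theorem~\ref{thm. braided ES Hopf algebroid} already says that $L(P,H)$ is a $B$-Hopf algebroid in ${}_{K}\cM$. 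What remains is to translate each graphical formula into Sweedler/$R$-matrix notation, replacing every crossing by the corresponding $R$ or $R^{-1}$ leg acting on the strands that pass over or under.

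\textbf{Algebra structure, source/target, coproduct and counit.} I would first take Proposition~\ref{coHsubalg} and the displayed multiplication of $L$, in which $P$ is identified with $\ol P$. There are two crossings there: $q$ crosses $p'$, and the product of the resulting pair then crosses $q'$. The first crossing gives $p(R_{\alpha}\la p')\ot R^{\alpha}\la q$. For the second, the opposite product $\ol{x}\,\ol{y}=\overline{\nabla\tau(x\ot y)}$ gives $(R_{\beta}\la q')(R^{\beta}\la(R^{\alpha}\la q))$, which equals $(R_\beta\la q')((R^\beta R^\alpha)\la q)$, the stated formula. The source and target maps are $\eta$ restricted to $B\ot 1$ and $1\ot\BB$, so they are read off immediately. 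The coproduct and counit in the Lemma preceding Theorem~\ref{thm. braided ES Hopf algebroid} contain no crossings, so they translate verbatim:
\[
\Delta(p\ot q)=p\z\ot p\on\tuno{}\ot p\on\tdue{}\ot q,\qquad \varepsilon(p\ot q)=pq .
\]

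\textbf{Translation maps.} Next I read off $X_{+}\ot X_{-}$ from the diagram in Theorem~\ref{thm. braided ES Hopf algebroid}. After $\iota$ we have $p\ot q$. Coacting on $q$ and applying $\tau$ to $q\on$ gives $p\ot q\z\ot q\on\tuno{}\ot q\on\tdue{}$. Then two inverse crossings move $q\on\tdue{}$ leftward, first past $q\on\tuno{}$ and then past $q\z$. Writing each $\tau^{-1}$ with its own copy of $R^{-1}$ produces $(R^{\ol\alpha}R^{\ol\beta})\la q\on\tdue{}$ in the second slot, with $R_{\ol\alpha}$ and $R_{\ol\beta}$ acting on $q\z$ and $q\on\tuno{}$ respectively. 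The anti-translation map is handled the same way, but with three inverse crossings:
\begin{itemize}
\item the first crossing is between $p\z$ and $p\on$, labelled $\ol\gamma$;
\item then $\hat S$ and $\tau$ are applied to $R^{\ol\gamma}\la p\on$;
\item the last two crossings move $q$, labelled $\ol\alpha$ and $\ol\beta$.
\end{itemize}
I have to check which strand ends up with which legs. The $R^{\ol\alpha}R^{\ol\beta}$ legs go to $q$, $R_{\ol\alpha}$ goes to $S^{-1}(\cdot)\tdue{}$, and $R_{\ol\beta}R_{\ol\gamma}$ goes to $p\z$. Throughout I use that $K$-linearity of the coaction and of $\tau$ (these are morphisms in ${}_{K}\cM$) lets me move $R$-legs through $\tau$ and $S^{-1}$ when needed.

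\textbf{Main obstacle.} I expect the bookkeeping of the $R$-legs to be the main obstacle, in particular getting the order of the products such as $R^{\ol\alpha}R^{\ol\beta}$ versus $R^{\ol\beta}R^{\ol\alpha}$. To settle it I would use the quasi-triangularity identities $(\Delta\ot\id)R=R_{13}R_{23}$ and $(\id\ot\Delta)R=R_{13}R_{12}$, which express the braiding of $V$ with a tensor product as composite crossings. If in doubt, I would verify the translation-map formula directly with identity (\ref{bydef1}): apply $\lambda$ to $X_{+}\ot X_{-}$, use (\ref{cantau}) to collapse $q\z\,q\on\tuno{}\ot q\on\tdue{}$, and check that the $R$-factors cancel against $R^{-1}$ to return $X\ot 1$. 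The anti-translation formula can be checked the same way using (\ref{bydef2}).
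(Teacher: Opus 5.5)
\paragraph{The gap.} Your translation of the diagrams is correct. The product, $\Delta$, $\varepsilon$ and both translation maps come out exactly as in the statement, with the $R^{-1}$-legs on the right strands. No quasi-triangularity identities are needed for this, since each crossing simply contributes its own copy of $R^{-1}$.

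The problem is the step you treat as free: that Theorem~\ref{thm. braided ES Hopf algebroid} applies verbatim because ${}_K\cM$ has equalizers and coequalizers. That argument does show that $B=P^{co H}$, $L(P,H)=(P\ot P)^{co H}$ and $L\diamond_B L$ are objects of ${}_K\cM$. But the codomain of $\Delta$ is the Takeuchi product $L\times_B L$, which is not built from equalizers and coequalizers alone. It is cut out of $L\diamond_B L$ by the condition $x\,\ol b=x\,b$ for all $b\in B$. Until you know this subspace is $K$-stable, $L\times_B L$ is not an object of ${}_K\cM$. Then ``$\Delta$ is a morphism in ${}_K\cM$'', and hence ``$L$ is a $B$-bialgebroid in ${}_K\cM$'', are not even meaningful, so the theorem cannot just be quoted.

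This is exactly what the paper's proof is about. It checks that $B$ and $L(P,H)$ are $K$-submodules and that the $K$-action on $L\diamond_B L$ is well defined. Then, as the one non-formal point, it shows by direct computation that the diagonal $K$-action preserves the Takeuchi condition, using $R\Delta(k)=\Delta^{\op}(k)R$ and $(\Delta\ot\id)R=R_{13}R_{23}$. Only after that does it invoke the theorem, and it leaves the explicit formulas (your main effort) unexpanded.

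\paragraph{How to fill it.} The missing step is short. Let $f,g\colon (L\diamond_B L)\ot B\to L\diamond_B L$ be given by $f(x\ot b)=x\,\ol b$ and $g(x\ot b)=x\,b$. Both are $K$-linear, being composites of the actions and the braiding. Suppose $f(x\ot b)=g(x\ot b)$ for all $b$. Then for every $k\in K$,
\[f((k\la x)\ot b)=k\on\la f(x\ot S(k\t)\la b)=k\on\la g(x\ot S(k\t)\la b)=g((k\la x)\ot b).\]
So $L\times_B L$ is a $K$-submodule. Equivalently, it is the equalizer of the two curried maps $L\diamond_B L\to\Hom(B,L\diamond_B L)$ into the internal hom of ${}_K\cM$. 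With this added, your specialization argument together with the formula translation is a complete proof.
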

\begin{proof} First observe that a Hopf Galois extension in ${}_{K}\cM$ means $H$ is a braided Hopf algebra in ${}_{K}\cM$ and $P$ is a left $K$-module algebra such that 
\begin{align}\label{k equ action}
(k\la p)\z\ot (k\la p)\on=(k\on \la p\z)\ot (k\t\la p\on),
\end{align}
for every $k\in K$ and $p\in P$. 
We can see $B$ is a submodule algebra of $P$. Indeed, by (\ref{k equ action}), $(k\la b)\z\ot (k\la b)\on=(k\on\la b\z)\ot k\t\la b\on=k\la b\ot 1$. 

To check $L(P, H)$ is a left $K$-module, for every $p\ot q\in L(P,H)$, we have
\begin{align*}
\delta(k\la(p\ot q))=&\delta(k\on\la p\ot k\t\la q)\\
=&k\on\la p\z\ot (R_{\alpha}k\th)\la q\z\ot ((R^{\alpha} k\t)\la p\on)\, (k\fo\la q\on)\\
=&k\on\la p\z\ot (k\t R_{\alpha})\la q\z\ot ((k\th R^{\alpha} )\la p\on)\, (k\fo\la q\on)\\
=&k\on\la p\z\ot (k\t R_{\alpha})\la q\z\ot k\th\la  ((R^{\alpha} \la p\on)\, q\on)\\
=&k\la(p\ot q)\ot 1.
\end{align*}
We can also see $L(P, H)\times_{B} L(P, H)$ is also a left $K$-module. Indeed, given $(p\ot q)\ot (p'\ot q')\in L(P, H)\diamond_{B}L(P, H)$, the left $K$-action is given by
\[k\la ((p\ot q)\ot (p'\ot q'))=k\on\la(p\ot q)\ot k\t\la(p'\ot q')=(k\on\la p\ot k\t\la q)\ot (k\th\la p'\ot k\fo\la q').\]
Since the balance tensor product in  $L(P, H)\diamond_{B}L(P, H)$ means 
\[(p\ot qb)\ot (p'\ot q')=(p\ot q)\ot (bp'\ot q'),\]
the left $K$-action on $L(P, H)\diamond_{B}L(P, H)$ is well defined. Now, we  check the action preserves the Takeuchi product. Let $(p\ot q)\ot (p'\ot q')\in L(P, H)\times_{B}L(P, H)$, which is equivalent to the condition that
\[(p\ot q)\ot (p'b\ot q')=(p\ot (R_{\alpha}R_{
\beta
}\la b)(R^{\alpha}\la q))\ot ((R^{\beta}\la p')\ot q'),\]
for all $b\in B$.
We want to check the element under the left $K$-action still satisfies the above condition. Indeed,
\begin{align*}
k\on\la& p\ot k\t\la q\ot (k\th\la p')\,b\ot k\fo\la q'\\
=&k\on\la p\ot k\t\la q\ot k\th\la (p' S(k\fo)\la b)\ot k\fiv\la q'\\
=&k\on\la p\ot k\t\la ((R_{\alpha}R_{
\beta
}S(k\fo))\la b)(R^{\alpha}\la q))\ot (k\th R^{\beta})\la p'\ot k\fiv\la q'\\
=&k\on\la p\ot (k\t R_{\alpha}R_{
\beta
}S(k\fiv)\la b)(k\th R^{\alpha}\la q))\ot (k\fo R^{\beta})\la p'\ot k\si\la q'\\
=&k\on\la p\ot (k\t R_{\alpha}S(k\fiv)\la b)(k\th R^{\alpha}\on\la q))\ot (k\fo R^{\alpha}\t)\la p'\ot k\si\la q'\\
=&k\on\la p\ot (R_{\alpha}k\fo S(k\fiv)\la b)(R^{\alpha}\on k\t\la q))\ot (R^{\alpha}\t k\th)\la p'\ot k\si\la q'\\
=&k\on\la p\ot (R_{\alpha}\la b)(R^{\alpha}\on k\t\la q))\ot (R^{\alpha}\t k\th)\la p'\ot k\fo\la q'\\
=&k\on\la p\ot (R_{\alpha}R_{\beta}\la b)(R^{\alpha} k\t\la q))\ot (R^{\beta} k\th)\la p'\ot k\fo\la q'.
\end{align*}
The rest of the Lemma can be proved by Theorem \ref{thm. braided ES Hopf algebroid}.
\end{proof}

% \begin{Rem}
% Since the translation map is left $K$-linear, we can see the coproduct is also a $K$-module map by using the fact that $k\on\la h\tuno{}\ot k\t\la h\tdue{}=  (k\la h)\tuno{}\ot (k\la h)\tdue{}=\varepsilon(k)h\tuno{}\ot h\tdue{}$. As a result,
% \begin{align*}
% (k\la (p\ot q))\on\ot (k\la (p\ot q))\t=&(k\on \la p\ot k\t\la q)\on\ot(k\on \la p\ot k\t\la q)\t\\
% =&k\on\la p\z\ot p\on\tuno{}\ot p\on\tdue{}\ot k\t\la q\\
% =&k\on\la p\z\ot k\t\la p\on\tuno{}\ot k\th\la p\on\tdue{}\ot k\fo\la q\\
% =&k\la (\Delta(p\ot q)).
% \end{align*}

% \end{Rem}

\section{Braided Lie-Rinehart algebras and their Universal enveloping algebras}

As a generalization of \cite{ALP24,ALP23,ALP25}, we have the definition
\begin{Def}
A braided Lie algebra $\mathfrak{g}$ in $\B$ is a vector space together with a bilinear map $[-,-]:\mathfrak{g}\ot\mathfrak{g}\to \mathfrak{g}$ in $\B$ such that 
\begin{itemize}
\item[(i)] $[-,-]$ is  braided antisymmetry: $[u,v]=-[v_{\alpha},u^{\alpha}]$.
\item[(ii)] $[-,-]$ satisfies braided Jacobi identity: $[u,[v,w]]=[[u,v],w]+[v_{\alpha},[u^{\alpha},w]]$,
\end{itemize}
for all $u,v,w\in\g$.
\end{Def}
A morphism $f:\g\to \h$ between two braided Lie algebras is a morphism in $\B$ such that $f([u,v])=[f(u),f(v)]$. 
Furthermore, we define a braided Lie-Rinehart algebra
\begin{Def}
Given a braided commutative algebra $A$ in $\B$. A braided Lie-Rinehart algebra over $A$ in $\B$ consists of a braided Lie algebra $\g$ and a map $\rho:\g\ot A\to A,\quad u\ot a\mapsto \rho_{u}(a)$ in $\B$ such that 
\begin{itemize}
\item[(i)] $\g$ is a left $A$-module in $\B$,
\item[(ii)] $\rho_{u}(ab)=\rho_{u}(a)b+a_{\alpha}\rho_{u^{\alpha}}(b)$,
\item[(iii)] $\rho_{[u,v]}=\rho_{u}\circ\rho_{v}-\rho_{v_{\alpha}}\circ\rho_{u^{\alpha}}$,
\item[(iv)] $\rho_{au}(b)=a\rho_{u}(b)$,
\item[(v)] $[u,av]=\rho_{u}(a)v+a_{\alpha}[u^{\alpha},v]$,
\end{itemize}
for all $u,v\in\g$ and $a,b\in A$. We call $\rho$ the anchor and we will use $(A,\g,\rho)$ to denote the braided Lie-Rinehart algebra. If $\rho$ is trivial, we call $(A,\g,\rho)$ a braided $A$-Lie algebra. If $\B$ is a symmetric braided category, we call $(A,\g,\rho)$ a symmetric braided Lie-Rinehart algebra. A morphism between two braided Lie-Rinehart algebras is a left $A$-linear braided Lie algebra morphism in $\B$, which preserves the anchor.
\end{Def}

It is similar to the construction of \cite{BKS24,ES19}, we can construct the  universal enveloping algebras of a symmetric braided Lie-Rinehart algebra $(A,\g,\rho)$. First, define 
\[T_{A}(\g\ot A)=\bigoplus_{n\geq 0}(\g\ot A)^{\otimes_{A}^{n}}:=A \oplus(\g\ot A)\oplus((\g\ot A)\ot_{A}(\g\ot A))\oplus\cdots.\]
Let $\eta:\g\to \g\ot A,u\mapsto u\ot 1$. Define a two-sided ideal $\cI$ in $T_{A}(\g\ot A)$ by
\begin{align*}
\cI:=\left\langle
\begin{array}{c}
\eta(u)\otimes_A\eta(v)
-\eta(v_{\alpha})\otimes_A\eta(u^{\alpha})
-\eta\bigl([u,v]\bigr), \\[2mm]
\eta(u)a-a_{\alpha}\eta(u^{\alpha})-\rho_{u}(a)
\end{array}
\ \middle|\
u,v\in\g,\ a\in A
\right\rangle .
\end{align*}

\begin{Lem}
Let $(A,\g,\rho)$ be a symmetric braided Lie-Rinehart algebra $(A,\g,\rho)$.  Then the universal enveloping algebra $\cU_{A}(\g):=T_{A}(\g\ot A)/\cI$ is a braided Hopf algebroid over $A$. More precisely, the source and target map are given by
\[s(a)=t(a)=a,\]
the coproduct and counit can be determined by
\[\Delta(u)=u\ot 1+1\ot u,\qquad \Delta(a)=a\ot 1=1\ot a\]
and
\[\varepsilon(u)=0\qquad \varepsilon(a)=a.\]
The (anti)-translation map is determined  by
\[u_{+}\ot u_{-}=u_{[+]}\ot u_{[-]}=u\ot 1-1\ot u.\]
\end{Lem}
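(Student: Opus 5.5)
The plan is to realise $\cU_{A}(\g)$ as a quotient of a $B$-Hopf algebroid by a Hopf ideal and then invoke Proposition~\ref{prop. quotient Hopf algebroids}. Set $B=A$; since $A$ is braided commutative, $\ol A=A$ as algebras. The tensor algebra $T:=T_{A}(\g\ot A)$ is an $A$-ring, and therefore an $A\ot\ol A$-ring through $\eta(a\ot \ol b)=ab$. I would build the bialgebroid structure on $T$ in three steps.

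\textbf{Step 1: structure on generators.} Define $\Delta$ and $\varepsilon$ on $A$ and on $\g$ by the formulas in the statement: $\Delta(u)=u\ot 1+1\ot u$, $\Delta(a)=a\ot 1$, $\varepsilon(u)=0$, $\varepsilon(a)=a$. One checks that $\Delta(u)\in T\times_{A}T$, i.e.\ the Takeuchi condition $(u\ot 1)\ol{b}=(u\ot 1)b$ holds modulo the balancing relation. This uses $s=t$ together with braided commutativity of $A$.

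\textbf{Step 2: extension to $T$.} Extend $\Delta$ to an algebra map $T\to T\times_{A}T$ by the universal property of the tensor algebra over $A$. Here $T\times_{A}T$ is an algebra by the lemma on $P\times_{B}P$ for $B^e$-rings. The key input is symmetry of $\B$: in the product $(p\ot q)(p'\ot q')=pp'_{\alpha}\ot q^{\alpha}q'$ the braiding then behaves like a flip, so $u\ot 1$ and $1\ot v$ braided-commute correctly. Coassociativity and counitality are then checked on generators, where they are immediate.

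\textbf{Step 3: compatibility with $\cI$.} Show that $\cI$ is a Hopf ideal in the sense of the definition. For the first family of generators, $X=u\ot v-v_\alpha\ot u^\alpha-[u,v]$, one computes
\[
\Delta(X)=X\ot 1+1\ot X+\bigl(u\ot v+v_\alpha\ot u^\alpha-v_\alpha\ot u^\alpha-(u^{\ol\beta}\ot v_{\ol\beta})\bigr).
\]
The cross terms cancel exactly when $\tau^{-1}=\tau_{\mathrm{flip}}$, i.e.\ in the symmetric case. For the second family, $Y=ua-a_\alpha u^\alpha-\rho_u(a)$, one gets similarly $\Delta(Y)=Y\ot 1+1\ot Y$, where the cross terms again cancel by symmetry and by $\Delta(\rho_u(a))=\rho_u(a)\ot 1$. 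Since $\Delta$ is an algebra map, $\Delta(\cI)\subseteq \cI\diamond T+T\diamond\cI$. The counit kills both generators, using $\varepsilon(ua)=\varepsilon(u\varepsilon(a))$-type identities; these come from defining $\varepsilon$ on $T$ via the anchor action of $T$ on $A$, with $u\cdot a=\rho_u(a)$. To see that this action factors through $\cI$, use conditions (ii)--(iv) of the Lie-Rinehart algebra.

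\textbf{Step 4: Hopf property.} Define the translation maps on generators by $u_+\ot u_-=u\ot 1-1\ot u$ and $a_+\ot a_-=a\ot 1$. Extend them to $T$ using the multiplicativity rule (\ref{transnabla}), and verify (\ref{bydef1}) on generators: $u\ot 1+1\ot u-1\ot u=u\ot 1$. This gives bijectivity of $\lambda$ on $\cU$. The anti-translation map is handled in the same way via (\ref{antitransnabla}), and conditions (iii) and (iv) for $\cI$ follow by the same cross-term cancellation as in Step 3.

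The main obstacle I expect is well-definedness: that the extended $\Delta$, $\varepsilon$ and translation maps factor through the balanced tensor products $\ot_A$ and $\diamond_A$ and through $\cI$. In particular, one must check that the second relation is compatible with the $\rho$-twisted right $A$-action on $\g\ot A$. I would first verify these checks carefully on generators, and then propagate them using Proposition~\ref{prop. quotient Hopf algebroids}.
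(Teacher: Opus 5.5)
\textbf{The plan breaks at Steps 1--2: with these formulas $T=T_A(\g\ot A)$ is not a bialgebroid, so Proposition~\ref{prop. quotient Hopf algebroids} has nothing to be applied to.}

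The Takeuchi condition for $\Delta(u)=u\ot 1+1\ot u$ fails in $T\diamond_A T$. For $b\in A$,
\[
\Delta(u)\,\ol b=ub\ot 1+\ol{b_{\alpha}}\ot u^{\alpha}=ub\ot 1+1\ot b_{\alpha}u^{\alpha},
\]
whereas
\[
\Delta(u)\,b=u\ot b+1\ot ub=b^{\ol\alpha}u_{\ol\alpha}\ot 1+1\ot ub .
\]
By symmetry the difference is
\[
(ub-b_{\alpha}u^{\alpha})\ot 1-1\ot(ub-b_{\alpha}u^{\alpha}).
\]
In $T$, the element $ub-b_{\alpha}u^{\alpha}=u\ot b-b_{\alpha}u^{\alpha}\ot 1$ is a nonzero element of tensor degree one. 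The balancing relations of $\diamond_A$ respect the grading of $T$, so this defect is nonzero in $(T_1\diamond_A T_0)\oplus(T_0\diamond_A T_1)$. Neither $s=t$ nor braided commutativity of $A$ removes it.

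It vanishes only after imposing the anchor relation $ub-b_{\alpha}u^{\alpha}=\rho_u(b)\in A$, i.e.\ in $\cU_A(\g)\diamond_A\cU_A(\g)$, where it becomes $\rho_u(b)\ot 1-1\ot\rho_u(b)=0$. This is exactly how the paper checks it. Hence no algebra map $T\to T\times_A T$ has the prescribed values. Another way to see this: if one existed, multiplicativity would give $\Delta(ua-a_{\alpha}u^{\alpha})=(ua-a_{\alpha}u^{\alpha})\ot 1$, and counitality would then force $ua-a_{\alpha}u^{\alpha}\in A$, which is false in $T$.

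The same defect appears in your Step 3. For $Y=ua-a_{\alpha}u^{\alpha}-\rho_u(a)$:
\begin{itemize}
\item with the representative $\Delta(a)=a\ot 1$, multiplying out gives $\Delta(Y)=Y\ot 1$, which is the paper's formula;
\item with $\Delta(a)=1\ot a$, it gives $1\ot Y$.
\end{itemize}
These two differ by precisely the Takeuchi defect above, and your $Y\ot 1+1\ot Y$ is neither.

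The repair is what the paper does: send the generators into $\cU_A(\g)\times_A\cU_A(\g)$ rather than $T\times_A T$. There the Takeuchi condition holds by the computation above. Also $\cU_A(\g)\times_A\cU_A(\g)$ is an algebra by the lemma on $P\times_B P$. So $u\mapsto u\ot 1+1\ot u$, $a\mapsto a\ot 1$ extends to an algebra map on $T$. One then checks it kills $\cI$ via $\Delta(X)=X\ot 1+1\ot X$ and $\Delta(Y)=Y\ot 1$, both of which vanish in the target. Your symmetry argument for the cross terms in $\Delta(X)$ is exactly what is needed here.

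The counit and the (anti-)translation maps likewise have to be defined directly on $\cU_A(\g)$. Define them multiplicatively, then check that they respect $\cI$ and invert $\lambda$ and $\mu$ on generators. Do not extend them from $T$ via (\ref{transnabla}), which presupposes that $T$ is already a Hopf algebroid.

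If you want to keep a quotient-by-Hopf-ideal argument, you would first have to divide $T$ by the anchor relations alone, where the Takeuchi condition does hold. You would then have to prove that this intermediate algebra is a Hopf algebroid, which is no shortcut.
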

\begin{proof}
First, we observe that the image of $\Delta$ belongs to Takeuchi product. Indeed, on the one hand we have
\begin{align*}
\Delta(u)(a\ot 1)=(u\ot 1+1\ot u)(a\ot 1)=ua\ot1 +a_{\alpha}\ot u^{\alpha}=ua\ot 1+1\ot a_{\alpha}u^{\alpha},
\end{align*}
on the other hand we have
\begin{align*}
\Delta(u)(1\ot a)=(u\ot 1+1\ot u)(1\ot a)=u\ot a+1\ot ua=a^{\ol \alpha}u_{\ol \alpha}+1\ot ua.
\end{align*}
The difference between them is 
\[(ua-a^{\ol \alpha}u_{\ol \alpha})\ot 1-1\ot (ua-a_{\alpha}u^{\alpha})\in \cU_{A}(\g)\diamond_{A}\cU_{A}(\g)\]
since $\B$ is symmetric, it is equal to 
\[\rho_{u}(a)\ot 1-1\ot \rho_{u}(a)=0.\] 
% by setting $\tilde{\Delta}(\eta(u))=\eta(u)\diamond_{A}1+1\diamond_{A}\eta(u)\in T_{A}(\g\ot A)\diamond_{A}T_{A}(\g\ot A)$ and $\tilde{\Delta}(a)=a\diamond_{A} 1\in T_{A}(\g\ot A)\diamond_{A}T_{A}(\g\ot A)$, 
As a result we can define $\Delta(uv)=\Delta(u)\Delta(v)=u\on v\on{}_{\alpha}\ot u\t{}^{\alpha}v\t$. In other words, the product in $\cU_{A}(\g)\diamond_{A}\cU_{A}(\g) $ can be generated by $\Delta(u)$ and $\Delta(a)$ in a braided way. In particular,
\[\Delta(uv)=\Delta(u)\Delta(v)=uv\ot 1+v_{\alpha}\ot u^{\alpha}+u\ot v+1\ot uv\in \cU_{A}(\g)\diamond_{A}\cU_{A}(\g).\]
Now, we can check $\Delta$ factors through $\cI$. Indeed, setting $R(u,v)=u v-v_{\alpha} u^{\alpha}-[u,v]$, we have
\begin{align*}
\Delta&(R(u,v))=\Delta(u v-v_{\alpha} u^{\alpha}-[u,v])\\
=&uv\ot 1+v_{\alpha}\ot u^{\alpha}+u\ot v+1\ot uv\\
&-(v_{\alpha}u^{\alpha}\ot 1+(u^{\alpha})_{\beta}\ot (v_{\alpha})^{\beta}+v_{\alpha}\ot u^{\alpha}+1\ot v_{\alpha}u^{\alpha})-([u,v]\ot1+1\ot [u,v])\\
=&(uv\ot 1-v_{\alpha}u^{\alpha}\ot 1-[u,v]\ot1)+(1\ot uv-1\ot v_{\alpha}u^{\alpha}-1\ot [u,v])\\
=&R(u,v)\ot 1+1\ot R(u,v),
\end{align*}
where we use the fact that $\B$ is symmetric in the second step.
Also, by using $\Delta(a)=a\ot 1$ and setting $R(u,a)=ua-a_{\alpha}u^{\alpha}-\rho_{u}(a)$, we have
\begin{align*}
\Delta&(R(u,a))=\Delta(ua-a_{\alpha}u^{\alpha}-\rho_{u}(a))\\
=&ua\ot 1+a_{\alpha}\ot u^{\alpha} -a_{\alpha}u^{\alpha}\ot 1-a_{\alpha}\ot u^{\alpha}-\rho_{u}(a)\ot 1\\
=&ua\ot 1-a_{\alpha}u^{\alpha}\ot 1-\rho_{u}(a)\ot 1\\
=&R(u,a)\ot 1.
\end{align*}
By the same method, setting $(uv)_{+}\ot (uv)_{-}=u_{+}v_{\alpha+}\ot v_{\alpha-}u^{\alpha}{}_{-}$ and $(uv)_{[+]}\ot (uv)_{[-]}=u_{[+]}v^{\ol \alpha[+]}\ot v^{\ol \alpha[-]}u_{\ol \alpha}{}_{[-]}=u_{[+]}v_{\alpha[+]}\ot v_{\alpha[-]}u^{\alpha}{}_{[-]}$, we have
\[R(u,v)_{+}\ot R(u,v)_{-}=R(u,v)_{[+]}\ot R(u,v)_{[-]}=R(u,v)\ot 1-1\ot R(u,v),\]
\[R(u,a)_{+}\ot R(u,a)_{-}=R(u,a)_{[+]}\ot R(u,a)_{[-]}=R(u,a)\ot 1.\]
Finally, we can check
\begin{align*}
u\on{}_{+}\ot u\on{}_{-}u\t=u\ot1-1\ot u+1\ot u=u\ot1.
\end{align*}
Also,
\begin{align*}
u{}_{+}\on\ot u{}_{+}\t u_{-}=u\ot1+1\ot u-1\ot u=u\ot1.
\end{align*}
Similarly, we can check the anti-translation map.
\end{proof}

\subsection{Braided Lie algebroids of braided Hopf algebroids}
Classically, given a Lie groupoid $\cG$ over its base manifold $M$, the sections of the Lie algebroid (as a vector bundle over $M$) is isomorphic to the right invariant vector fields tangent to the target fiber on the groupoid. The collection of such vector fields together with its Lie bracket and anchor forms a Lie-Rinehart algebra  \cite{Mac}. In this section, we are going to introduce a braided Lie-Rinehart algebra of a braided Hopf algebroid in ${}_{K}\cM$, where $(K, R)$ is a triangular Hopf algebra. We define the (braided) right invariant vector fields on a braided Hopf algebroid. Given a braided $B$-Hopf algebroid $\cL$ in ${}_{K}\cM$, we know $\Hom(\cL,\cL)$ is a left $K$-module algebra with
\[(k\bla u)(X)=k\on\la (u(S(k\t)\la X)).\]
We also observe that $k\la (u(X))=(k\on\bla u)(k\t\la X)$.
We define $\Hom^{\cL}(\cL,\cL)$ by
\[\{u\in \Hom(\cL,\cL)\,|\, u(\ol a X \ol b)=\ol{a_{\alpha}} u^{\alpha}(X)\ol b\quad\textup{and}\quad u(X)\on\ot u(X)\t=u(X\on)\ot X\t\}\]
where $W_{\alpha}\ot V^{\alpha}=R_{\alpha}\la W\ot R^{\alpha}\la V$ and $V_{\ol \alpha}\ot W^{\ol \alpha}=R_{\ol \alpha}\la V\ot R^{\ol \alpha}\la W$ for any element in the objects of the left $K$-module category. We can see $u(X\on)\ot X\t$ is well defined. Indeed, by using the fact that the braided category ${}_{K}\cM$ is symmetric, we have
\begin{align*}
u(\ol{b}X\on)\ot X\t=\ol{b_{\alpha}}u^{\alpha}(X\on)\ot X\t=u(X\on{}_{\alpha})\ot b^{\alpha} X\t.
\end{align*}
We define the braided right invariant vector fields $\cX$ on $\CL$ by
\[\cX:=\{u\in \Hom^{\cL}(\cL,\cL)\,|\, u(XY)=u(X)Y+X_{\alpha}u^{\alpha}(Y)\}\]
We also denote it by $\cX(\cL)$ in case we want to indicate it is the right invariant vector fields on $\cL$. We can see that $u|_{\BB}=0$ by observing the fact that
$b_{\alpha}\,u^{\alpha}(X)=u(\ol{b}X)=u(\ol b)X+b_{\alpha}\,u^{\alpha}(X).$
It is not hard to see, an element $u\in \Hom(\cL,\cL)$ is a braided right invariant vector field if and only if it satisfies
\begin{itemize}
\item[(i)]  $u|_{\BB}=0$;
\item[(ii)]  $u(XY)=u(X)Y+X_{\alpha}u^{\alpha}(Y)$;
\item[(iii)]  $u(X)\on\ot u(X)\t=u(X\on)\ot X\t$.
\end{itemize}
\begin{Lem}
Let $(K,R)$ be a triangular Hopf algebra. If $\cL$ is a braided $B$-Hopf algebroid in ${}_{K}\cM$ then $\cX$ is a symmetric braided Lie algebra in ${}_{K}\cM$ with the Lie bracket $[u, v]=uv-v_{\alpha}u^{\alpha}$.
\end{Lem}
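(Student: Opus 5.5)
We will show four things in turn: that $\cX$ is a $K$-submodule of $\Hom(\cL,\cL)$, so that it is an object of ${}_{K}\cM$; that the bracket $[u,v]=uv-v_{\alpha}u^{\alpha}$ maps $\cX\ot\cX$ back into $\cX$; that it is braided antisymmetric; and that it satisfies the braided Jacobi identity. Throughout, $uv$ means composition in the $K$-module algebra $\Hom(\cL,\cL)$, and $v_\alpha\ot u^\alpha=R_\alpha\bla v\ot R^\alpha\bla u$. Triangularity gives $R^{-1}=R_{21}$, so $\tau^{-1}_{VW}=\tau_{WV}$ and $\alpha$-indices can be freely "un-crossed".

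\emph{Step 1 ($K$-stability).} Take $u\in\cX$ and $k\in K$, and check conditions (i)--(iii) for $k\bla u$ using $(k\bla u)(X)=k\on\la u(S(k\t)\la X)$.
For (i), use that $\BB$ is a $K$-submodule, since $t$ is a morphism in ${}_K\cM$.
For (ii), expand $k\on\la\big(u(S(k\t)\la X\,Y)\big)$. Then use the module-algebra property, $u$'s derivation rule, and the quasi-triangular identity $\Delta^{\op}(k)R=R\Delta(k)$ to move $k$ past the braiding. This gives $(k\bla u)(X)Y+X_\alpha (k\bla u)^\alpha(Y)$.
For (iii), use that $\Delta$ is $K$-linear, $\Delta(k\la X)=k\on\la X\on\ot k\t\la X\t$, and cancel $k\t S(k\th)$-type terms.

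\emph{Step 2 (closure of the bracket).}
For (i): $[u,v]$ vanishes on $\BB$ because $u$ and $v$ do, and $\BB$ is stable under $R_\alpha\bla$.
For (iii): right invariance is stable under composition, since $uv(X)\on\ot uv(X)\t=u(v(X\on))\ot X\t$. This also holds for $v_\alpha u^\alpha$ because $k\bla$ preserves right invariance by Step 1.
For (ii): this is the main computation. Write
\[uv(XY)=u\big(v(X)Y+X_\alpha v^\alpha(Y)\big)=uv(X)Y+v(X)_\beta u^\beta(Y)+u(X_\alpha)v^\alpha(Y)+X_{\alpha\beta}u^{\beta}v^{\alpha}(Y),\]
with the braiding indices placed appropriately. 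Expand $v_\alpha u^\alpha(XY)$ in the same way. The two cross terms cancel against each other only after using the symmetry $\tau^2=\id$, together with the identity $k\la(u(X))=(k\on\bla u)(k\t\la X)$, which lets the braiding pass through evaluation. The remaining terms give $[u,v](X)Y+X_\alpha[u,v]^\alpha(Y)$.

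\emph{Step 3 (Lie algebra axioms).} These hold for any algebra $A$ in a symmetric category with the bracket $[a,b]=ab-b_\alpha a^\alpha$. Antisymmetry is
\[-[v_\alpha,u^\alpha]=-(v_\alpha u^\alpha-u^{\alpha}{}_{\beta}v_\alpha{}^\beta)=[u,v],\]
using $\tau^2=\id$. The Jacobi identity follows by expanding all terms into cubic monomials, using associativity and naturality of $\tau$ with respect to $\nabla$ (the braided Yang--Baxter/hexagon relations). We apply this to $A=\Hom(\cL,\cL)$, which is a $K$-module algebra and hence an algebra in ${}_K\cM$.

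The main obstacle is the derivation property in Step 2. It requires careful bookkeeping of the $R$-matrix legs in four terms, and it is the only point where triangularity is genuinely needed rather than mere quasi-triangularity. The plan is to carry this step out graphically: draw $uv$ and $v_\alpha u^\alpha$ applied to $\nabla$ as diagrams in which the evaluation maps are boxes, and use the fact that, in a symmetric category, over- and under-crossings coincide.
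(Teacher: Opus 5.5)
Your proposal is correct and follows essentially the same route as the paper. Both check $K$-stability of $\cX$ under the adjoint action, using $\Delta^{\op}(k)R=R\Delta(k)$. Both then prove closure of the bracket: vanishing on $\BB$, colinearity under composition, and the braided Leibniz rule, where triangularity is needed to match one pair of cross terms. Finally, both treat antisymmetry and Jacobi as the general fact about the braided commutator in a symmetric category; the paper simply cites \cite[Lemma 5.2]{ALP24} for this.

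One small slip is in your closing remark. The Leibniz step is not the only place where triangularity is genuinely needed, since your own Step 3 uses $\tau^{2}=\mathrm{id}$ for antisymmetry and the Jacobi identity.
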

\begin{proof}
First, we show $\cX$ is a left $K$-module. Note that the action given above is well defined. Indeed, given $u\in \cX$, we have $k\bla u(\ol b)=k\on\la (u(\ol{S(k\t)\la b}))=0$. Also, we have
\begin{align*}
(k\bla &u)(XY)\\
=&k\on \la (u((S(k\th)\la X)\,( S(k\t)\la Y)))\\
=&k\on \la (u(S(k\th)\la X)( S(k\t)\la Y))+k\on \la ((R_{\alpha}S(k\th)\la X)(R^{\alpha}\bla u)( S(k\t)\la Y))\\
=&k\on \la (u(S(k\t)\la X))Y+ (k\on R_{\alpha}S(k\th)\la X)(k\t R^{\alpha}\on\la(u(S(R^{\alpha}\t) S(k\th)\la Y)))\\
=&k\on \la (u(S(k\t)\la X))Y+ ( R_{\alpha}k\th S(k\fo)\la X)( R^{\alpha}\on k\on\la(u(S(k\t) S(R^{\alpha}\t) \la Y)))\\
=&k\on \la (u(S(k\t)\la X))Y+ ( R_{\alpha}\la X)( R^{\alpha}\on k\on\la(u(S(k\t) S(R^{\alpha}\t) \la Y)))\\
=&k\bla u(X)\, Y+(R_{\alpha}\la X)(R^{\alpha}k\bla u)(Y).
\end{align*}
Also, we have
\begin{align*}
(k\bla u)(X)\on\ot (k\bla u)(X)\t=&k\on\la u(S(k\th)\la X)\on\ot k\t\la u(S(k\th)\la X)\t\\
=&k\on\la u(S(k\fo)\la X\on)\ot k\t S(k\th)\la X\t\\
=&k\on\la u(S(k\t)\la X\on)\ot X\t\\
=&k\bla u(X\on)\ot X\t.
\end{align*}

Second, we show the Lie bracket is well defined. Indeed, the composition of two vector fields is also right $\cL$-colinear. Moreover, we have
\begin{align*}
(k\on\bla u)(k\t\bla v)(X)=&(k\on\bla u)(k\t\la v(S(k\th)\la X))\\
=&k\on\la u(S(k\t) k\th \la v(S(k\fo)\la X))\\
=&k\on\la u(v(S(k\t)\la X))\\
=&k\bla (uv)(X).
\end{align*}
As a result, $k\bla [u,v]=[k\on\bla u, k\t\bla v]$. 
Indeed,
\begin{align*}
k\bla [u,v]=&k\bla (uv-v_{\alpha}u^{\alpha})=(k\on\bla u)(k\t\bla v)-(k\on\bla v_{\alpha})(k\t\bla u^{\alpha})\\
=&(k\on\bla u)(k\t\bla v)-(k\on R_{\alpha}\bla v)(k\t R^{\alpha}\bla u)\\
=&(k\on\bla u)(k\t\bla v)-( R_{\alpha}k\t\bla v)(R^{\alpha}k\on\bla u)\\
=&[k\on\bla u, k\t\bla v]. 
\end{align*}
It is not hard to see $[u,v]$ is right $\cL$-colinear and vanishes on the target subalgebra. We only check that it satisfies the braided Leibniz rule.
\begin{align*}
[u,v](XY)=&u(v(XY))-v_{\alpha}(u^{\alpha}(XY))\\
=&u(v(X)Y)+u(X_{\alpha}v^{\alpha}(Y))-v_{\alpha}(u^{\alpha}(X)Y)-v_{\alpha}(X_\beta\,u^{\alpha\beta}(Y))\\
=&u(v(X))Y+v(X)_{\alpha}u^{\alpha}(Y)+u(X_\alpha)v^{\alpha}(Y)+X_{\alpha\beta}u^{\beta}(v^{\alpha}(Y))\\
-v_{\alpha}(&u^{\alpha}(X))Y-u^{\alpha}(X)_{\beta}v_{\alpha}{}^{\beta}(Y)-v_{\alpha}(X_{
\beta
})u^{\alpha\beta}(Y)-X_{\beta\gamma
}v_{\alpha}{}^{\gamma}(u^{\alpha\beta}(Y)))\\
=&u(v(X))Y+X_{\alpha\beta}u^{\beta}(v^{\alpha}(Y))-v_{\alpha}(u^{\alpha}(X))Y-X_{\beta\gamma
}v_{\alpha}{}^{\gamma}(u^{\alpha\beta}(Y)))\\
=&[u,v](X)Y+X_{\alpha}[u,v]^{\alpha}(Y)
\end{align*}
where the 3rd step uses the triangularity of $(K,R)$.
It is straightforward to show that $\cX$ satisfies the braided anti-symmetric property and the braided Jacobi identity (the same with proof of \cite[Lemma 5.2]{ALP24}), where we have used the triangularity of $(K,R)$.

\end{proof}
\begin{Thm}Given a triangular Hopf algebra $(K,R)$. If $\cL$ is a braided $B$-Hopf algebroid in ${}_{K}\cM$ and $B,\BB$ belong to the braided center of $\cL$, then $(B, \cX, \rho)$ is a symmetric braided Lie-Rinehart algebra with the anchor given by $\rho_{u}(a)=u(a)$, the left $B$-module structure is given by $(bu)(X)=bu(X)$ for every $a,b\in B$ and the Lie bracket is given by the braided commutator as above.
\end{Thm}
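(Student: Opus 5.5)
We verify the axioms (i)–(v) of a braided Lie-Rinehart algebra in turn, using the preceding Lemma for the braided Lie algebra structure of $\cX$ and the braided Leibniz rule and right $\cL$-colinearity of vector fields.

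\emph{Preliminary facts about $B$ inside $\cL$.} Since $B$ and $\BB$ lie in the braided center of $\cL$ and ${}_K\cM$ is symmetric, $s(B)$ and $t(\BB)$ braided commute with everything. In particular $B$ is braided commutative, which the definition requires, and the source and target images commute with each other.

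\emph{The anchor lands in $B$.} For $a\in B$ we use the colinearity condition $u(a)\on\ot u(a)\t=u(a\on)\ot a\t$ together with $\Delta(a)=a\ot 1$. This gives $\Delta(u(a))=u(a)\ot 1$. Applying $\varepsilon\diamond_B\id$ yields $u(a)=\varepsilon(u(a))\la 1$, which lies in $s(B)$. So $\rho_u(a):=u(a)$ is a well-defined map $\cX\ot B\to B$ in ${}_K\cM$, with $K$-linearity inherited from the action $\bla$.

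\emph{$\cX$ is a left $B$-module.} We must check that $bu$, defined by $(bu)(X)=bu(X)$, lies in $\cX$.
\begin{itemize}
\item[(a)] Vanishing on $\BB$ is immediate.
\item[(b)] For the Leibniz rule, $b(u(X)Y+X_\alpha u^\alpha(Y))=(bu)(X)Y+X_{\alpha}(bu)^{\alpha}(Y)$. This uses that $b$ braided commutes past $X$ (braided center) and that the braiding of $bu$ with $X$ is the composite braiding.
\item[(c)] For colinearity, $\Delta(bu(X))=b\,u(X)\on\ot u(X)\t$ by left $B$-linearity of $\Delta$, which equals $(bu)(X\on)\ot X\t$.
\end{itemize}
The module axioms and $K$-linearity of the action are routine.

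\emph{Axioms (ii) and (iv).} Axiom (ii) is the Leibniz rule restricted to $B$. Axiom (iv) holds by definition.

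\emph{Axiom (iii).} It follows directly from $[u,v]=uv-v_\alpha u^\alpha$ evaluated on $a\in B$. Here we need that $u(a)\in B$, so that $\rho_u\circ\rho_v$ makes sense.

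\emph{Axiom (v).} We compute
\[[u,av](X)=u(av(X))-(av)_\alpha(u^\alpha(X)).\]
By Leibniz, $u(av(X))=u(a)v(X)+a_\alpha u^\alpha(v(X))$. The second term is $a_{\alpha}v^{\alpha}(u(X))$ after rewriting the braiding with the composite $av$, using that the braiding is natural and symmetric. Combining gives $\rho_u(a)v+a_\alpha[u^\alpha,v]$.

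\emph{Main obstacle.} The main difficulty is the careful bookkeeping of braidings in (v) and in the Leibniz check for $bu$. In particular we must use the triangularity $R_{21}R=1$ to identify $(av)_\alpha\ot u^\alpha$ with $a_\alpha v_\beta\ot u^{\alpha\beta}$ reordered appropriately. I would do these checks in $R$-matrix notation, as in the proof of the previous Lemma.
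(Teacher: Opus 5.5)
Your proposal follows essentially the paper's route: you check vanishing on $\BB$, the braided Leibniz rule and colinearity for $bu$ using that $B$ lies in the braided center, read off (ii)--(iv) on $B$, and expand $[u,av](X)$. Your colinearity--counit argument giving $u(a)=\varepsilon(u(a))\cdot 1\in B$ is a correct addition, and it justifies a point the paper leaves implicit.

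One slip in (v): the subtracted term is $(av)_\alpha(u^\alpha(X))=a_\alpha v_\beta(u^{\alpha\beta}(X))$, not $a_\alpha v^\alpha(u(X))$. This identity follows from naturality of the braiding for the $K$-linear action $B\ot\cX\to\cX$ together with the hexagon axiom, i.e.\ $(\id\ot\Delta)R=R_{13}R_{12}$, so triangularity is not needed there. Your last paragraph already states the correct form, so just make the computation in (v) agree with it.
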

\begin{proof}
First we can see that the left $B$-action is $K$-linear. Indeed,
\[(k\on\la a)(k\t\bla u)(X)=(k\on\la a)( k\t\la u(S(k\th)\la X))=k\on\la (au(S(k\t)\la X))=k\bla (au)(X).\]
Also, we have
\begin{align*}
au(XY)=&au(X)Y+aX_{\alpha}u^{\alpha}(Y)=au(X)Y+X_{\alpha\beta}a^{\beta}u^{\alpha}(Y)\\
=&au(X)Y+X_{\alpha}(au)^{\alpha}(Y),
\end{align*}
where the 2nd step use the fact that $B$ belongs to the braided center. We can also see $au|_{\BB}=0$ and it is clearly right $\cL$-colinear. It is not hard to see
\[\rho_{au}=a\rho_{u},\quad\rho_{[u,v]}=\rho_{u}\rho_{v}-\rho_{v_{\alpha}}\rho_{u^{\alpha}},\quad\rho_{u}(ab)=\rho_{u}(a)b+a_{\alpha}\rho_{u^{\alpha}}(b).\]
Finally, we have
\begin{align*}
[u,av](X)=&u(av(X))-(av)_{\alpha}(u^{\alpha}(X))\\
=&u(a)v(X)+a_{\alpha}u^{\alpha}(v(X))-a_{\alpha}v_{\beta}u^{\alpha\beta}(X).
\end{align*}
As a result, we have $[u,av]=a_{\alpha}[u^{\alpha},v]+\rho_{u}(a)v$.
\end{proof}

\begin{Rem}
 Let $\cL$ be a $B$-Hopf algebroid in ${}_{K}\cM$. 
 Then 
    \begin{align*}
        \Hom_{\BB-}(\cL,B)&\cong \Hom^{\cL}(\cL, \cL)\\
        f&\mapsto (f\diamond_{B} \cL)\circ\Delta=:F_{f}\\
       f_{F}:=\varepsilon\circ F&\mapsfrom F.
    \end{align*}
    Given $F\in \Hom^{\cL}(\cL, \cL)$, we observe that
    \[f_{k\bla F}(X)=\varepsilon(k\on\la F(S(k\t)\la X))=k\on\la\varepsilon( F(S(k\t)\la X))=k\bla f_{F}(X).\]
    It is not hard to see $f_F:\cL\to B$ is braided left $\BB$-linear, i.e. $f_F(\ol b X)=f_{F}(X_{\alpha})b^{\alpha}$. Also, 
    \[F_{f}(\ol b X)=f(X\on{}_{\alpha})\ol{b^{\alpha}}X\t=\ol{b_{\alpha}}f^{\alpha}(X\on)X\t=\ol{b_{\alpha}}F_{f^{\alpha}}(X)=\ol{b_{\alpha}}F_{f}^{\alpha}(X).\]
   
    Moreover, we can check that 
    \[F_{f_{F}}(X)=f_{F}(X\on)X\t=\varepsilon(F(X\on))X\t=\varepsilon(F(X)\on)F(X)\t=F(X).\]
   Also,
   \[f_{F_{f}}(X)=\varepsilon(F_{f}(X))=\varepsilon(f(X\on)X\t)=f(X\on)\varepsilon(X\t)=f(X).\]
  Given $u\in \cX$, we define $\Tilde{u}:=\varepsilon\circ u\in \Hom_{\BB-}(\cL,B)$.   The geometric meaning of $\Tilde{u}$ is the restriction of the right invariant vector field $u$ on the unit submanifold $M$, which is a section of the Lie algebroid vector bundle over the base manifold.
\end{Rem}

With the same assumption as above, we can also define 
\[\cX_{\textup{ver}}:=\{u\in \cX\,|\, u(b)=0,\forall b\in B\}\]
We have
\begin{Lem}
Given a triangular Hopf algebra $(K,R)$. If $\cL$ is a braided $B$-Hopf algebroid in ${}_{K}\cM$ and $B,\BB$ belong to the braided center of $\cL$ then $(B, \cX_{\textup{ver}}, \rho)$ is a symmetric braided $B$-Lie algebra as a sub-Lie-Rinehart algebra of $(B, \cX, \rho)$.
\end{Lem}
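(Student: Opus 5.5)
We argue inside the Lie--Rinehart algebra $(B,\cX,\rho)$ of the preceding Theorem and show that $\cX_{\textup{ver}}$ is a sub-object stable under every structure map. Since $\rho_u(a)=u(a)$, the defining condition $u(b)=0$ for all $b\in B$ says exactly that $\cX_{\textup{ver}}=\ker\bigl(u\mapsto\rho_u\bigr)$. Hence, once $\cX_{\textup{ver}}$ is known to be a sub-Lie-Rinehart algebra, its anchor is trivial automatically. So it is a braided $B$-Lie algebra, and symmetric because ${}_{K}\cM$ is symmetric for triangular $(K,R)$.

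We check three closure properties in order.

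First, $\cX_{\textup{ver}}$ is a $K$-submodule. For $u\in\cX_{\textup{ver}}$, $k\in K$ and $b\in B$ we have
\[(k\bla u)(b)=k\on\la\bigl(u(S(k\t)\la b)\bigr)=0,\]
because $B$ is a $K$-submodule of $\cL$ (it is the image of $s$, which is a morphism in ${}_{K}\cM$).

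Second, $\cX_{\textup{ver}}$ is a left $B$-submodule. For $a,b\in B$ we get $(au)(b)=a\,u(b)=0$.

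Third, $\cX_{\textup{ver}}$ is closed under the bracket. For $u,v\in\cX_{\textup{ver}}$ and $b\in B$,
\[[u,v](b)=u(v(b))-v_{\alpha}\bigl(u^{\alpha}(b)\bigr)=0 .\]
Here $v(b)=0$, and $u^{\alpha}=R^{\alpha}\bla u\in\cX_{\textup{ver}}$ by the first step, so $u^{\alpha}(b)=0$ as well.

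The remaining structure is inherited. Braided antisymmetry, the Jacobi identity and axioms (i)--(v) of a braided Lie--Rinehart algebra hold in $\cX$ by the previous Theorem, so they hold on the subobject. With $\rho=0$ on $\cX_{\textup{ver}}$, axiom (v) reduces to $[u,av]=a_{\alpha}[u^{\alpha},v]$, which is consistent with the bracket in $\cX$ since $\rho_u(a)=u(a)=0$.

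The only point needing care is the first step, namely the $K$-stability of $\cX_{\textup{ver}}$. It is needed both for $\cX_{\textup{ver}}$ to be an object of ${}_{K}\cM$ and to control the braided legs $u^{\alpha}$ and $v_{\alpha}$ in the bracket and module axioms. It reduces to $K$-linearity of the source map $s$, which is part of $\cL$ being a $B^e$-ring in ${}_{K}\cM$. Everything else is routine.
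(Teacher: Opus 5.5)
Your proposal is correct and follows the same route as the paper's proof. Both check $K$-stability via $(k\bla u)(b)=k\on\la\bigl(u(S(k\t)\la b)\bigr)=0$, closure under the bracket, $B$-stability, and triviality of the anchor. Your observation that $u^{\alpha}=R^{\alpha}\bla u$ stays in $\cX_{\textup{ver}}$, so that $v_{\alpha}(u^{\alpha}(b))=0$, spells out the step the paper passes over with ``clearly''.
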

\begin{proof}
First, $\cX_{\textup{ver}}$ is a $K$-module. Indeed,
\[k\bla u(b)=k\on\la(u(S(k\t)\la b))=0.\]
Clearly, the Lie bracket of two elements in $\cX_{\textup{ver}}$ also vanishes on $B$. Moreover, $\cX_{\textup{ver}}$ is a left $B$-module and the anchor $\rho$ is trivial.
\end{proof}
There is a short exact sequence of Lie-Rinehart algebras:

 \[0\xrightarrow{}\cX_{\textup{ver}}\xrightarrow{}\cX\xrightarrow{\rho}\textup{Der}(B)\xrightarrow{}0,
           \]
where $\textup{Der}(B):=\{u:B\to B| u(ab)=u(a)b+a_{\alpha}u^{\alpha}(b)\}$ is the braided derivative of $B$ which is clearly a Lie-Rinehart algebra by the same reason as above.

\subsubsection{Braided Lie algebroids of  braided ES Hopf algebroids}
In this subsection, we will study the braided Lie-Rinehart algebra of ES-Hopf algebroid. 
Given a faithfully flat $H$-Galois extension $B\subset P$ in ${}_{K}\cM$ as in Lemma \ref{lem. ES of a quasi-triangular Hopf algebra}. It is similar to \cite{ALP24,ALP23, ALP25}, we define
\[\textup{Der}^{H}(P):=\{f\in \textup{Der}(P)\,|\, f(p)\z\ot f(p)\on=f(p\on)\ot p\z\}.\]
\begin{Thm}
Let $(K,R)$ be a triangular Hopf algebra and $B\subset P$ be a faithfully flat $H$-Galois extension in ${}_{K}\cM$ such that $B$ belongs to the braided center of $P$. Then $\textup{Der}^{H}(P)$ is a symmetric braided Lie-Rinehart algebra over $B$ which is isomorphic to $\cX(L(P, H))$. More precisely, the structure is given by
\[k\bla f(p)=k\on\la (f(S(k\t)\la p));\]
\[[f,g]=f\circ g-g_{\alpha}\circ f^{\alpha};\]
\[(bf)(p)=bf(p),\quad \rho_{f}(b)=f(b).\]
\end{Thm}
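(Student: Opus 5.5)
We construct mutually inverse maps $\Phi:\textup{Der}^{H}(P)\to\cX(L(P,H))$ and $\Psi:\cX(L(P,H))\to\textup{Der}^{H}(P)$, show that both are $K$-linear and $B$-linear and that they preserve bracket and anchor, and then transport the Lie–Rinehart structure of $\cX(L(P,H))$ (given by the Theorem on $(B,\cX,\rho)$) to $\textup{Der}^{H}(P)$. In parallel we check directly that the stated formulas define a braided Lie–Rinehart structure; this mirrors the proof of the Lemma that $\cX$ is a symmetric braided Lie algebra and uses triangularity of $(K,R)$. Before using that Theorem we must confirm its hypothesis that $B$ and $\BB$ lie in the braided center of $L(P,H)$. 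This follows from the product formula of Lemma~\ref{lem. ES of a quasi-triangular Hopf algebra}: $s(b)=b\ot 1$ and $t(b)=1\ot b$, and the assumption that $B$ is braided central in $P$, together with symmetry, lets $b$ pass through both legs.

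For $f\in\textup{Der}^{H}(P)$ set
\[
\Phi(f)(p\ot q):=f(p)\ot q .
\]
The equivariance condition in the definition of $\textup{Der}^H(P)$ is exactly what makes $f\ot\id$ commute with the codiagonal coaction, so $\Phi(f)$ preserves $L(P,H)=(P\ot P)^{co H}$. The remaining properties are checked as follows.
\begin{itemize}
\item Since the target is $1\ot q$, we get $\Phi(f)|_{\BB}=0$.
\item The braided Leibniz rule for $\Phi(f)$ follows from the multiplication formula for $L(P,H)$, because $f$ only touches the first leg.
\item Right $\cL$-colinearity follows from $\Delta(p\ot q)=(p\z\ot p\on\tuno{})\ot(p\on\tdue{}\ot q)$ and $H$-equivariance of $f$: we have $f(p)\z\ot f(p)\on=f(p\z)\ot p\on$, hence $\Delta(\Phi(f)(X))=\Phi(f)(X\on)\ot X\t$.
\end{itemize}

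Conversely, given $u\in\cX(L(P,H))$, we use the left $L(P,H)$-comodule algebra structure $\delta(p)=p\z\ot p\on\tuno{}\ot p\on\tdue{}$ from the Remark after the bialgebroid Lemma and define
\[
\Psi(u)(p):=(\id\ot\varepsilon')\bigl(u(p\z\ot p\on\tuno{})\bigr)\,p\on\tdue{},
\]
that is, we apply $u$ to the first $L$-leg and multiply the two legs back together. This is the $f_F$ / $F_f$ correspondence of the Remark on $\Hom_{\BB-}(\cL,B)\cong\Hom^{\cL}(\cL,\cL)$, now lifted to $P$.

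\begin{itemize}
\item Well-definedness over $\ot_B$ uses that $u$ is $\BB$-linear in the braided sense and kills $\BB$.
\item Derivation property: $\delta$ is an algebra map and $u$ is a braided derivation.
\item $H$-equivariance: by (\ref{delre}).
\item $\Psi\Phi=\id$: follows from $p\z\tuno{p\on}\tdue{p\on}=p$ in (\ref{cantau}).
\item $\Phi\Psi=\id$: we reduce to showing that $u$ is determined by $u(p\ot 1)$-type data. Colinearity gives $u(X)=u(X\on)X\t$, and then (\ref{gammaeta}) rewrites $X=p\ot q$ through $\tau$.
\end{itemize}

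We then check that $\Phi$ respects the $K$-action $k\bla f$, the $B$-module structure, the bracket (it is a composition homomorphism, and the braidings match) and the anchor, since $\Phi(f)(b\ot1)=f(b)\ot 1$.

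I expect the main obstacle to be $\Phi\Psi=\id$, and more precisely proving that an arbitrary $u\in\cX$ acts only on the first tensor leg. I would first apply $u$ to the identity $p\ot q=(p\z\ot p\on\tuno{})(1\ot \ldots)$ coming from $\Delta$ and the counit, and use $u|_{\BB}=0$ and the Leibniz rule. If this does not close up, I would fall back on faithful flatness to identify $L\ot_{\BB}P$ with $P\ot P$ restricted to coinvariants, and compare both sides after applying the canonical map.
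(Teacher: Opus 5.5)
Your overall architecture is the paper's. You take $\Phi(f)=u_f$ with $u_f(p\ot q)=f(p)\ot q$, and $\Psi(u)=f_u$ with $f_u(p)=\varepsilon\bigl(u(p\z\ot p\on\tuno{})\bigr)\,p\on\tdue{}$, and then transport the structure. In $\Psi$, use the counit $\varepsilon(p\ot q)=pq$ of $L(P,H)$ rather than the undefined $(\id\ot\varepsilon')$.

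The genuine gap is the step you yourself single out, $\Phi\Psi=\id$, which the proposal does not prove. The identity you write, $u(X)=u(X\on)X\t$, is not correct as stated, because the counit is missing. Your fallback plans are neither carried out nor needed. One plan decomposes $p\ot q$ and uses the Leibniz rule with $u|_{\BB}=0$; the other compares after the canonical map using faithful flatness. The Leibniz route at best reduces the problem to evaluating $u$ on elements $p\z\ot p\on\tuno{}$, whose second leg is still non-trivial, so it does not close on its own.

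The missing observation is that ``$u$ only acts on the first leg'' follows immediately from right $\cL$-colinearity plus counitality. This is exactly the identity $F_{f_F}=F$ from the Remark on $\Hom_{\BB-}(\cL,B)\cong\Hom^{\cL}(\cL,\cL)$. Applying $\varepsilon\diamond_B\id$ to $u(X)\on\ot u(X)\t=u(X\on)\ot X\t$ gives
\[u(X)=\varepsilon\bigl(u(X)\on\bigr)\,u(X)\t=\varepsilon\bigl(u(X\on)\bigr)\,X\t .\]
For $X=p\ot q$ one has $X\on\ot X\t=(p\z\ot p\on\tuno{})\ot(p\on\tdue{}\ot q)$. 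Also, $B$ acts on the left of $L(P,H)$ through $s(b)=b\ot 1$, that is, on the first leg. Hence
\[u(p\ot q)=\varepsilon\bigl(u(p\z\ot p\on\tuno{})\bigr)\,p\on\tdue{}\ot q=\Psi(u)(p)\ot q=\Phi(\Psi(u))(p\ot q),\]
which is precisely the paper's one-line argument.

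Two smaller points:
\begin{enumerate}
\item For $\Psi\Phi=\id$ you need $\tuno{h}\tdue{h}=\varepsilon(h)1$, the second equation of (\ref{gammaeta}), not (\ref{cantau}), because $f$ sits on $p\z$. Then $\Psi(\Phi(f))(p)=f(p\z)\,p\on\tuno{}p\on\tdue{}=f(p)$.
\item The braided Leibniz rule for $\Phi(f)$ is not just ``$f$ touches only the first leg''. In $X_\alpha u_f^\alpha(Y)$ the map $u_f$ is braided past both legs of $X=p\ot q$, whereas the derivation rule for $f$ braids $f$ past $p$ only. Reconciling the two uses triangularity together with $k\bla u_f=u_{k\bla f}$, as in the paper.
\end{enumerate}

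Your explicit check that $s(B)$ and $t(\BB)$ are braided central in $L(P,H)$ is a useful point the paper leaves implicit. It is needed to invoke the Theorem on $(B,\cX,\rho)$.
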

\begin{proof}
It is sufficient to show there is a one to one correspondence between $\cX(L(P, H))$ and $\textup{Der}^{H}(P)$ which preserve all the structure above. 

Given $f\in \textup{Der}^{H}(P)$, we can construct $u_{f}(p\ot q)=f(p)\ot q$ for every $p\ot q\in L(P, H)$. As $f$ is right $H$-colinear, we can see $f(p)\ot q\in L(P,H)$. We also have
\begin{align*}
u_{f}((p\ot q)(p'\ot q'))=&u_{f}(pp'_{\alpha}\ot q'_{\beta}q^{\alpha\beta})=f(pp'_{\alpha})\ot q'_{\beta}q^{\alpha\beta}\\
=&f(p)p'_{\alpha}\ot q'_{\beta}q^{\alpha\beta}+p_{\gamma}f^{\gamma}(p'_{\alpha})\ot q'_{\beta}q^{\alpha\beta}\\
=&u_{f}(p\ot q)(p'\ot q')+(p\ot q)_{\gamma}u_{f^{\gamma}}(p'\ot q')\\
=&u_{f}(p\ot q)(p'\ot q')+(p\ot q)_{\gamma}(u_{f})^{\gamma}(p'\ot q')
\end{align*}
where we use the  triangularity of $(K,R)$ in the 3rd step and the fact that
\begin{align*}
k\bla u_{f}(p\ot q)=&k\on \la (f(S(k\t)\la (p\ot q)))=k\on\la (f(S(k\th)\la p)\ot S(k\t)\la q)\\
=&k\on\la f(S(k\t)\la p)\ot  q=u_{k\bla f}(p\ot q)
\end{align*}
in the 4th step. Also, $u_{f}(\ol b)=f(1)\ot b=0$. Moreover,
\begin{align*}
u_{f}(p\ot q)\on\ot u_{f}(p\ot q)\t
=&f(p\z)\ot p\on\tuno{}\ot p\on\tdue{}\ot q\\
=&u_{f}((p\ot q)\on)\ot (p\ot q)\t.
\end{align*}
Hence $u_{f}$ is a well defined right invariant vector field.

Given $u\in \cX$, we can construct $f_{u}\in\textup{Der}^{H}(P)$ by $f_{u}(p)=\varepsilon\circ u(p\mo)p\z=\varepsilon\circ u(p\z\ot p\on\tuno{})p\on\tdue{}$. As $u$ is $\BB$-linear, this is well defined by using $p\z\ot p\on\tuno{}\ot p\on\tdue{}\in L(P,H) \diamond_{B}P$. It is right $H$-colinear by using (\ref{delre}). Now, we check it is a braided derivative. 
\begin{align*}
f_{u}(p q)=&\varepsilon\circ ((pq)\mo)(pq)\z\\
=&\varepsilon\circ u(p\mo q\mo{}_{\alpha})p\z{}^{\alpha}q\z\\
=&\varepsilon(u(p\mo) q\mo{}_{\alpha})p\z{}^{\alpha}q\z+\varepsilon(p\mo{}_{\beta} u^{\beta}(q\mo{}_{\alpha}))p\z{}^{\alpha}q\z\\
=&\varepsilon(u(p\mo) \ol{\varepsilon(q\mo{}_{\alpha})})p\z{}^{\alpha}q\z+\varepsilon(p\mo{}_{\beta} \ol{\varepsilon (u^{\beta}(q\mo{}_{\alpha}))})p\z{}^{\alpha}q\z\\
=&\varepsilon(u(p\mo) \ol{\varepsilon(q\mo{}_{\alpha})})p\z{}^{\alpha}q\z+\varepsilon(p\mo{}_{\beta} \ol{\varepsilon (u^{\beta}(q\mo{}^{\ol \alpha}))})p\z{}_{\ol \alpha}q\z\\
=&\varepsilon(u(p\mo) )p\z{}\varepsilon(q\mo{})q\z+\varepsilon(p{}_{\beta}\mo)p_{\beta}\z \varepsilon(u^{\beta}(q\mo))q\z\\
=&f_{u}(p)q+p_{\alpha}(f_{u})^{\alpha}(q).
\end{align*}
where the first step uses  the left $L(P, H)$ coaction on $P$ is an algebra map. The 5th step uses the  triangularity of $(K,R)$. The sixth step uses the graphical computation that
$$\gbeg6{7}
    \got1{\cX}\gvac1\got1{P}\gvac1\got1{P}\gnl
    \gcl1\glcm\glcm\gnl
    \gbr\gibr\gcl1\gnl
    \gcl1\glm\gcl1\gcl1\gnl
    \gcn2113\gmp{\ol \varepsilon}\gcl1\gcl1\gnl
    \gvac1\grm\gcl1\gcl1\gnl
    \gvac1\gob1{L}\gvac1\gob1{P}\gob1{P}
   \gend=\gbeg6{7}
    \gvac1\got1{\cX}\got1{P}\gvac1\got1{P}\gnl
    \gvac1\gbr\glcm\gnl
    \glcm\glm\gcl1\gnl
    \gcl1\gcl1\gvac1\gmp{\varepsilon}\gcl1\gnl
    \gcl1\gcn2113\gcl1\gcl1\gnl
    \gcl1\gvac1\grm\gcl1\gnl
    \gob1{L}\gvac1\gob1{P}\gvac1\gob1{P}
   \gend$$
   where we also use the triangularity of $(K,R)$.
Now, we show the correspondence is bijective. Indeed, on the one hand,
\begin{align*}
u_{f_{u}}(p\ot q)=&f_{u}(p)\ot q=\varepsilon\circ u(p\z\ot p\on\tuno{})p\on\tdue{}\ot q\\
=&\varepsilon\circ u((p\ot q)\on)(p\ot q)\t=\varepsilon( u(p\ot q)\on) u(p\ot q)\t=u(p\ot q).
\end{align*}
On the other hand,
\begin{align*}
f_{u_{f}}(p)=\varepsilon\circ u_{f}(p\z\ot p\on\tuno{})p\on\tdue{}=f(p\z)p\on\tuno{}p\on\tdue{}=f(p).
\end{align*}
We can also check the correspondence preserves all the Lie-Rinehart algebra structure. We already checked it is $K$-linear. Also, we have
\[au_{f}(p\ot q)=af(p)\ot q=u_{af}(p\ot 1).\]
It preserves the anchor
\[\rho_{u_{f}}(a)=u_{f}(a\ot 1)=f(a)=\rho_{f}(a).\]
It preserves the Lie bracket
\[u_{[f,g]}(p\ot q)=[f,g](p)\ot q=(f(g(p))-g_{\alpha}(f^{\alpha}(p)))\ot q=(u_{f}u_{g}-u_{g_{\alpha}}u_{f^{\alpha}})(p\ot q).\]
\end{proof}

Define 
\[\textup{aut}_{B}(P):=\{f\in \textup{Der}^{H}(P)\,|\, f(b)=0,\forall b\in B\}.\]
We have
\begin{Cor}
Let $(K,R)$ be a triangular Hopf algebra and $B\subset P$ be a faithfully flat $H$-Galois extension in ${}_{K}\cM$ such that $B$ belongs to the braided center of $P$. Then we have the isomorphism of the short exact sequence of Lie-Rinehart algebra in ${}_{K}\cM$:
  \[\xymatrix{0\ar[r]&\cX_{\textup{ver}}(L(P,H))\ar[r]^-{\iota}\ar[d]_{}&\cX(L(P,H))\ar[r]^{\rho}\ar[d]_{} &\textup{Der}(B)\ar[d]_{=}\ar[r]&0\\
0\ar[r]&\textup{aut}_{B}(P)\ar[r]^-{\iota} &\textup{Der}^{H}(P)\ar[r]^{\rho}&\textup{Der}(B)\ar[r]&0},\]
\end{Cor}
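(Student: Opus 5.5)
The plan is to use the isomorphism $\cX(L(P,H))\cong \textup{Der}^{H}(P)$ of the preceding Theorem, given by $f\mapsto u_f$ with $u_f(p\ot q)=f(p)\ot q$ and inverse $u\mapsto f_u$. This map will be the middle vertical arrow of the diagram. Everything else follows by checking that this map restricts to the left-hand vertical arrow and commutes with the anchors.

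\textbf{Right square.} The anchor on $\cX(L(P,H))$ is $\rho_u(a)=u(a)$, where $a\in B$ is identified with $s(a)=a\ot 1\in L(P,H)$. The anchor on $\textup{Der}^{H}(P)$ is $\rho_f(b)=f(b)$. The Theorem already records $\rho_{u_f}(a)=u_f(a\ot 1)=f(a)\ot 1=\rho_f(a)$, so the right square commutes with the identity on $\textup{Der}(B)$. I also need both $\rho$'s to land in $\textup{Der}(B)$. For $f\in\textup{Der}^H(P)$, colinearity gives $f(b)\z\ot f(b)\on=f(b\z)\ot b\on=f(b)\ot 1$, so $f(B)\subseteq B$. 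The braided Leibniz rule then restricts to $B$.

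\textbf{Left square and kernels.} By definition $\cX_{\textup{ver}}=\ker\rho$ and $\textup{aut}_B(P)=\ker\rho$. Since $\rho_{u_f}=\rho_f$, the isomorphism $f\mapsto u_f$ sends $\textup{aut}_B(P)$ bijectively onto $\cX_{\textup{ver}}(L(P,H))$. The restriction is again $K$-linear, $B$-linear and bracket preserving, because these properties hold for the ambient isomorphism. Hence the left square commutes, with the inclusions $\iota$ as horizontal maps.

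\textbf{Exactness.} Exactness at the left and middle terms is immediate from the kernel descriptions, and $\rho$ is a Lie-Rinehart morphism (this is the displayed identity $\rho_{[u,v]}=\rho_u\rho_v-\rho_{v_\alpha}\rho_{u^\alpha}$, together with $\rho_{au}=a\rho_u$). Because the vertical maps are isomorphisms commuting with everything, exactness of the top row is equivalent to exactness of the bottom row. In particular surjectivity of one $\rho$ is equivalent to surjectivity of the other.

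The main obstacle is surjectivity of $\rho:\textup{Der}^H(P)\to\textup{Der}(B)$. This amounts to extending every braided derivation of $B$ to an $H$-equivariant braided derivation of $P$, which is a connection-type statement; the paper itself asserts surjectivity without proof. My first approach would be as follows. Given $D\in\textup{Der}(B)$, use faithful flatness (so that $P\ot_B P\cong P\ot H$ via $\can$), together with $B$ lying in the braided center, to try to define $\tilde D(p)$ through the translation map, for instance by $\tilde D(p)=D$ applied along $p\z\,\tuno{p\on}\ot\tdue{p\on}$. I would then check colinearity using (\ref{delre}) and the derivation property using (\ref{gammanabla}).

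If this construction fails in general, I would interpret the Corollary as follows. The diagram is an isomorphism of sequences that are exact in the sense already asserted for $\cX$ (equivalently, $\rho$ onto its image), and the vertical maps transport that exactness. This transport is the part the argument above proves rigorously.
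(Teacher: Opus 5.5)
Your argument is essentially the paper's. The paper's proof of the Corollary only checks that the isomorphism of the preceding Theorem restricts to the vertical parts, namely $f_u(b)=u(b\ot 1)=0$ for $u\in\cX_{\textup{ver}}(L(P,H))$ and $u_f(b\ot 1)=f(b)=0$ for $f\in\textup{aut}_B(P)$, which is your observation that $\rho_{u_f}=\rho_f$ forces the kernels of the anchors to correspond.

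Your caveat is also accurate. Neither the paper nor your argument proves surjectivity of $\rho$; the paper simply asserts exactness of $0\to\cX_{\textup{ver}}\to\cX\to\textup{Der}(B)\to 0$ without proof. Your tentative lift cannot supply it either, because $p\z\tuno{p\on}\ot_B\tdue{p\on}=1\ot_B p$ by (\ref{cantau}), so that element carries nothing for $D$ to act on, and a genuine lift would need additional input such as a connection.
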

\begin{proof}
By the above Lemma, given $u\in \cX_{\textup{ver}}(L(P,H))$, we have $f_{u}(b)=u(b\ot 1)=0$. Conversely, given $f\in \textup{aut}_{B}(P)$, we have $u_{f}(b\ot 1)=f(b)=0$.
\end{proof}

\section{Braided Jet Hopf algebroids}

As a generalization of jet Hopf algebroids associated to pair Hopf algebroid \cite{HM25}, we will introduce braided jet Hopf algebroids $\cL$  associated to braided Hopf algebroid whose base algebra $B$ and its opposite are braided center of the Hopf algebroid, i.e. $bX=X_{\alpha}b^{\alpha}$ and $\ol b\, X=X_{\alpha}\ol{b^{\alpha}} $ for all $X\in \cL$ and $b\in B$. We define 
\[\mu^{k+1}:=\cL\ker(\varepsilon)^{k+1}.\]
We call $\cJ^{k}(\cL):=\cL/\mu^{k+1}$ the $k$-th jet bundle of $\cL$. Classically, this is the $k$-th jet bundle of the Lie algebroid of a Lie groupoid \cite{LSX}.
We observe that $\cdots\mu^k\subseteq\cdots \subseteq\mu^2\subseteq \mu^1$. We call $\{\mu^{k}\}_{k\in \N}$ stabilize if there is a $n\in N$, such that $\mu^n=\mu^{n+1}$. In this case we denote $\mu^{\infty}=\mu^n$.

\begin{LemDef}\label{Lem jet Hopf algebroid}
Let $\cL$ be a  braided Hopf algebroid over $B$ and faithfully flat on both sides over $B$ and $\BB$. If $B$ and its opposite algebra belong to the braided center of $\cL$ and $\{\mu^{k}\}_{k\in \N}$ stabilizes, then $\cJ(\cL):=\cL/\mu^{\infty}$ is a braided Hopf algebroid with the structure induced from $\cL$. We call $\cJ(\cL)$ the jet Hopf algebroid of $\cL$.
\end{LemDef}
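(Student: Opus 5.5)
The plan is to apply Proposition~\ref{prop. quotient Hopf algebroids} with $\cI:=\mu^{\infty}=\mu^{n}$, where $\mu^n=\mu^{n+1}$. So it suffices to check that $\mu^{\infty}$ is a Hopf ideal, i.e.\ that it satisfies conditions (i)--(iv) of the definition.

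\textbf{Ideal and bimodule structure.} First, $\ker(\varepsilon)$ is a $K$-submodule (we are in $\B$), since $\varepsilon$ is a morphism. Hence $\mu^{k}=\cL\ker(\varepsilon)^{k}$ is a left ideal in $\B$. To see it is two-sided, use the left character property: $\varepsilon(X\,Y)=\varepsilon(X\varepsilon(Y))$. This gives $\ker(\varepsilon)\cL\subseteq \cL\ker(\varepsilon)+\ker(\varepsilon)$ via the decomposition $Y=(Y-s(\varepsilon(Y)))+s(\varepsilon(Y))$. Combining this with the braided centrality of $B$, which lets us move $s(\varepsilon(Y))$ past elements, an induction gives $\ker(\varepsilon)^k\cL\subseteq \mu^k$; this is exactly where $\mu^k$ (rather than a one-sided power) is needed. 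It then follows that $\mu^{k}$ is a two-sided ideal. The $B^e$-subbimodule property is automatic, because $s,t$ land in $\cL$ and the ideal absorbs them. Finally, $\varepsilon(\mu^{n})=0$ for $n\geq1$, since $\varepsilon(X j)=\varepsilon(X\,\overline{\varepsilon(j)})=0$ for $j\in\ker\varepsilon$.

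\textbf{Coideal condition.} The key step is the filtration estimate
\[
\Delta(\mu^{k})\subseteq \sum_{i+j=k}\mu^{i}\diamond_{B}\mu^{j},\qquad \mu^0:=\cL .
\]
For $j\in\ker(\varepsilon)$, counitality gives $\Delta(j)\in \ker\varepsilon\diamond_B\cL+\cL\diamond_B\ker\varepsilon$. One writes $j_{(1)}\otimes j_{(2)}=(j_{(1)}-s\varepsilon(j_{(1)}))\otimes j_{(2)}+1\otimes \varepsilon(j_{(1)})j_{(2)}$, where the second term equals $1\otimes j$. Multiplicativity of $\Delta$ then yields the estimate, with the braidings harmless because all the pieces are $K$-submodules. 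Faithful flatness is used here to identify $\mu^i\diamond_B\mu^j$ with a subobject of $\cL\diamond_B\cL$. Taking $k=2n$ and using stabilization, each summand has $i\ge n$ or $j\ge n$, which gives $\Delta(\mu^\infty)\subseteq \mu^\infty\diamond_B\cL+\cL\diamond_B\mu^\infty$.

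\textbf{Translation maps (the main obstacle).} Conditions (iii) and (iv) are where I expect the real difficulty. I will show $j_{+}\otimes j_{-}\in \ker\varepsilon\otimes_{\BB}\cL+\cL\otimes_{\BB}\ker\varepsilon$ for $j\in\ker\varepsilon$, using (\ref{transvanish}) and (\ref{transleftcolinear}), in the form $X_+\overline{\varepsilon(X_-)}=X$ together with $\varepsilon(X_+X_-)$-type identities. The products are then handled by the second formula of (\ref{transnabla}), $(XY)_+\otimes (XY)_-=X_+Y_{\alpha+}\otimes Y_{\alpha-}X^{\alpha}{}_-$, which again gives a filtered estimate $\sum_{i+j=k}\mu^i\otimes_{\BB}\mu^j$ provided left multiplication by $\cL$ is controlled through $X_+\otimes X_-$. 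The delicate point is that the second leg is multiplied on the right, so I will need the two-sidedness established above; braided centrality of $\BB$ is what lets me commute target elements through. The anti-translation map is treated symmetrically via (\ref{antitransvanish}) and (\ref{antitransnabla}). Stabilization then closes the argument exactly as in the coideal step, and Proposition~\ref{prop. quotient Hopf algebroids} finishes the proof.
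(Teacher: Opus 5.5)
Your plan follows the paper's proof. In both, $\mu^\infty$ is a two-sided ideal by braided centrality, and $\Delta$, $\lambda^{-1}$, $\mu^{-1}$ send $\mu=\ker\varepsilon$ into $\ker\varepsilon\ot\cL+\cL\ot\ker\varepsilon$. This propagates to powers by (anti)multiplicativity, and stabilization plus Proposition~\ref{prop. quotient Hopf algebroids} finish. The paper phrases the generator step as $\ker(\varepsilon\diamond_B\varepsilon)=\ker\varepsilon\diamond_B\cL+\cL\diamond_B\ker\varepsilon$ (and likewise with $\ol\varepsilon\ot_{\BB}\ol\varepsilon$ and $\varepsilon\ot_B\varepsilon$), checking vanishing by counitality, (\ref{transvanish}) and (\ref{antitransvanish}). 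An elementwise splitting is equivalent. Your use of $\mu^n=\mu^{2n}$ spells out what the paper compresses into ``since $\Delta$ is an algebra map''.

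\textbf{Error in the coideal step.} The splitting you wrote for $\Delta(j)$ is wrong as stated. In $\cL\diamond_B\cL$ the relation trades a \emph{target} element on the left leg for a source element on the right leg: $t(\ol b)X\ot Y=X_\alpha\ot b^\alpha Y$. Consequently:
\begin{itemize}
\item[(a)] $s(\varepsilon(j\on))\ot j\t\neq 1\ot j$ in general. For the pair algebroid $B\ot\BB$ with $B$ commutative, your splitting would force $\Delta(j)=1\ot j$.
\item[(b)] $X\mapsto X-s(\varepsilon(X))$ is not left $\BB$-linear, so your first summand does not even descend to $\diamond_B$.
\end{itemize}
Use $X\mapsto X-t(\ol{\varepsilon(X)})$ instead. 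It is left $\BB$-linear, and it lands in $\ker\varepsilon$ because $\varepsilon(t(\ol b))=b$ by left $B^e$-linearity of $\varepsilon$. Moreover $t(\ol{\varepsilon(j\on)})\ot j\t=1\ot\varepsilon(j\on)j\t=1\ot j$.

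\textbf{Translation maps.} The same device settles conditions (iii) and (iv) in one line, which your proposal leaves vague:
\begin{itemize}
\item[(a)] $j_+\ot j_-=j\ot 1+j_+\ot\bigl(j_--t(\ol{\varepsilon(j_-)})\bigr)$ by (\ref{transvanish});
\item[(b)] $j_{[+]}\ot j_{[-]}=j\ot 1+j_{[+]}\ot\bigl(j_{[-]}-s(\varepsilon(j_{[-]}))\bigr)$ by (\ref{antitransvanish}).
\end{itemize}
(\ref{transleftcolinear}) is not needed. Unlike the paper's $\ol\varepsilon\ot_{\BB}\ol\varepsilon$ argument, this does not use right linearity of $\varepsilon$.

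\textbf{Multiplying the pieces.} Only $\cL\times_B\cL$ is an algebra, not $\cL\diamond_B\cL$. Your filtration estimate needs the pieces $1\ot j$ and $\Delta(j)-1\ot j$ to lie in the Takeuchi product; then products can be computed on representatives. This holds when $B$ is braided central and the braiding is symmetric, which is the case the paper actually uses. The paper passes over this point as well.
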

\begin{proof}
Since $B$ is a braided center of $\cL$, $\mu^\infty$ is a two-sided ideal of $\cL$. Also, we have
\[\Delta(\mu)\subset \ker(\varepsilon\diamond_{B}\varepsilon)=\ker(\varepsilon)\diamond_{B}\cL+\cL\diamond_{B}\ker(\varepsilon).\]
Indeed, let $X\in \mu$, we have
\begin{align*}
(\varepsilon\diamond_{B}\varepsilon)(X\on\ot X\t)=\varepsilon(X\on)\ot \varepsilon(X\t)=1\ot \varepsilon(\varepsilon(X\on)\,X\t)=1\ot\varepsilon(X)=0.
\end{align*}
As a result, $\mu^\infty$ is a coideal since $\Delta$ is an algebra map.
To check $\mu^\infty$ is a Hopf ideal, we need the fact that the source and target of the base belong to the center of $\cL$.
We can see that $\varepsilon$ is right $B$-linear. Indeed,
\begin{align*}
\varepsilon(Xb)=\varepsilon(b^{\ol \alpha}\,X_{\ol \alpha})=b^{\ol \alpha}\varepsilon(X_{\ol \alpha})=\varepsilon(X)\,b.
\end{align*}
Hence, $\varepsilon\ot_{B}\varepsilon:\cL\ot_{B}\cL\to B$ is well defined. Also, we have
\[\ker(\varepsilon\ot_{B}\varepsilon)=\ker(\varepsilon)\ot_{B}\cL+\cL\ot_{B}\ker(\varepsilon).\]
Let $X\in \mu$, 
\[\varepsilon(X_{[+]})\ot_{B}\varepsilon(X_{[-]})=\varepsilon(X_{[+]}\varepsilon(X_{[-]}))\ot_{B}1=\varepsilon(X)\ot_{B} 1=0.\]
As a result, $\mu^\infty$ is an anti-Hopf ideal. By the same method, we have $\varepsilon(X\ol b)=\varepsilon(X)\ol b$, hence 
\begin{align*}
\ol {\varepsilon(X_{+})}\ot_{\BB}\ol {\varepsilon(X_{-})}=&\ol {\varepsilon(X_{+})}\,\ol {\varepsilon(X_{-})}\ot_{\BB}1=\ol{\varepsilon(X_{-}{}_{\alpha})\varepsilon(X_{+}{}^{\alpha})}\ot_{\BB}1\\
=&\ol{\varepsilon(X_{+})\varepsilon(X_{-})}\ot_{\BB}1
=\ol{\varepsilon(X_{+}\ol{\varepsilon(X_{-})})}\ot_{\BB}1=0
\end{align*}
\end{proof}

\begin{Rem}
It is similar to \cite{HM25} that we can also define braided pro-Hopf algebroids in $\B$, in which case all the pro-objects are in the category associated to the braided category. For example, a pro-$B$-coring is defined in terms of a pro-object in the $B$-bimodule category in $\B$. By the same method as in \cite[Thm. 4.10]{HM25}, without the assumption that $\{\mu^k\}_{k\in \N}$ stabilizes, we can still show the $\varprojlim_{i}\cJ^{i}(\cL)=\varprojlim_{i}\cL/\mu^{i}$ is a braided pro-Hopf algebroid.
\end{Rem}

It is known that the $k$-th order differential operators is the dual space of the $k$-th jet space (even in the noncommutative case \cite{HM25}).
Recall that there is a  filtration
\[A\subset \cU_{A}(\g)_{\leq 1}\subset \cU_{A}(\g)_{\leq 2}\cdots\subset \cU_{A}(\g).\]
Here we are going to show the $\cU_{B}(\g)_{\leq k}$ is the dual space of the $k$-th jet bundle of a braided Lie-algebroid. 

\begin{Thm}
Let $(K,R)$ be a triangular Hopf algebra, $\cL$ be a faithfully flat (on both sides over its base and its opposite algebra) Hopf algebroid over $B$ in ${}_{K}\cM$, $B$ and $\BB$ belong to the braided center of $\cL$. Then $\cU_{B}(\cX(\cL))_{\leq j}$ belongs to the dual space of $\cJ^{j}(\cL)$. In addition, if $\{\mu^{j}\}_{j\in \N}$ stabilizes then there is a skew pairing between $\cU_{B}(\cX(\cL))$ and $\cJ(\cL)$.
\end{Thm}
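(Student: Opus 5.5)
The plan is to realize $\cU_{B}(\cX(\cL))$ as an algebra of differential operators acting on $\cL$, and then to pair it with $\cL$ through the counit. Concretely, I would first define a left action of $T_{B}(\cX\ot B)$ on $\cL$ in ${}_{K}\cM$:
\begin{itemize}
\item $b\in B$ acts by left multiplication, $X\mapsto bX$;
\item $u\in\cX$ acts by $X\mapsto u(X)$;
\item a tensor word acts by composition.
\end{itemize}
I then check that the ideal $\cI$ acts trivially. The relation $uv-v_{\alpha}u^{\alpha}-[u,v]$ holds by the very definition of the bracket as braided commutator. The relation $ua-a_{\alpha}u^{\alpha}-\rho_{u}(a)$ holds by the braided Leibniz rule $u(aX)=u(a)X+a_{\alpha}u^{\alpha}(X)$, together with $\rho_{u}(a)=u(a)$. $K$-linearity follows from the computation $(k\on\bla u)(k\t\bla v)=k\bla(uv)$ made in the Lemma on $\cX$. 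This yields a well-defined map $\cU_{B}(\cX)\ot\cL\to\cL$, $D\ot X\mapsto D(X)$, and I set
\[\skp{D}{X}:=\varepsilon(D(X)).\]
Right invariance of each $u$ gives $D(X)=\varepsilon(D(X\on))X\t$. This is the identity $F_{f_F}=F$ from the Remark on $\Hom^{\cL}(\cL,\cL)$, extended to words since compositions of right-colinear maps are right-colinear.

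Next I show that words of length $\le j$ vanish on $\mu^{j+1}=\cL\ker(\varepsilon)^{j+1}$. Take $X Y_{1}\cdots Y_{j+1}$ with every $\varepsilon(Y_{i})=0$. Applying $u_{1}\cdots u_{i}$ with $i\le j$ and iterating the braided Leibniz rule produces a sum of terms. In each term every $u$ has hit a single factor, and the factors are permuted only by braidings, which are invertible $K$-actions and so preserve $\ker\varepsilon$. Since there are $j+1$ factors $Y$ and at most $j$ derivations, each term contains at least one factor $Y'$, the image of some $Y_{k}$ under braiding, that has not been hit.

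The key step is that $\varepsilon$ is multiplicative up to braiding when $B$ and $\BB$ lie in the braided center of $\cL$. From the character property $\varepsilon(XY)=\varepsilon(X\varepsilon(Y))$ and the right $B$-linearity $\varepsilon(Xb)=\varepsilon(X)b$ (shown in the proof of Lemma and Definition \ref{Lem jet Hopf algebroid}), I would deduce $\varepsilon(Z_{1}Z_{2}Z_{3})=\varepsilon(Z_{1}Z_{2})\varepsilon(Z_{3})$. Combining this with centrality lets me move the untouched $Y'$ to the far right up to braiding. Then $\varepsilon$ of the whole term factors through $\varepsilon(Y')=0$. Multiplication by elements of $B$ preserves the filtration, so the pairing descends to a well-defined $B$-linear map $\cU_{B}(\cX)_{\le j}\to\Hom(\cJ^{j}(\cL),B)$.

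Under stabilization, $\mu^{\infty}=\mu^{n}$ is killed by every filtered piece, so $\skp{-}{-}$ descends to $\cU_{B}(\cX)\ot\cJ(\cL)\to B$. I would then verify the skew pairing axioms (\ref{Belinear})–(\ref{coproduct}) on generators, extending by multiplicativity.
\begin{itemize}
\item The $B$-linearity and $\BB$-linearity axioms follow from $u|_{\BB}=0$, from centrality, and from $\varepsilon$ being a left character.
\item The product axiom (\ref{product}), i.e. $\skp{DD'}{X}=\skp{D}{\skp{D'}{X\on}X\t}$, comes directly from the right-invariance formula $D'(X)=\varepsilon(D'(X\on))X\t$.
\item The coproduct axiom (\ref{coproduct}) reduces on a generator $u$ to $\varepsilon(u(XY))=\varepsilon(u(X)Y)+\varepsilon(X_{\alpha}u^{\alpha}(Y))$. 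Via $\Delta(u)=u\ot1+1\ot u$ this is exactly the Leibniz rule, combined with the multiplicativity of $\varepsilon$ established above.
\end{itemize}

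I expect the main obstacle to be the coproduct axiom for products $DD'$. There one must show that the braided Leibniz expansion of $DD'(XY)$ matches $\Delta(D)\Delta(D')$ with its braided multiplication in $\cU\diamond_{B}\cU$, and must track all the crossings in (\ref{coproduct}). I would attempt it by induction on word length. Triangularity $\tau^{-1}=\tau$ is what lets the crossings in the two expansions be identified. This is the same mechanism that made $\Delta$ well defined on $\cU_{A}(\g)$ in the symmetric case.
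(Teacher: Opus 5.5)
Your proposal is correct and follows essentially the paper's route: you pair via $\varepsilon(u_1\circ\cdots\circ u_j(X))$, you get vanishing on $\mu^{j+1}$ from the braided Leibniz rule plus an untouched factor in $\ker\varepsilon$ (killed because $\varepsilon$ is multiplicative here), and you get the skew-pairing axioms from right invariance and the Leibniz rule. You are more careful than the paper in two places: you obtain well-definedness on $\cU_{B}(\cX)$ at once from the action of $T_{B}(\cX\ot B)$ on $\cL$, where the paper checks the relations degree by degree, and you note that the coproduct axiom must be propagated from generators to words by an induction using triangularity, whereas the paper only verifies it on a single generator $u$.
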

\begin{proof}
Let $j=0$. We know $\cJ^{0}(\cL)=\cL/\mu$ and $\cU_{B}(\cX)_{\leq 0}=B$. The dual pairing is given by $\skp{b}{[X]}=b\varepsilon(X)$. Let $j=1$, we know $\cJ^{1}(\cL)=\cL/\mu^2$ and $\cU_{B}(\cX)_{\leq 1}=(B\oplus \cX\ot B)/\cI$. The dual pairing is given by $\skp{u\ot a}{[X]}:=\varepsilon\circ u(aX)$. It is not hard to see this is braided left $\BB$-linear. Moreover, it factors through $\mu^2$. Indeed, let $i,j\in \mu$, we have $\varepsilon(u(ij))=\varepsilon(u(i)j)+\varepsilon(i_{\alpha}u^{\alpha}(j))=\varepsilon(u(i)\varepsilon(j))+\varepsilon(i_{\alpha}u^{\alpha}(j))=\varepsilon(u(j_{\alpha})i^{\alpha})=\varepsilon(u(j_{\alpha})\varepsilon(i^{\alpha}))=0$. Moreover, 
\[\skp{u\ot a-a_{\alpha}u^{\alpha}-\rho_{u}(a)}{[X]}=\varepsilon(u(aX)-a_{\alpha}u^{\alpha}(X)-u(a)X)=0.\]
Let $j=2$, $\cU_{B}(\g)_{\leq 2}=(B\oplus \cX\ot B\oplus(\cX\ot B)\ot_{B}(\cX\ot B))/\cI$. The dual pairing is given by $\skp{u v}{[X]}:=\varepsilon(u\circ v(X))$. By using the braided Leibniz rule, we can also see $\varepsilon(u\circ v(ijk))=0$ for all $i,j,k\in \mu$. 
Moreover, by the definition of the braided Lie bracket
\[\skp{u v-v_{\alpha} u^{\alpha}-[u,v]}{[X]}=0.\]
For higher order $j$, observe that given $u_{1}\cdots u_{j}\in \cX$ and $i_1,\cdots i_{j+1}\in \mu$. For 
\[u_{1}\circ,\cdots \circ u_{j}(i_1\cdots i_{j+1}),\]
by using the braided Leibniz rule, for each summand, there is at least one element never be acted by a right invariant vector field. As a result, the skew pairing given by
\[\skp{u_{1}\cdots u_{j}}{[X]}=\varepsilon(u_{1}\circ\cdots \circ u_{j}(X))\]
factors through $\mu^{j+1}$. 

 If $\{\mu^{j}\}_{j\in \N}$ stabilizes, we can define a skew pairing as above.
First, it is not hard to see
\[\skp{a ub}{[X_{\alpha}]}e^{\alpha}=\varepsilon(a u(b\ol{e}X))=a\skp{u}{b\ol{e}[X]}.\]
Also, we have
\[\skp{u\ol{b}}{[X]}=\varepsilon( u(b X))=\varepsilon( u(X_{\alpha}b^{\alpha}))=\skp{u}{[X^{\ol \alpha}]b_{\ol \alpha}},\]
where we use $B$ belongs to the braided center in the last step.
Moreover, we have
\[\skp{\ol{b}u}{[X]}=b\varepsilon( u( X))=\varepsilon( u_{\alpha}(X_{\beta}))b^{\alpha\beta}=\varepsilon( u_{\alpha}(\ol{b^{\alpha}}X))=\varepsilon( u_{\alpha}(X_{\beta}\ol{b^{\alpha\beta}}))=\skp{u^{\ol{\alpha}}}{[X^{\ol \beta}]\ol{b_{\ol {\alpha}\ol{\beta}}}},\]
where we use $B$ belongs to the braided center in the 4th step.
For the product  properties, we have on the one hand
\[\skp{u}{[X][Y]}=\varepsilon(u(XY))=\varepsilon(u(X)Y)+\varepsilon(X_{\alpha}u^{\alpha}(Y))\]
On the other hand
\begin{align*}
\skp{u\on}{[X^{\ol \alpha}]\ol{\skp{u\t{}_{\ol \alpha}}{[Y]}}}
=&\skp{u}{[X]\ol{\skp{1}{[Y]}}}+\skp{1}{[X^{\ol \alpha}]\ol{\skp{u{}_{\ol \alpha}}{[Y]}}}\\
=&\varepsilon(u(X)Y)+\varepsilon(X_{\alpha}u^{\alpha}(Y)),
\end{align*}
where we use the triangularity in the second step. For the  coproduct properties, we have on the one hand
\[\skp{uv}{[X]}=\varepsilon(u(v(X))).\]
On the other hand,
\begin{align*}
\skp{u}{\skp{v}{[X\on]}[X\t]}=&\varepsilon(u(\varepsilon(v(X\on))X\t))=\varepsilon(u(\varepsilon(v(X)\on)v(X)\t))\\
=&\varepsilon(u(v(X))).
\end{align*}
\end{proof}

\subsection{Examples on Hopf fibrations and homogeneous spaces}If $H$ is a Hopf algebra with trivial $K$-action, i.e. $k\la h=\varepsilon(k)h$, for all $k\in K$ and $h\in H$. Then the coaction is a morphism in ${}_{K}\cM$ means
\begin{align}\label{k equ action1}
(k\la p)\z\ot (k\la p)\on=(k \la p\z)\ot p\on.
\end{align}
There are two examples in  \cite{ALP24,ALP23, ALP25, HLL23, LS04}, $\theta$-deformation Hopf fibration $\cO(S_{\theta}^4)\subseteq \cO(S_{\theta}^7)$ is a $U(\mathfrak{t}^2)$-braided $\cO(SU(2))$-Galois extension (i.e. the corresponding braided category is the left $U(\mathfrak{t}^2)$-module). Also, $\cO(S_{\theta}^{2n})\subseteq \cO(SO_{\theta}(2n+1,\mathbb{R}))$ is a $K$-braided $\cO(SO_{\theta}(2n,\mathbb{R}))$-Galois extension with $K=U(\mathfrak{t}^n)^{op}\ot U(\mathfrak{t}^n)$. Moreover, for both of the examples, the base algebra $B$ is the braided center of the total space algebra $P$, as a result, $\varprojlim_{i}\cJ^{i}(L(\cO(SO_{\theta}(2n+1,\mathbb{R})), \cO(SO_{\theta}(2n,\mathbb{R}))))$ and  $\varprojlim_{i}\cJ^{i}(L(\cO(S_{\theta}^7), \cO(SU(2))))$ are (pro)-braided jet Hopf algebroids. Moreover, $\cU_{B}(\cX(L(P,H)))_{\leq j}$ belongs to the dual space of $\cJ^{j}(L(P,H))$.

\bigskip
\begin{flushleft}
\small
Department of Mathematics, Queen Mary University of London, London, UK\\
\textit{E-mail address:} \texttt{x.h.han@qmul.ac.uk}
\end{flushleft}
\end{document}